\author{Hongliang Lai and Lili Shen}
\thanks{The authors acknowledge the support of National Natural Science Foundation of China (11771310 and 11701396) and the Fundamental Research Funds for the Central Universities (YJ201644), as well as earlier, Natural Sciences and Engineering Research Council of Canada (Discovery Grant 501260 held by Professor Walter Tholen). This work was initiated while the first author held a Visiting Professorship and the second author held a Post-Doctoral Fellowship in the Department of Mathematics and Statistics at York University, with the kind support of Professor Walter Tholen.}
\address{School of Mathematics, Sichuan University\\
 Chengdu 610064, China
}
\title{Regularity vs. constructive complete (co)distributivity}
\keywords{Quantaloid, Girard quantaloid, Quantale, Girard quantale, Regular $\mathcal{Q}$-distributor, Complete distributivity, Kan adjunction}
\newtheorem{thm}{Theorem}
\newtheorem{lem}{Lemma}
\newtheorem{prop}{Proposition}
\newtheorem{cor}{Corollary}
\DeclareMathOperator{\dom}{dom}
\DeclareMathOperator{\cod}{cod}
\DeclareMathOperator{\ob}{ob}
\def\oto{{\bfig\morphism<180,0>[\mkern-4mu`\mkern-4mu;]\place(86,0)[\circ]\efig}}
\def\nra{\relbar\joinrel\joinrel\mapstochar\joinrel\joinrel\rightarrow}
\def\nla{\leftarrow\joinrel\joinrel\joinrel\mapstochar\joinrel\relbar}
\newcommand{\da}{\downarrow}
\newcommand{\ra}{\rightarrow}
\newcommand{\la}{\leftarrow}
\newcommand{\lra}{\longrightarrow}
\newcommand{\lda}{\swarrow}
\newcommand{\rda}{\searrow}
\newcommand{\Lra}{\Longrightarrow}
\newcommand{\rat}{\rightarrowtail}
\newcommand{\bv}{\bigvee}
\newcommand{\bw}{\bigwedge}
\newcommand{\dv}{\dashv}
\newcommand{\nat}{\natural}
\renewcommand{\phi}{\varphi}
\newcommand{\ga}{\gamma}
\newcommand{\Lam}{\Lambda}
\newcommand{\lam}{\lambda}
\newcommand{\CC}{\mathcal{C}}
\newcommand{\CK}{\mathcal{K}}
\newcommand{\CP}{\mathcal{P}}
\newcommand{\CQ}{\mathcal{Q}}
\newcommand{\sV}{{\sf V}}
\newcommand{\sY}{{\sf Y}}
\newcommand{\Fix}{{\sf Fix}}
\newcommand{\sIm}{{\sf Im}}
\newcommand{\bbA}{\mathbb{A}}
\newcommand{\bbB}{\mathbb{B}}
\newcommand{\bbT}{\mathbb{T}}
\newcommand{\BD}{{\bf D}}
\newcommand{\FK}{\mathfrak{K}}
\newcommand{\Arr}{{\bf Arr}}
\newcommand{\Cat}{{\bf Cat}}
\newcommand{\CCat}{{\bf CCat}}
\newcommand{\Chu}{{\bf Chu}}
\newcommand{\Dist}{{\bf Dist}}
\newcommand{\Inf}{{\bf Inf}}
\newcommand{\Info}{{\bf Info}}
\newcommand{\Ord}{{\bf Ord}}
\newcommand{\Rel}{{\bf Rel}}
\newcommand{\Sup}{{\bf Sup}}
\newcommand{\QCat}{\CQ\text{-}\Cat}
\newcommand{\QCCat}{\CQ\text{-}\CCat}
\newcommand{\QDist}{\CQ\text{-}\Dist}
\newcommand{\QChu}{\CQ\text{-}\Chu}
\newcommand{\QInf}{\CQ\text{-}\Inf}
\newcommand{\QInfo}{\CQ\text{-}\Info}
\newcommand{\QSup}{\CQ\text{-}\Sup}
\newcommand{\hga}{\widehat{\ga}}
\newcommand{\hphi}{\widehat{\phi}}
\newcommand{\tphi}{\widetilde{\phi}}
\newcommand{\txi}{\widetilde{\xi}}
\newcommand{\CPd}{\CP^{\dag}}
\newcommand{\sYd}{\sY^{\dag}}
\newcommand{\co}{{\rm co}}
\newcommand{\op}{{\rm op}}
\newcommand{\PA}{\CP\bbA}
\newcommand{\PB}{\CP\bbB}
\newcommand{\PdA}{\CPd\bbA}
\newcommand{\PdB}{\CPd\bbB}
\newcommand{\Kphi}{\CK\phi}
\newcommand{\Kpsi}{\CK\psi}
\newcommand{\DQ}{\BD(\CQ)}
\newcommand{\ArrQ}{\Arr(\CQ)}
\newcommand{\QCD}{(\QSup)_{\rm ccd}}
\newcommand{\of}{\overleftarrow{f}}
\newcommand{\ophi}{\overleftarrow{\phi}}
\newcommand{\opsi}{\overleftarrow{\psi}}
\newcommand{\IdmQ}{\DQ_{\rm idm}}
\newcommand{\RQ}{\DQ_{\rm reg}}
\newcommand{\RArrQDist}{\Arr(\QDist)_{\rm reg}}
\newcommand{\RQDist}{\BD(\QDist)_{\rm reg}}
\newcommand{\Rphi}{\FK\phi}
\newcommand{\KD}{\CK_{\rm d}}
\newcommand{\KR}{\CK_{\rm reg}}
\newcommand{\opccd}{\textsuperscript{op}(ccd)}
\newcommand{\DL}{\BD(L)}
\newcommand{\tc}[1]{\textcircled{\scriptsize #1}}
\numberwithin{equation}{section}
\begin{document}

\maketitle
\begin{abstract}
It is well known that a relation $\varphi$ between sets is regular if, and only if, $\mathcal{K}\varphi$ is completely distributive (cd), where $\mathcal{K}\varphi$ is the complete lattice consisting of fixed points of the Kan adjunction induced by $\varphi$. For a small quantaloid $\mathcal{Q}$, we investigate the $\mathcal{Q}$-enriched version of this classical result, i.e., the regularity of $\mathcal{Q}$-distributors versus the constructive complete distributivity (ccd) of $\mathcal{Q}$-categories, and prove that ``the dual of $\mathcal{K}\varphi$ is (ccd) $\implies$ $\varphi$ is regular $\implies$ $\mathcal{K}\varphi$ is (ccd)'' for any $\mathcal{Q}$-distributor $\varphi$. Although the converse implications do not hold in general, in the case that $\mathcal{Q}$ is a commutative integral quantale, we show that these three statements are equivalent for any $\varphi$ if, and only if, $\mathcal{Q}$ is a Girard quantale.
\end{abstract}

\section{Introduction}

The notion of \emph{regularity} was first introduced by von Neumann \cite{Neumann1936} for rings. It was later adapted to the context of semigroups by Green in his influential paper \cite{Green1951}, which initiated the study of \emph{regular semigroups} for decades \cite{Clifford1961,Howie1995}. More generally, one may consider \emph{regular arrows} in an arbitrary category $\CC$; that is, an arrow $f:X\lra Y$ in $\CC$ such that there exists an arrow $g:Y\lra X$ with $f\circ g\circ f=f$ (see \cite[Exercise I.5.7]{MacLane1998}).

\emph{Constructive complete distributivity} (or \emph{(ccd)} for short) was introduced by Fawcett and Wood in \cite{Fawcett1990}. Explicitly, a complete lattice $A$ is (ccd) if $\sup:\CP A\lra A$, the monotone map sending each down set of $A$ (here $\CP A$ denotes the set of down sets of $A$, ordered by inclusion) to its supremum, admits a left adjoint in $\Ord$. It is well known that (ccd) and (cd), i.e., complete distributivity, are equivalent notions if one assumes the axiom of choice \cite{Fawcett1990,Wood2003}. Moreover, as one may describe a (ccd) lattice precisely by the existence of a string of adjunctions
$$T\dv\sup\dv\sY:A\lra\CP A$$
in $\Ord$, where $\sY$ is the ({\bf 2}-enriched) Yoneda embedding that sends each $x\in A$ to the principal down set $\da x$, the notion of (ccd) can be extended to any (locally small) category; see \cite{Lucyshyn-Wright2012,Marmolejo2012,Rosebrugh1997} for discussions of such categories (called \emph{totally distributive} categories there).

The motivation of this paper originates from a famous theorem in the theory of semigroups that reveals the closed relationship between regular relations (i.e., regular arrows in the category $\Rel$ of sets and relations) and (cd) lattices. Explicitly, each relation $\phi:A\oto B$ between sets induces a \emph{Kan adjunction} \cite{Shen2013a}
$$\phi^*\dv\phi_*:{\bf 2}^A\lra{\bf 2}^B$$
between the powersets of $A$ and $B$, with
$$\phi^*V=\{x\in A\mid\exists y\in V:\ x\phi y\}\quad\text{and}\quad\phi_*U=\{y\in B\mid\forall x\in A:\ x\phi y\implies x\in U\}$$
for $V\subseteq B$, $U\subseteq A$, whose fixed points constitute a complete lattice
$$\Kphi:=\Fix(\phi_*\phi^*)=\{V\subseteq B\mid\phi_*\phi^*V=V\}.$$
The following theorem was first discovered by Zarecki{\u\i} in the case $B=A$ \cite{Zareckiui1963} (see also \cite{Bandelt1980,Schein1976,Yang1969} for related discussions), and was extended to arbitrary relations by Xu and Liu \cite{Xu2004a,Xu2004}:

\begin{thm} \label{regular_cd_classical}
A relation $\phi:A\oto B$ between sets is regular if, and only if, $\Kphi$ is (cd).
\end{thm}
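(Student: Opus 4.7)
My plan is to prove the two implications separately.

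For the ``only if'' direction, I would assume $\phi = \phi \circ \psi \circ \phi$ for some relation $\psi: B \oto A$ and translate this regularity equation into the language of Kan adjunctions. The auxiliary $\psi$ induces its own adjunction $\psi^* \dv \psi_*$, and the identity $\phi \psi \phi = \phi$ lifts to adjunction-level identities $\phi^* \psi^* \phi^* = \phi^*$ and $\phi_* \psi_* \phi_* = \phi_*$. These exhibit $\Kphi$ as a retract of the powerset $\mathbf{2}^A$ inside the 2-category of complete lattices and sup-preserving maps, with the retraction and the section both tied to $\psi_*, \psi^*$ so that each admits a further left adjoint. Because $\mathbf{2}^A$ is trivially (ccd), and retracts of this sort transfer (ccd) to the retract, we conclude $\Kphi$ is (ccd), hence (cd) by the axiom of choice.

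For the ``if'' direction, assume $\Kphi$ is (cd), and hence (ccd) by choice. There then exists a left adjoint $T \dv \sup: \CP\Kphi \lra \Kphi$ encoding the totally-below relation on $\Kphi$. For each $y \in B$, let $V_y := \phi_* \phi^* \{y\}$ be the principal fixed point generated by $y$; then $TV_y$ is a down-set of $\Kphi$ whose join is $V_y$ and which encodes the ``atomic'' decomposition of $V_y$. I would define $\psi: B \oto A$ by a pointwise condition relating this decomposition to the Kan adjunction, of the form ``$y \psi x$ iff some $W \in TV_y$ satisfies $x \in \phi^* W$'' (or a dual formulation using $\phi_*$), and verify $\phi \circ \psi \circ \phi = \phi$ by chasing elements through the adjunction and exploiting the unit/counit laws of $T \dv \sup \dv \sY$.

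The main obstacle is the ``if'' direction: one must choose the formula for $\psi$ so that it is tuned precisely to the totally-below relation on $\Kphi$, then carry out the verification of $\phi \circ \psi \circ \phi = \phi$. In the $\CQ$-enriched setting developed in the rest of the paper, the analogous equality becomes a distributor identity whose verification requires the specific form of regularity postulated, and the subtle nature of this step already foreshadows why the naive converses of the $\CQ$-enriched implications fail in general, motivating the passage to Girard quantales when both directions are desired.
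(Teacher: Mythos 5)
Your ``only if'' half is viable and is essentially the paper's own mechanism. The identities $\phi^*\psi^*\phi^*=\phi^*$ and $\phi_*\psi_*\phi_*=\phi_*$ are correct, and they do exhibit $\Kphi$ as a retract of $\mathbf{2}^A$ in $\Sup$: take the section $V\mapsto\phi^*V$ (which preserves the suprema of $\Kphi$ because $\phi^*\phi_*\phi^*=\phi^*$) and the retraction $U\mapsto\phi_*\phi^*\psi^*U$; their composite is the identity on $\Kphi$ precisely because $\phi\circ\psi\circ\phi=\phi$. Since powersets are (ccd) and (ccd) passes to retracts in $\Sup$, $\Kphi$ is (ccd), hence (cd) under choice. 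Note, however, that what is needed is that both maps \emph{are} left adjoints (the inclusion $\Kphi\hookrightarrow\mathbf{2}^B$ is not sup-preserving, which is exactly why $\psi$ must be built into the section/retraction), not that they ``admit a further left adjoint''; the paper runs the same argument with the canonical witness $\ophi$ and the idempotent $\phi\circ\ophi$, via Propositions \ref{retract_ccd}, \ref{idempotent_ccd} and Theorem \ref{Kphi_ccd}.

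The ``if'' half has a genuine gap: the witness you propose fails, and the verification you postpone is exactly where the theorem lives. With $V_y=\phi_*\phi^*\{y\}$ and $\sup TV_y=V_y$, your relation ``$y\,\psi\,x$ iff $x\in\phi^*W$ for some $W\in TV_y$'' always contains $\phi^{\mathrm{op}}$ (from $x\phi y$ one gets $\phi^*\{y\}\subseteq\bigcup_{W\in TV_y}\phi^*W$), and $\phi\circ\phi^{\mathrm{op}}\circ\phi\subseteq\phi$ is false in general. Concretely, let $A=\{a_1,a_2\}$, $B=\{b_1,b_2\}$ and $\phi=\{(a_1,b_1),(a_1,b_2),(a_2,b_2)\}$: then $\Kphi=\{\varnothing,\{b_1\},B\}$ is a chain, hence (cd), and $\phi$ is regular, yet $B\in TV_{b_2}$ forces $b_2\,\psi\,a_1$, so $\psi=\phi^{\mathrm{op}}$ and $(a_2,b_1)\in\phi\circ\psi\circ\phi\setminus\phi$; thus $\phi\circ\psi\circ\phi=\phi$ fails for your $\psi$ even though the theorem holds. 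Any correct witness must lie below $\ophi=(\phi\rda\phi)\lda\phi$, which here does not relate $b_2$ to $a_1$, so the formula has to be genuinely more restrictive, and producing and verifying it is the substantial content of the classical Zarecki{\u\i}/Xu--Liu argument. Note also that the paper does not prove this direction by such an element chase: it deduces it from the enriched Theorem \ref{Kphi_coccd_phi_regular} (``$\Kphi$ {\opccd} implies $\phi$ regular'') together with the coincidence of (ccd) and {\opccd} over the Girard quantale $\mathbf{2}$ (Proposition \ref{Girard_ccd=coccd}); so your sketch neither matches the paper's route nor, as it stands, supplies a complete alternative.
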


Since \emph{distributors} \cite{Benabou1973,Benabou2000,Borceux1994a,Borceux1994b} (also known as \emph{profunctors} or \emph{bimodules}) generalize relations as functors generalize maps, it is natural to consider the possibility of establishing Theorem \ref{regular_cd_classical} in the framework of category theory, with distributors and (ccd) in lieu of relations and (cd), respectively. The aim of this paper is to investigate this problem in a special case, i.e., for distributors between categories enriched in a small quantaloid $\CQ$ \cite{Heymans2010,Rosenthal1996,Shen2014,Stubbe2005,Stubbe2006}, which is interesting enough to reveal that it is a coincidence for Theorem \ref{regular_cd_classical} to have such an elegant form --- its validity relies on the fact that (ccd) and {\opccd}, i.e., constructive complete \emph{co}distributivity, are equivalent notions when $\CQ={\bf 2}$!

For a small quantaloid $\CQ$, a $\CQ$-distributor $\phi:\bbA\oto\bbB$ between $\CQ$-categories may be thought of as a multi-typed and multi-valued relation that respects $\CQ$-categorical structures in its domain and codomain, and \emph{regular} $\CQ$-distributors are precisely regular arrows in the category $\QDist$ of $\CQ$-categories and $\CQ$-distributors. Each $\phi:\bbA\oto\bbB$ induces a Kan adjunction \cite{Shen2013a}
$$\phi^*\dv\phi_*:\PA\lra\PB$$
between the presheaf $\CQ$-categories of $\bbA$ and $\bbB$, whose fixed points constitute a complete $\CQ$-category $\Kphi$. Moreover, A $\CQ$-category $\bbA$ is \emph{(ccd)} if one has a string of adjoint $\CQ$-functors
$$T\dv\sup\dv\sY:\bbA\lra\PA,$$
where $\sY$ is the ($\CQ$-enriched) Yoneda embedding. Dually, $\bbA$ is \emph{{\opccd}} if $\bbA^{\op}$ is a (ccd) $\CQ^{\op}$-category.

With necessary preparations in Sections \ref{Regular_quantaloid} and \ref{QCat_Kan}, we prove in Section \ref{Regularity_ccd} that $\Kphi$ is (ccd) whenever $\phi$ is a regular $\CQ$-distributor (see Theorem \ref{Kphi_ccd}). Furthermore, in Section \ref{RQDist_QCD} we show that Theorem \ref{Kphi_ccd} gives rise to a (dual) equivalence of categories (see Theorem \ref{KR_equiv})
\begin{equation} \label{RQDist_QCD_intro}
\RQDist^{\op}\simeq\QCD.
\end{equation}
Here $\RQDist$ is the full subcategory of $\BD(\QDist)$, the category of diagonals in $\QDist$ (also known as the Freyd completion of $\QDist$, see Grandis \cite{Grandis2000,Grandis2002}), with objects restricting to regular $\CQ$-distributors; while $\QCD$ is the category of (ccd) $\CQ$-categories and $\sup$-preserving $\CQ$-functors. The equivalence \eqref{RQDist_QCD_intro} extends Stubbe's result that the split-idempotent completion of $\QDist$ is dually equivalent to $\QCD$ \cite{Stubbe2007}, whose prototype comes from the work of Rosebrugh and Wood \cite{Rosebrugh1994} when $\CQ={\bf 2}$.

Unfortunately, the converse statement of Theorem \ref{Kphi_ccd} is not true as the counterexample given in \ref{Kphi_ccd_phi_not_regular} shows. In fact, the regularity of $\phi$ necessarily follows if one assumes $\Kphi$ to be {\opccd}! This observation is stated in Theorem \ref{Kphi_coccd_phi_regular}, whose proof is the most challenging one in this paper. Hence, the chain of logic is essentially as follows:
\begin{equation} \label{logic_chain}
\Kphi\ \text{is {\opccd}}\ \Lra\ \phi\ \text{is regular}\ \Lra\ \Kphi\ \text{is (ccd)}.
\end{equation}

Although both the implications in \eqref{logic_chain} are proper when quantified over $\phi$ and $\CQ$ as one could easily see from Examples \ref{Kphi_ccd_phi_not_regular} and \ref{phi_regular_Kphi_not_opccd}, in Section \ref{Girard_quantaloids_reg_ccd_opccd} it is shown that (ccd) and {\opccd} are equivalent notions when $\CQ$ is a \emph{Girard} quantaloid \cite{Rosenthal1992}, which leads to
\begin{equation} \label{logic_chain_Girard}
\Kphi\ \text{is {\opccd}}\iff\phi\ \text{is regular}\iff\Kphi\ \text{is (ccd)}
\end{equation}
in this case (see Theorem \ref{regular_ccd_Girard}). In particular, since {\bf 2} is a Girard quantale (i.e., a one-object Girard quantaloid), Theorem \ref{regular_cd_classical} becomes a special case of Theorem \ref{regular_ccd_Girard}; in this sense we indeed give a new proof for the following version of Theorem \ref{regular_cd_classical} which does not require the axiom of choice:

\begin{thm} \label{regular_cd_classical_ccd}
A relation $\phi:A\oto B$ between sets is regular if, and only if, $\Kphi$ is (ccd).
\end{thm}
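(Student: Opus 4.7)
The plan is to obtain Theorem \ref{regular_cd_classical_ccd} as an immediate specialization of Theorem \ref{regular_ccd_Girard} to $\CQ = \mathbf{2}$. Relations between sets coincide with $\mathbf{2}$-distributors between (necessarily discrete) $\mathbf{2}$-categories, the notion of regular arrow in $\Rel$ matches that of regular $\mathbf{2}$-distributor, and the fixed-point lattice $\Kphi$ as constructed from the Kan adjunction $\phi^*\dv\phi_*:\PA\lra\PB$ reduces, when $\CQ=\mathbf{2}$, to the classical $\Fix(\phi_*\phi^*)\subseteq\mathbf{2}^A$ described in the introduction. So the whole argument is to import the conclusion of Theorem \ref{regular_ccd_Girard} and then discard one of its three mutually equivalent clauses.

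First I would check that $\mathbf{2}=\{0<1\}$ is a Girard quantale. With tensor $\otimes=\wedge$ and unit $1$, the bottom element $0$ is a cyclic dualizing element: the map $a\mapsto(a\Rightarrow 0)$ is an order-reversing involution on $\{0,1\}$, which is precisely the defining property of a Girard quantale as recalled in Section \ref{Girard_quantaloids_reg_ccd_opccd}. Thus $\mathbf{2}$ is a one-object Girard quantaloid, as the introduction already remarks in the paragraph preceding the statement.

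Given this, Theorem \ref{regular_ccd_Girard} applied to $\CQ=\mathbf{2}$ yields, for every relation $\phi:A\oto B$, the three-way equivalence
\[
\Kphi\ \text{is {\opccd}}\iff\phi\ \text{is regular}\iff\Kphi\ \text{is (ccd)}.
\]
Dropping the left-hand term leaves exactly the biconditional of Theorem \ref{regular_cd_classical_ccd}. Moreover, since each of the three ingredients behind Theorem \ref{regular_ccd_Girard}---Theorem \ref{Kphi_ccd}, Theorem \ref{Kphi_coccd_phi_regular}, and the equivalence of (ccd) with {\opccd} over a Girard quantaloid---is obtained constructively, no appeal to the axiom of choice enters the argument.

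The only genuine obstacle, really a bookkeeping one, is making sure that the $\CQ$-enriched definitions collapse correctly on the $\mathbf{2}$-side: one must confirm that the presheaf $\CQ$-category $\PA$ specializes to the powerset $\mathbf{2}^A$, that the Kan adjunction specializes to $\phi^*\dv\phi_*$ as displayed in the introduction, and that (ccd) in the sense of strings of adjunctions in $\QCat$ specializes to the usual notion of (ccd) in $\Ord$. These identifications are essentially unpacked in the introductory discussion, so once Theorem \ref{regular_ccd_Girard} is in hand the proof is a one-line corollary.
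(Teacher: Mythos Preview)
Your proposal is correct and matches the paper's own argument exactly: the paper derives Theorem~\ref{regular_cd_classical_ccd} as the special case $\CQ=\mathbf{2}$ of Theorem~\ref{regular_ccd_Girard}, having observed that $\mathbf{2}$ is a Girard quantale. One small slip: you wrote $\Fix(\phi_*\phi^*)\subseteq\mathbf{2}^A$, but since $\phi_*\phi^*:\PB\to\PB$ it should be $\mathbf{2}^B$; this does not affect the argument.
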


Finally, we wish to find the minimal requirement for $\CQ$ to establish the equivalences \eqref{logic_chain_Girard}. Some partial results are obtained in Section \ref{Q_quantale}, where $\CQ$ is assumed to be a commutative integral quantale, and we show that the equivalences \eqref{logic_chain_Girard} hold for any $\phi$ enriched in such $\CQ$ if, and only if, $\CQ$ is a Girard quantale (see Theorem \ref{ccd_coccd_Girard}).

\section{Regular arrows in a quantaloid} \label{Regular_quantaloid}

A \emph{quantaloid} \cite{Rosenthal1996} $\CQ$ is a category enriched in the symmetric monoidal closed category $\Sup$ of complete lattices and join-preserving maps. Explicitly, $\CQ$ is a locally ordered 2-category whose hom-sets are complete lattices such that the composition $\circ$ of $\CQ$-arrows preserves joins on both sides, with the induced adjoints
$$-\circ f\dv -\lda f:\ \CQ(X,Z)\lra\CQ(Y,Z)\quad\text{and}\quad g\circ -\dv g\rda -:\ \CQ(X,Z)\lra\CQ(X,Y)$$
satisfying
$$g\circ f\leq h\iff g\leq h\lda f\iff f\leq g\rda h$$
for all $\CQ$-arrows $f:X\lra Y$, $g:Y\lra Z$, $h:X\lra Z$.

A $\CQ$-arrow $f:X\lra Y$ is \emph{regular} \cite{MacLane1998} if there exists a $\CQ$-arrow $g:Y\lra X$ such that $f\circ g\circ f=f$. Note that for any $\CQ$-arrow $f:X\lra Y$,
$$\of:=(f\rda f)\lda f:Y\lra X$$
is the greatest $\CQ$-arrow $g:Y\lra X$ with $f\circ g\circ f\leq f$. This observation makes it easy to verify the following characterization of regular $\CQ$-arrows:

\begin{prop} \label{regular_condition}
For any $\CQ$-arrow $f$, the following statements are equivalent:
\begin{enumerate}[label={\rm(\roman*)}]
\item $f$ is regular.
\item $f=f\circ\of\circ f$.
\item $f\leq f\circ\of\circ f$.
\end{enumerate}
\end{prop}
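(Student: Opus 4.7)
The plan is to exploit the universal description of $\of$ already recorded in the excerpt: iterating the two adjunctions provided by a quantaloid yields
$$g\leq(f\rda f)\lda f\iff g\circ f\leq f\rda f\iff f\circ g\circ f\leq f,$$
so that $\of$ is the greatest $\CQ$-arrow $g\colon Y\lra X$ satisfying $f\circ g\circ f\leq f$. Applying this to $g=\of$ itself gives the always-valid inequality $f\circ\of\circ f\leq f$, which will drive both nontrivial implications.

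For (i) $\Rightarrow$ (ii), I would fix any witness $g\colon Y\lra X$ with $f\circ g\circ f=f$. Since $g$ certainly satisfies $f\circ g\circ f\leq f$, the universal property forces $g\leq\of$, and monotonicity of $\circ$ on both sides then gives $f=f\circ g\circ f\leq f\circ\of\circ f$; combined with $f\circ\of\circ f\leq f$, this collapses to the equality asserted in (ii). The implication (ii) $\Rightarrow$ (iii) is trivial, and for (iii) $\Rightarrow$ (i) the hypothesis $f\leq f\circ\of\circ f$ together with $f\circ\of\circ f\leq f$ again forces $f=f\circ\of\circ f$, exhibiting $\of$ itself as the required witness to regularity.

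I do not anticipate any genuine obstacle; the content is essentially a one-line exploitation of the defining adjoints of a quantaloid, the only point worth spelling out being that composition is monotone in each argument (which is again immediate from the adjunctions above). The elegance of the statement lies in the fact that, once a regular $f$ is known to admit \emph{some} witness, its \emph{canonical} witness $\of$ automatically works, which is exactly what will be needed later when regularity is studied pointwise along $\CQ$-distributors.
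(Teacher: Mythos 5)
Your argument is correct and matches the paper's intent exactly: the paper states only that the universal property of $\of$ (being the greatest $g$ with $f\circ g\circ f\leq f$) makes the verification easy, and your chain $f=f\circ g\circ f\leq f\circ\of\circ f\leq f$ for any witness $g$, plus the trivial (ii)$\Rightarrow$(iii) and the squeeze for (iii)$\Rightarrow$(i), is precisely that verification. No gaps.
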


\begin{exmp}
\begin{enumerate}[label={\rm(\arabic*)}]
\item Any $\CQ$-arrow $f:X\lra Y$ that admits a left or right adjoint in the 2-category $\CQ$ is regular.
\item The category $\Rel$ is in fact a quantaloid under the inclusion order of relations. For any relation $\phi:X\oto Y$, it is straightforward to check that $\ophi:Y\oto X$ is precisely the relation $\phi^{\leq}$ defined by Ern{\'e} in \cite[Section 4]{Erne1993}; that is, the greatest relation $\psi:Y\oto X$ with $\phi\circ\psi\circ\phi\subseteq\phi$.
\item $\Sup$ is itself a quantaloid, in which a join-preserving map $f:X\lra Y$ between (ccd) lattices is regular if, and only if, the image of $f$, $\sIm f=\{f(x)\mid x\in X\}$, is a (ccd) lattice (see \cite[Theorem 3.1]{Hoehle2011}).
\end{enumerate}
\end{exmp}

Each quantaloid $\CQ$ induces an arrow category $\ArrQ$ of $\CQ$ with $\CQ$-arrows as objects and pairs of $\CQ$-arrows ($u:X_1\lra X_2,\ v:Y_1\lra Y_2$) satisfying
$$g\circ u=v\circ f$$
$$\bfig
\square<500,400>[X_1`X_2`Y_1`Y_2;u`f`g`v]
\efig$$
as arrows from $f:X_1\lra Y_1$ to $g:X_2\lra Y_2$. $\ArrQ$ is again a quantaloid with the componentwise local order inherited from $\CQ$.

For arrows $(u,v),(u',v'):f\lra g$ in $\ArrQ$, denote by $(u,v)\sim(u',v')$ if the commutative squares
$$\bfig
\square[X_1`X_2`Y_1`Y_2;u`f`g`v]
\square(1000,0)[X_1`X_2`Y_1`Y_2;u'`f`g`v']
\morphism(0,500)/-->/<500,-500>[X_1`Y_2;]
\morphism(1000,500)/-->/<500,-500>[X_1`Y_2;]
\efig$$
have the same \emph{diagonal}; that is, if
$$g\circ u=v\circ f=g\circ u'=v'\circ f.$$
``$\sim$'' gives rise to a congruence on $\ArrQ$, and the induced quotient quantaloid, denoted by $\DQ$, is called the quantaloid of \emph{diagonals} in $\CQ$ (see \cite[Example 2.14]{Stubbe2014}). The underlying category of $\DQ$ is also known as the Freyd completion of $\CQ$, which makes sense for an arbitrary category instead of a quantaloid (see \cite{Grandis2000,Grandis2002}).

By restricting the objects of $\DQ$ on regular $\CQ$-arrows one has a full subquantaloid $\RQ$ of $\DQ$. Again, $\RQ$ has a full subquantaloid $\IdmQ$ \cite{Stubbe2005a} whose objects are idempotent $\CQ$-arrows, which is known as the \emph{split-idempotent completion} of $\CQ$.\footnote{Arrows from an idempotent $e:X\lra X$ to an idempotent $f:Y\lra Y$ in $\IdmQ$ can be equivalently described as $\CQ$-arrows $u:X\lra Y$ with $u\circ e=u=f\circ u$.}

\begin{prop} \label{RQ_equiv_IdmQ}
$\RQ$ is equivalent to its full subquantaloid $\IdmQ$.
\end{prop}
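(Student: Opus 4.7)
The plan is to verify that the full inclusion $\iota\colon\IdmQ\hookrightarrow\RQ$ is an equivalence of quantaloids. Since $\IdmQ$ is a full subquantaloid of $\RQ$, $\iota$ is already an isomorphism on each hom-lattice, so the only non-trivial point is essential surjectivity: I must show that every regular $\CQ$-arrow $f$ is isomorphic in $\DQ$ to some idempotent.

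The natural candidate idempotent is $f\circ\of$. Indeed, whenever $f\colon X\lra Y$ is regular, Proposition \ref{regular_condition} gives
$$(f\circ\of)\circ(f\circ\of)=(f\circ\of\circ f)\circ\of=f\circ\of,$$
so $f\circ\of\colon Y\lra Y$ is an object of $\IdmQ$ (and similarly $\of\circ f$ is idempotent, though we will not need it). The main step is then to exhibit mutually inverse diagonals
$$\alpha=[(f,\id_Y)]\colon f\lra f\circ\of\quad\text{and}\quad\beta=[(\of,\id_Y)]\colon f\circ\of\lra f$$
in $\DQ$; commutativity of the underlying squares in $\ArrQ$ is immediate from $f=f\circ\of\circ f$.

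To check $\beta\circ\alpha\sim\id_f$, one computes the composite in $\ArrQ$ as $(\of\circ f,\id_Y)$, whose diagonal is $f\circ\of\circ f=f$, matching the diagonal $f$ of $\id_f=(\id_X,\id_Y)$. Dually, $\alpha\circ\beta=(f\circ\of,\id_Y)$ has diagonal $f\circ\of\circ f\circ\of=f\circ\of$, which is also the diagonal of $\id_{f\circ\of}$. Hence $\alpha$ and $\beta$ are mutually inverse in $\DQ$, and since both endpoints lie in $\RQ$, this yields $f\cong f\circ\of$ in $\RQ$, so $\iota$ is essentially surjective. There is no genuine obstacle; the only care needed is to correctly apply the diagonal equivalence defining $\DQ$ and to pick an idempotent whose relevant computations collapse via the regularity identity $f=f\circ\of\circ f$.
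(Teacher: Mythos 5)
Your proof is correct and follows essentially the same route as the paper: exhibit an explicit isomorphism in $\RQ$ between a regular arrow $f$ and an idempotent built from $\of$, verified by comparing diagonals of the composite squares. The only (immaterial) difference is that you use the codomain-side idempotent $f\circ\of$ with the pairs $(f,\id_Y)$ and $(\of,\id_Y)$, whereas the paper uses the mirror choice $\of\circ f$ with $(\of\circ f,\of)$ and $(1_X,f)$.
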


\begin{proof}
For any regular $\CQ$-arrow $f:X\lra Y$, it is clear that $\of\circ f:X\lra X$ is an idempotent $\CQ$-arrow. It suffices to verify that $f$ and $\of\circ f$ are isomorphic objects in $\RQ$, which is easy since
$$(\of\circ f,\of):f\lra\of\circ f\quad\text{and}\quad(1_X,f):\of\circ f\lra f$$
satisfy
$$(\of\circ f,\of)\circ(1_X,f)\sim(1_X,1_Y):f\lra f$$
and
$$(1_X,f)\circ(\of\circ f,\of)\sim(1_X,1_X):\of\circ f\lra\of\circ f,$$
establishing an isomorphism in $\RQ$.
\end{proof}

\section{$\CQ$-categories and Kan adjunctions} \label{QCat_Kan}

From now on, let $\CQ$ be a \emph{small} quantaloid. We shall use the same notations of $\CQ$-categories, $\CQ$-distributors and $\CQ$-functors as fixed in \cite[Subsection 3.1]{Shen2016a}, and here is a brief summary of the basic notions.

A (small) \emph{$\CQ$-category} $\bbA$ is determined by a set $\bbA_0$ of objects, a \emph{type} map $t:\bbA_0\lra\ob\CQ$, and hom-arrows $\bbA(x,y)\in\CQ(tx,ty)$ with $1_{tx}\leq\bbA(x,x)$ and $\bbA(y,z)\circ\bbA(x,y)\leq\bbA(x,z)$ for all $x,y,z\in\bbA_0$.

A \emph{$\CQ$-distributor} $\phi:\bbA\oto\bbB$ between $\CQ$-categories is a map that assigns to each pair $(x,y)\in\bbA_0\times\bbB_0$ a $\CQ$-arrow $\phi(x,y)\in\CQ(tx,ty)$, such that $\bbB(y,y')\circ\phi(x,y)\circ\bbA(x',x)\leq\phi(x',y')$ for all $x,x'\in\bbA_0$, $y,y'\in\bbB_0$. $\CQ$-categories and $\CQ$-distributors constitute a quantaloid $\QDist$ with the pointwise local order inherited from $\CQ$.

A \emph{$\CQ$-functor} (resp. \emph{fully faithful $\CQ$-functor}) $F:\bbA\lra\bbB$ between $\CQ$-categories is a map $F:\bbA_0\lra\bbB_0$ with $tx=t(Fx)$ and $\bbA(x,y)\leq\bbB(Fx,Fy)$ (resp. $\bbA(x,y)=\bbB(Fx,Fy)$) for all $x,y\in\bbA_0$. $\CQ$-categories and $\CQ$-functors are organized into a 2-category $\QCat$ with 2-cells given by the pointwise underlying order
\begin{align*}
F\leq G:\bbA\lra\bbB&\iff\forall x\in\bbA_0:\ Fx\leq Gx\ \text{in}\ \bbB_0\\
&\iff\forall x\in\bbA_0:\ 1_{tx}\leq\bbB(Fx,Gx).
\end{align*}
Each $\CQ$-functor $F:\bbA\lra\bbB$ induces an adjunction $F_{\nat}\dv F^{\nat}$ in $\QDist$ with
$$F_{\nat}=\bbB(F-,-):\bbA\oto\bbB,\quad\text{and}\quad F^{\nat}=\bbB(-,F-):\bbB\oto\bbA,$$
called respectively the \emph{graph} and \emph{cograph} of $F$, which are both 2-functorial as
$$(-)_{\nat}:\QCat\lra(\QDist)^{\co},\quad(-)^{\nat}:\QCat\lra(\QDist)^{\op},$$
where ``$\co$'' refers to the dualization of 2-cells.

A \emph{presheaf} with type $X$ on a $\CQ$-category $\bbA$ is a $\CQ$-distributor $\mu:\bbA\oto\star_X$, where $\star_X$ is the $\CQ$-category with only one object of type $X$. Presheaves on $\bbA$ constitute a $\CQ$-category $\PA$ with $\PA(\mu,\mu')=\mu'\lda\mu$ for all $\mu,\mu'\in\PA$. Dually, the $\CQ$-category $\PdA$ of \emph{copresheaves} on $\bbA$ consists of $\CQ$-distributors $\lam:\star_X\oto\bbA$ as objects with type $X$ and $\PdA(\lam,\lam')=\lam'\rda\lam$ for all $\lam,\lam'\in\PdA$.

\begin{rem} \label{PdA_QDist_order}
For any $\CQ$-category $\bbA$, it follows from the definition that the underlying order on $\PA$ coincides with the local order in $\QDist$, while the underlying order on $\PdA$ is the \emph{reverse} local order in $\QDist$, i.e.,
$$\lam\leq\lam'\ \text{in}\ \PdA\iff\lam'\leq\lam\ \text{in}\ \QDist.$$
In order to get rid of the confusion about the symbol $\leq$, we make the convention that the symbol $\leq$ between $\CQ$-distributors always refers to the local order in
$\QDist$ unless otherwise specified.
\end{rem}


A $\CQ$-category $\bbA$ is \emph{complete} if each $\mu\in\PA$ has a \emph{supremum} $\sup\mu\in\bbA_0$ of type $t\mu$ such that
\begin{equation} \label{sup_def}
\bbA(\sup\mu,-)=\bbA\lda\mu;
\end{equation}
or equivalently, if the \emph{Yoneda embedding} $\sY:\bbA\lra\PA$ has a left adjoint $\sup:\PA\lra\bbA$ in $\QCat$. It is well known that $\bbA$ is a complete $\CQ$-category if, and only if, $\bbA^{\op}$ is a complete $\CQ^{\op}$-category\footnote{The dual of a $\CQ$-category $\bbA$, denoted by $\bbA^{\op}$, is a $\CQ^{\op}$-category with $\bbA^{\op}_0=\bbA_0$ and $\bbA^{\op}(x,y)=\bbA(y,x)$ for all $x,y\in\bbA_0$.} \cite{Stubbe2005}, where the completeness of $\bbA^{\op}$ may be translated as each $\lam\in\PdA$ admitting an \emph{infimum} $\inf\lam\in\bbA_0$ of type $t\lam$ such that
\begin{equation} \label{inf_def}
\bbA(-,\inf\lam)=\lam\rda\bbA;
\end{equation}
or equivalently, the \emph{co-Yoneda embedding} $\sYd:\bbA\oto\PdA,\ x\mapsto\bbA(x,-)$ admitting a right adjoint $\inf:\PdA\lra\bbA$ in $\QCat$.

\begin{lem}[Yoneda] (See \cite{Stubbe2005}.) \label{Yoneda_lemma}
For any $\CQ$-category $\bbA$ and $\mu\in\PA$, $\lam\in\PdA$,
$$\mu=\PA(\sY_{\bbA}-,\mu)=(\sY_{\bbA})_{\nat}(-,\mu),\quad\lam=\PdA(\lam,\sYd_{\bbA}-)=(\sYd_{\bbA})^{\nat}(\lam,-).$$
\end{lem}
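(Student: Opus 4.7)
The plan is to verify both equalities by a pointwise unfolding of definitions. Note first that the second equality in each clause, namely $\PA(\sY_{\bbA}-,\mu)=(\sY_{\bbA})_{\nat}(-,\mu)$ and $\PdA(\lam,\sYd_{\bbA}-)=(\sYd_{\bbA})^{\nat}(\lam,-)$, is literally the definitions $F_{\nat}=\bbB(F-,-)$ and $F^{\nat}=\bbB(-,F-)$ instantiated at $F=\sY_{\bbA}$ and $F=\sYd_{\bbA}$, so there is nothing to prove for those.

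The substance is therefore the first equality in each clause. For the presheaf case, fix $x\in\bbA_0$ and unfold: since $\sY_{\bbA}x=\bbA(-,x)$, the right-hand side evaluated at $x$ is $\mu\lda\bbA(-,x)\in\CQ(tx,t\mu)$, which is the largest $\CQ$-arrow $\al\in\CQ(tx,t\mu)$ satisfying $\al\circ\bbA(y,x)\leq\mu(y)$ for every $y\in\bbA_0$. The inequality $\mu(x)\leq\mu\lda\bbA(-,x)$ is precisely this universal property applied to $\al=\mu(x)$, whose required compatibilities $\mu(x)\circ\bbA(y,x)\leq\mu(y)$ are among the defining axioms of the $\CQ$-distributor $\mu:\bbA\oto\star_{t\mu}$. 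The converse inequality uses the unit law $1_{tx}\leq\bbA(x,x)$: for $\al=\mu\lda\bbA(-,x)$ one has $\al\leq\al\circ\bbA(x,x)\leq\mu(x)$, hence $\mu\lda\bbA(-,x)\leq\mu(x)$.

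The copresheaf case is entirely dual. With $\sYd_{\bbA}x=\bbA(x,-)$, the equality $\lam=\PdA(\lam,\sYd_{\bbA}-)$ reduces pointwise at $x$ to $\lam(x)=\bbA(x,-)\rda\lam$ in $\CQ(t\lam,tx)$. Here $\geq$ follows from the distributor axiom $\bbA(x,y)\circ\lam(x)\leq\lam(y)$, which is the adjoint form of $\lam(x)\leq\bbA(x,-)\rda\lam$, and $\leq$ follows by precomposing any candidate with $1_{tx}\leq\bbA(x,x)$.

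No serious obstacle arises here; the only care needed is in tracking the types of the $\CQ$-arrows involved so that $\lda$, $\rda$ and $\circ$ are applied in the correct hom-sets. Once each expression is identified as a $\CQ$-arrow in the appropriate $\CQ(X,Y)$, both inequalities in each clause are immediate from the two defining axioms of a $\CQ$-category/$\CQ$-distributor, namely the unit law $1_{tx}\leq\bbA(x,x)$ and the compatibility of $\mu$ (respectively $\lam$) with the hom-arrows.
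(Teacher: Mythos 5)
Your proof is correct: the paper itself does not prove this lemma (it is quoted from \cite{Stubbe2005}), and your pointwise unfolding---identifying $\PA(\sY_{\bbA}x,\mu)=\mu\lda\bbA(-,x)$ and $\PdA(\lam,\sYd_{\bbA}x)=\bbA(x,-)\rda\lam$, then using the distributor axiom for one inequality and the unit law $1_{tx}\leq\bbA(x,x)$ for the other---is exactly the standard argument. The only blemish is in the copresheaf paragraph, where the labels are transposed: the distributor axiom $\bbA(x,y)\circ\lam(x)\leq\lam(y)$ gives $\lam(x)\leq\bbA(x,-)\rda\lam$ (not $\geq$), while composing a candidate with $1_{tx}\leq\bbA(x,x)$ gives $\bbA(x,-)\rda\lam\leq\lam(x)$; since both inequalities are in fact established, this is a harmless slip of notation rather than a gap.
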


\begin{thm} (See \cite{Stubbe2006}.) \label{complete_tensor}
A $\CQ$-category $\bbA$ is complete if, and only if,
\begin{enumerate}[label={\rm(\arabic*)}]
\item $\bbA$ is \emph{tensored} in the sense that for any $x\in\bbA_0$ and $f\in\CP(tx)$\footnote{$f\in\CP(tx):=\CP\star_{tx}$ is essentially a $\CQ$-arrow with domain $tx$. Similarly, $g\in\CPd(tx):=\CPd\star_{tx}$ is precisely a $\CQ$-arrow with codomain $tx$.}, there exists $f\otimes x\in\bbA_0$ of type $\cod f$ with $\bbA(f\otimes x,-)=\bbA(x,-)\lda f$;
\item $\bbA$ is \emph{cotensored} in the sense that the for any $x\in\bbA_0$ and $g\in\CPd(tx)$, there exists $g\rat x\in\bbA_0$ of type $\dom g$ with $\bbA(-,g\rat x)=g\rda\bbA(-,x)$;
\item $\bbA$ is \emph{order-complete} in the sense that each $\bbA_X$, the $\CQ$-subcategory of $\bbA$ consisting of all objects of type $X\in\ob\CQ$, admits all joins in the underlying order.
\end{enumerate}
\end{thm}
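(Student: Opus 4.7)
The plan is to prove both directions by interpreting tensors, cotensors, and order-joins as (co)suprema of suitable (co)presheaves, using the defining equations \eqref{sup_def} and \eqref{inf_def} together with the adjointness structure in $\CQ$.

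For the forward direction, assume $\sup\dv\sY:\bbA\lra\PA$ exists. Given $x\in\bbA_0$ and $f\in\CP(tx)$, the composite $f\circ\sY x:\bbA\oto\star_{\cod f}$ is a presheaf, and I set $f\otimes x:=\sup(f\circ\sY x)$; applying \eqref{sup_def} together with the iterated right adjointness $\bbA\lda(f\circ\sY x)=(\bbA\lda\sY x)\lda f$ and the Yoneda identity $\bbA\lda\sY x=\bbA(x,-)$ (which follows from $\sup\sY x=x$) yields the required formula. Cotensors are obtained symmetrically via $g\rat x:=\inf(\sYd x\circ g)$, using that $\bbA^{\op}$ is a complete $\CQ^{\op}$-category. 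For order-completeness, given $\{x_i\}\subseteq\bbA_X$, I put $\mu:=\bv_i\sY x_i\in\PA$; combining the unit $\mu\leq\sY\sup\mu$ (which yields $x_j\leq\sup\mu$ for each $j$) with the identity $\bbA(\sup\mu,-)=\bw_i\bbA(x_i,-)$ (derived from \eqref{sup_def} and join-preservation of $\circ$) shows that $\sup\mu$ is the join of $\{x_i\}$ in $\bbA_X$.

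For the converse, assume $\bbA$ is tensored, cotensored, and order-complete, and let $\mu\in\PA$ have type $X$. Since each $\mu(x)\otimes x$ has type $X$, I may form
$$\sup\mu:=\bv_{x\in\bbA_0}(\mu(x)\otimes x)\in\bbA_X,$$
and verify \eqref{sup_def} pointwise by reducing $\bbA(\sup\mu,y)$ first to $\bw_x\bbA(\mu(x)\otimes x,y)$ and then, via the tensor property, to $\bw_x(\bbA(x,y)\lda\mu(x))=(\bbA\lda\mu)(y)$. The main obstacle is the first reduction, which amounts to the formula
$$\bbA(\bv_i z_i,y)=\bw_i\bbA(z_i,y)$$
whenever the $z_i$ share a common type $X$ and $y$ has arbitrary type $Y$. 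The inequality ``$\leq$'' is immediate from the antimonotonicity of $\bbA(-,y)$ with respect to the underlying order on $\bbA_X$; for ``$\geq$'', set $g:=\bw_i\bbA(z_i,y)\in\CQ(X,Y)$ and invoke cotensoredness: from $g\leq\bbA(z_i,y)$ one obtains $z_i\leq g\rat y$ by \eqref{inf_def}, whence $\bv_i z_i\leq g\rat y$ by the universal property of the order-join, and unwinding this last inequality through \eqref{inf_def} delivers $g\leq\bbA(\bv_i z_i,y)$, closing the argument.
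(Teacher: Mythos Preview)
The paper does not supply its own proof of this theorem: it is stated with the attribution ``(See \cite{Stubbe2006})'' and no argument is given. So there is nothing in the paper to compare your proposal against.

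Your argument is essentially correct and follows the standard route. A couple of cosmetic remarks. In the forward direction you justify $\bbA\lda\sY x=\bbA(x,-)$ via ``$\sup\sY x=x$'', but equality need not hold in a non-skeletal $\bbA$; the identity $\bbA\lda\sY x=\bbA(x,-)$ is better seen directly as the Yoneda-type computation $\PA(\sY x,\sY-)=\bbA(x,-)$, which does not require skeletality. In the converse, when you write ``by \eqref{inf_def}'' to pass from $g\leq\bbA(z_i,y)$ to $z_i\leq g\rat y$, the reference you actually want is the cotensor defining equation $\bbA(-,g\rat y)=g\rda\bbA(-,y)$ from item~(2) of the statement, not the general infimum formula \eqref{inf_def}; the reasoning itself is fine. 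With these adjustments your proof goes through and matches the argument one finds in Stubbe's paper.
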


\begin{prop} (See \cite{Shen2013a,Stubbe2005}.) \label{PA_PdA_sup}
For any $\CQ$-category $\bbA$, $\PA$ and $\PdA$ are both complete $\CQ$-categories in which
\begin{enumerate}[label={\rm(\arabic*)}]
\item $f\otimes_{\PA}\mu=f\circ\mu$,\quad $g\rat_{\PA}\mu=g\rda\mu$ for all $\mu\in\PA$, $f\in\CP(t\mu)$, $g\in\CPd(t\mu)$,
\item $f\otimes_{\PdA}\lam=\lam\lda f$,\quad $g\rat_{\PdA}\lam=\lam\circ g$ for all $\lam\in\PdA$, $f\in\CP(t\lam)$, $g\in\CPd(t\lam)$,
\item ${\sup}_{\PA}\Theta=\Theta\circ(\sY_{\bbA})_{\nat}=\displaystyle\bv\limits_{\mu\in\PA}\Theta(\mu)\circ\mu$ for all $\Theta\in\CP\PA$,
\item ${\inf}_{\PA}\Lam=\Lam\rda(\sY_{\bbA})_{\nat}=\displaystyle\bw\limits_{\mu\in\PA}\Lam(\mu)\rda\mu$ for all $\Lam\in\CPd\PA$,
\item ${\sup}_{\PdA}\Theta=(\sYd_{\bbA})^{\nat}\lda\Theta=\displaystyle\bw\limits_{\lam\in\PdA}\lam\lda\Theta(\lam)$ for all $\Theta\in\CP\PdA$, and
\item ${\inf}_{\PdA}\Lam=(\sYd_{\bbA})^{\nat}\circ\Lam=\displaystyle\bv\limits_{\lam\in\PdA}\lam\circ\Lam(\lam)$ for all $\Lam\in\CPd\PdA$.
\end{enumerate}
\end{prop}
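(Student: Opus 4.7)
The plan is to apply Theorem \ref{complete_tensor} to $\PA$: once $\PA$ is shown to be tensored and cotensored as in item (1) and order-complete, its completeness is automatic, and the supremum/infimum formulas of items (3) and (4) follow from the tensor/cotensor descriptions combined with pointwise joins in $\CQ$. The corresponding results for $\PdA$, namely items (2), (5) and (6), will be deduced by dualization.

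Order-completeness of $\PA$ is immediate: the local order on $\PA$ coincides with the local order in $\QDist$, which is pointwise in $\CQ$, so joins inside each $\PA_X$ are computed coordinatewise in the complete hom-lattices of $\CQ$. For the tensor formula I would verify $\PA(f\circ\mu,\nu)=\PA(\mu,\nu)\lda f$ by unfolding both sides to $\nu\lda(f\circ\mu)=(\nu\lda\mu)\lda f$, which is the associativity of $\lda$ with composition and holds in any quantaloid. The cotensor formula $\PA(\nu,g\rda\mu)=g\rda\PA(\nu,\mu)$ reduces analogously to $(g\rda\mu)\lda\nu=g\rda(\mu\lda\nu)$, the commutation of $\rda$ with $\lda$ obtained by iterating the two adjunctions. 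Theorem \ref{complete_tensor} then yields completeness of $\PA$, after which the supremum formula in item (3) is checked directly against the defining equation $\PA(\sup_{\PA}\Theta,-)=\PA\lda\Theta$: expanding both sides pointwise in $\CQ$ and invoking the Yoneda identity $(\sY_{\bbA})_{\nat}(x,\mu)=\mu(x)$ reduces each side to $\bw_{\mu}(\nu\lda\mu)\lda\Theta(\mu)$. The infimum formula of item (4) is obtained in parallel against $\PA(-,\inf_{\PA}\Lam)=\Lam\rda\PA$, the verification again reducing to the identity $(g\rda h)\lda f=g\rda(h\lda f)$.

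For $\PdA$ I would proceed by dualization: a copresheaf on $\bbA$ is the same data as a presheaf on $\bbA^{\op}$ regarded as a $\CQ^{\op}$-category, so items (2), (5) and (6) are the images of items (1), (3) and (4) under the replacement of $\CQ$ by $\CQ^{\op}$ and $\bbA$ by $\bbA^{\op}$. The only wrinkle, flagged by Remark \ref{PdA_QDist_order}, is that the underlying order on $\PdA$ is the reverse of the local $\QDist$-order; this exchanges $\sup$ with $\inf$ and $\otimes$ with $\rat$ between the two viewpoints, which is what produces the asymmetric-looking $\lda$ and $\circ$ appearing on the right in the $\PdA$-formulas.

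The main obstacle is not logical depth but typing discipline: every object carries a type in $\ob\CQ$, the two residuals $\lda$ and $\rda$ have opposite composition conventions, and the reversed order on $\PdA$ must be honoured throughout. Once a consistent matrix-calculus notation is fixed, each of the six identities collapses into at most two lines of residual arithmetic in $\CQ$ together with a single application of the Yoneda lemma.
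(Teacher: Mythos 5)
Your proposal is correct; note that the paper itself offers no proof of this proposition, importing it from Stubbe and Shen--Zhang, so there is no internal argument to compare against, and your direct verification (tensors and cotensors via the residuation identities $\nu\lda(f\circ\mu)=(\nu\lda\mu)\lda f$ and $(g\rda\mu)\lda\nu=g\rda(\mu\lda\nu)$, order-completeness via pointwise joins, Theorem \ref{complete_tensor}, then checking \eqref{sup_def} and \eqref{inf_def} against the stated formulas using $(\sY_{\bbA})_{\nat}(x,\mu)=\mu(x)$, and dualizing for $\PdA$) is exactly the standard route and goes through. Two minor remarks: you should say explicitly that the candidate objects $f\circ\mu$, $g\rda\mu$, pointwise joins, $\Theta\circ(\sY_{\bbA})_{\nat}$ and $\Lam\rda(\sY_{\bbA})_{\nat}$ are again $\CQ$-distributors (routine, since composition preserves joins and residuals of distributors are distributors); and your appeal to Theorem \ref{complete_tensor} is strictly speaking redundant, since the direct verification of item (3) for all $\Theta\in\CP\PA$ already establishes completeness of $\PA$, with (1) then a special case.
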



\begin{prop} \label{monad_reflective} (See \cite{Shen2013a}.)
Suppose  $F:\bbA\lra\bbA$ is a $\CQ$-monad (resp. $\CQ$-comonad) on a skeletal\footnote{A $\CQ$-category $\bbA$ is \emph{skeletal} if $x\cong y$ (i.e., $x\leq y$ and $y\leq x$) in the underlying order of $\bbA$ necessarily forces $x=y$.} $\CQ$-category $\bbA$; that is, $1_{\bbA}\leq F$ (resp. $F\leq 1_{\bbA}$) and $FF=F$. Let
$$\Fix(F):=\{x\in\bbA_0\mid Fx=x\}=\{Fx\mid x\in\bbA_0\}$$
be the $\CQ$-subcategory of $\bbA$ consisting of the fixed points of $F$. Then
\begin{enumerate}[label={\rm(\arabic*)}]
\item the inclusion $\CQ$-functor $\Fix(F)\ \to/^(->/\bbA$ is right (resp. left) adjoint to the codomain restriction $F:\bbA\lra\Fix(F)$;
\item $\Fix(F)$ is a complete $\CQ$-category provided so is $\bbA$.
\end{enumerate}
\end{prop}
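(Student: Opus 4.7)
My plan is to handle the $\CQ$-monad case in full; the $\CQ$-comonad case then follows by order-dualization (equivalently, by applying the monad result to $F^{\op}:\bbA^{\op}\lra\bbA^{\op}$). First I verify the set identity $\Fix(F)=\{Fx\mid x\in\bbA_0\}$: ``$\supseteq$'' uses the idempotency $F(Fx)=FFx=Fx$, while ``$\subseteq$'' is tautological. Since $F$ is a $\CQ$-functor and $\Fix(F)$ is a full sub-$\CQ$-category of $\bbA$, the corestriction $F:\bbA\lra\Fix(F)$ is automatically a $\CQ$-functor.

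For (1), I exhibit the adjunction $F\dv i$ (with $i:\Fix(F)\hookrightarrow\bbA$ the inclusion) via its unit--counit pair: the unit is the monad inequality $1_{\bbA}\leq F=iF$, and the counit is the equality $Fi=1_{\Fix(F)}$, which holds because $Fy=y$ for every $y\in\Fix(F)$; the triangle identities become trivial after these substitutions. Equivalently, since $i$ is fully faithful, the adjunction condition $\Fix(F)(Fx,y)=\bbA(x,iy)$ reduces to the hom-equation $\bbA(Fx,y)=\bbA(x,y)$ for $x\in\bbA_0$, $y\in\Fix(F)$; here ``$\leq$'' follows from $1_{tx}\leq\bbA(x,Fx)$ (the underlying-order inequality $x\leq Fx$) together with composition in $\bbA$, while ``$\geq$'' combines the $\CQ$-functoriality $\bbA(x,y)\leq\bbA(Fx,Fy)$ with $Fy=y$.

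For (2), assume $\bbA$ is complete. Given any $\mu\in\CP\Fix(F)$, compose with the cograph $i^{\nat}:\bbA\oto\Fix(F)$ to obtain $\mu\circ i^{\nat}\in\PA$, form $s:=\sup_{\bbA}(\mu\circ i^{\nat})$ using completeness of $\bbA$, and set $\sup_{\Fix(F)}\mu:=Fs$. Verifying the defining identity \eqref{sup_def} for $Fs$ inside $\Fix(F)$ then reduces, via the hom-equation established in (1), to the already-known identity $\bbA(s,-)=\bbA\lda(\mu\circ i^{\nat})$ in $\bbA$. Conceptually, $F$ is a left adjoint in $\QCat$ and therefore preserves suprema, so completeness transfers from $\bbA$ to $\Fix(F)$ through $F$. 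I do not anticipate a real obstacle; the only point demanding attention is keeping straight the underlying order on $\bbA$ versus the local order in $\QDist$ (Remark \ref{PdA_QDist_order}), together with the observation that the fully faithful inclusion $i$ makes hom-arrows in $\Fix(F)$ literal restrictions of those in $\bbA$, so that intrinsic completeness of $\Fix(F)$ and completeness as a sub-$\CQ$-category of $\bbA$ coincide.
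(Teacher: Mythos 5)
Your argument is correct, and since the paper does not prove this proposition itself (it is quoted from \cite{Shen2013a}), there is no in-text proof to compare against; your route is the standard one and matches what the paper records: the adjunction $F\dv i$ via $1_{\bbA}\leq iF$ and $Fi=1_{\Fix(F)}$ (equivalently the hom-equation $\bbA(Fx,y)=\bbA(x,y)$ for fixed $y$), and the reflective-subcategory formula $\sup_{\Fix(F)}\mu=F\sup_{\bbA}(\mu\circ i^{\nat})$, which is exactly the formula stated immediately after the proposition. The only compressed step is in (2): besides $\bbA(Fs,y)=\bbA(s,y)$, the reduction also needs $(\bbA\lda(\mu\circ i^{\nat}))(y)=(\Fix(F)\lda\mu)(y)$ for $y\in\Fix(F)$, which follows from full faithfulness of the inclusion (e.g.\ from $\bbA\lda i^{\nat}=i_{\nat}$); this is routine and does not affect correctness, and your dualization to the comonad case is likewise sound.
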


If $\bbB=\Fix(F)$ for a $\CQ$-monad $F:\bbA\lra\bbA$, suprema in $\bbB$ are given by ${\sup}_{\bbB}\mu=F{\sup}_{\bbA}(\mu\circ I^{\nat})$ for all $\mu\in\PB$. In particular, for all $x,x_i\in\bbB_0$ $(i\in I)$, $f\in\CP(tx)$,
\begin{equation} \label{tensor_closure_system}
f\otimes_{\bbB}x=F(f\otimes_{\bbA}x),\quad\bigsqcup_{i\in I}x_i=F\Big(\bv_{i\in I}x_i\Big),
\end{equation}
where $\bigsqcup$ and $\bv$ respectively denote the underlying joins in $\bbB$ and $\bbA$.

Each $\CQ$-distributor $\phi:\bbA\oto\bbB$ induces a \emph{Kan adjunction} \cite{Shen2013a} $\phi^*\dv\phi_*$ in $\QCat$ given by
\begin{align*}
&\phi^*:\PB\lra\PA,\quad \lam\mapsto\lam\circ\phi,\\
&\phi_*:\PA\lra\PB,\quad \mu\mapsto\mu\lda\phi
\end{align*}
and a \emph{dual Kan adjunction} \cite{Shen2014} $\phi_{\dag}\dv\phi^{\dag}$ given by
\begin{align*}
&\phi_{\dag}:\PdB\lra\PdA,\quad\lam\mapsto\phi\rda\lam,\\
&\phi^{\dag}:\PdA\lra\PdB,\quad\mu\mapsto\phi\circ\mu.
\end{align*}

The fixed points of the $\CQ$-monad $\phi_*\phi^*:\PB\lra\PB$ and the $\CQ$-comonad $\phi^*\phi_*:\PA\lra\PA$ induced by the Kan adjunction $\phi^*\dv\phi_*:\PA\lra\PB$,
\begin{align*}
&\Kphi:=\Fix(\phi_*\phi^*)=\{\lam\in\PB\mid\phi_*\phi^*\lam=\lam\}\quad\text{and}\\
&\Rphi:=\Fix(\phi^*\phi_*)=\{\mu\in\PA\mid\phi^*\phi_*\mu=\mu\},
\end{align*}
are both complete $\CQ$-categories by Proposition \ref{monad_reflective}(2) since so are $\PB$ and $\PA$. It is obvious that $\Kphi$ and $\Rphi$ are isomorphic $\CQ$-categories with the isomorphisms given by
$$\phi^*:\Kphi\lra\Rphi\quad\text{and}\quad\phi_*:\Rphi\lra\Kphi.$$

\begin{exmp} \label{KA}
For the identity $\CQ$-distributor $\bbA:\bbA\oto\bbA$ on a $\CQ$-category $\bbA$, $\CK\bbA=\FK\bbA=\PA$ is precisely the presheaf $\CQ$-category of $\bbA$.
\end{exmp}

\begin{prop} \label{star_graph_adjoint} (See \cite{Heymans2010}.)
$(-)^*:(\QDist)^{\op}\lra\QCat$ and $(-)^{\dag}:(\QDist)^{\co}\lra\QCat$ are both 2-functorial, and one has two pairs of adjoint 2-functors
$$(-)^{\nat}\dv(-)^*:(\QDist)^{\op}\lra\QCat\quad\text{and}\quad(-)_{\nat}\dv(-)^{\dag}:(\QDist)^{\co}\lra\QCat.$$
\end{prop}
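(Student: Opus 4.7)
The plan is to exhibit the adjunctions as explicit bijections of hom-sets given by the classical presheaf/copresheaf correspondence, then deduce 2-naturality and the triangle identities from the (co-)Yoneda lemma.

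First I would verify the 2-functoriality of $(-)^*$ and $(-)^\dag$. On objects these send $\bbA$ to $\PA$ and $\PdA$ respectively. For $(-)^*$, associativity of distributor composition gives $(\psi\circ\phi)^*\lam=\lam\circ\psi\circ\phi=\phi^*\psi^*\lam$, so $(-)^*$ is contravariant on arrows, matching the ``op'' on the source; and from $\phi\le\psi$ in $\QDist$ we get $\lam\circ\phi\le\lam\circ\psi$ pointwise, which is $\phi^*\le\psi^*$ in the local order of $\QCat$, so 2-cells are preserved. For $(-)^\dag$, composition is preserved: $(\psi\circ\phi)^\dag\mu=\psi\circ\phi\circ\mu=\psi^\dag\phi^\dag\mu$, covariantly. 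For 2-cells, if $\phi\le\psi$ in $\QDist$ then $\phi\circ\mu\le\psi\circ\mu$ in $\QDist$, but by Remark \ref{PdA_QDist_order} this reverses in the underlying order of $\PdB$; hence $\phi^\dag\ge\psi^\dag$ in $\QCat$, matching the ``co''.

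Next I would set up the hom-set bijection for $(-)^\nat\dv(-)^*$. A presheaf $\mu\in\PB$ of type $X$ is by definition a $\CQ$-distributor $\mu:\bbB\oto\star_X$, so to give a $\CQ$-functor $F:\bbA\lra\PB$ is precisely to assign to each $x\in\bbA_0$ a $\CQ$-distributor $Fx:\bbB\oto\star_{tx}$; the $\CQ$-functoriality condition on $F$ unpacks exactly to the bimodule condition on the single $\CQ$-distributor $\phi_F:\bbB\oto\bbA$ defined by $\phi_F(y,x):=(Fx)(y)$. This yields a bijection
$$\QCat(\bbA,\PB)\ \cong\ \QDist(\bbB,\bbA)\ =\ (\QDist)^{\op}((-)^{\nat}\bbA,\bbB),$$
which is 2-natural: the local orders on both sides reduce to pointwise comparison of the same family of $\CQ$-arrows $\{\phi_F(y,x)\}$.

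The unit of the adjunction at $\bbA$ corresponds to the identity distributor $\bbA:\bbA\oto\bbA$ under the bijection above, and this is precisely the Yoneda embedding $\sY_\bbA:\bbA\lra\PA$, $x\mapsto\bbA(-,x)$. The counit at a $\CQ$-distributor $\phi:\bbB\oto\bbA$ corresponds, via naturality, to the identity on $\phi^*:\PA\lra\PB$ in $\QCat$; unwinding gives the $\CQ$-distributor $(\sY_\bbA)^{\nat}\lda\phi$, and the triangle identities reduce to the Yoneda identities of Lemma \ref{Yoneda_lemma}, $\mu=(\sY_\bbA)_\nat(-,\mu)$.

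For $(-)_{\nat}\dv(-)^{\dag}$ the argument is dual: a copresheaf on $\bbB$ of type $X$ is a $\CQ$-distributor $\star_X\oto\bbB$, so a $\CQ$-functor $F:\bbA\lra\PdB$ corresponds to a $\CQ$-distributor $\phi_F:\bbA\oto\bbB$ via $\phi_F(x,y):=(Fx)(y)$, giving a bijection $\QCat(\bbA,\PdB)\cong\QDist(\bbA,\bbB)=(\QDist)^{\co}(\bbA,\bbB)$; here the ``co'' is needed because the underlying order on $\PdB$ reverses the local order of $\QDist$, as noted in Remark \ref{PdA_QDist_order}. The unit is the co-Yoneda embedding $\sYd_\bbA$, and the triangle identities again follow from Lemma \ref{Yoneda_lemma}. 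The only mild subtlety in the whole argument is keeping the op/co bookkeeping straight for the two directions; once this is fixed, all verifications are routine.
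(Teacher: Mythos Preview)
The paper does not supply a proof of this proposition; it simply states it with a citation to Heymans's thesis. Your proposal fills in the standard argument, and the essential content---the transpose bijection $\QCat(\bbA,\PB)\cong\QDist(\bbB,\bbA)$ together with the Yoneda lemma for the unit/counit---is correct and is exactly what the paper relies on immediately afterwards (see the transposes $\tphi$, $\hphi$ in \eqref{tphi_def}--\eqref{dist_Yoneda}).

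One localized wobble: your description of the counit is misplaced. The counit of $(-)^{\nat}\dv(-)^*$ is indexed by objects of $(\QDist)^{\op}$, i.e.\ by $\CQ$-categories $\bbB$, not by distributors $\phi$; at $\bbB$ it is the arrow $\PB\to\bbB$ in $(\QDist)^{\op}$, namely the $\CQ$-distributor $(\sY_{\bbB})_{\nat}:\bbB\oto\PB$, which corresponds under your bijection to $1_{\PB}\in\QCat(\PB,\PB)$. The expression ``$(\sY_\bbA)^{\nat}\lda\phi$'' does not describe the counit and should be dropped. This does not break the argument---the triangle identities still reduce to the Yoneda identities of Lemma~\ref{Yoneda_lemma} as you claim---but that paragraph should be rewritten with the counit located at $\CQ$-categories.
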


The adjunctions $(-)^{\nat}\dv(-)^*$ and $(-)_{\nat}\dv(-)^{\dag}:(\QDist)^{\co}\lra\QCat$ give rise to isomorphisms
$$(\QCat)^{\co}(\bbA,\PdB)\cong\QDist(\bbA,\bbB)\cong\QCat(\bbB,\PA)$$
for all $\CQ$-categories $\bbA$, $\bbB$. We denote by
\begin{align}
&\tphi:\bbB\lra\PA,\quad\tphi y=\phi(-,y)=\phi^*\sY_{\bbB}y,\label{tphi_def}\\
&\hphi:\bbA\lra\PdB,\quad\hphi x=\phi(x,-)=\phi^{\dag}\sYd_{\bbA}x \label{hphi_def}
\end{align}
for the \emph{transposes} of each $\CQ$-distributor $\phi:\bbA\oto\bbB$, which are determined by
\begin{equation} \label{dist_Yoneda}
\phi=(\sYd_{\bbB})^{\nat}\circ\hphi_{\nat}=\tphi^{\nat}\circ(\sY_{\bbA})_{\nat}.
\end{equation}

Each $\CQ$-functor $F:\bbA\lra\bbB$ gives rise to four $\CQ$-functors between the $\CQ$-categories of presheaves and copresheaves on $\bbA$, $\bbB$:
\begin{equation} \label{Fra_def}
\begin{array}{llll}
F^{\ra}:=(F^{\nat})^*:&\PA\lra\PB,& F^{\la}:=(F_{\nat})^*=(F^{\nat})_*:&\PB\lra\PA,\\
F^{\nra}:=(F_{\nat})^{\dag}:&\PdA\lra\PdB,& F^{\nla}:=(F^{\nat})^{\dag}=(F_{\nat})_{\dag}:&\PdB\lra\PdA.
\end{array}
\end{equation}
As special cases of (dual) Kan adjunctions one immediately has $F^{\ra}\dv F^{\la}$ and $F^{\nla}\dv F^{\nra}$ in $\QCat$.

\begin{prop} \label{la_sup_preserving} (See \cite{Stubbe2005}.)
Let $F:\bbA\lra\bbB$ be a $\CQ$-functor, with $\bbA$ complete. Then the following statements are equivalent:
\begin{enumerate}[label={\rm(\roman*)}]
\item $F$ is a left (resp. right) adjoint in $\QCat$.
\item $F$ is \emph{$\sup$-preserving} (resp. \emph{$\inf$-preserving}) in the sense that $F\sup_{\bbA}=\sup_{\bbB}F^{\ra}$ (resp. $F\inf_{\bbA}=\inf_{\bbB}F^{\nra}$).
\item $F$ is a left (resp. right) adjoint between the underlying ordered sets of $\bbA$, $\bbB$, and preserves tensors (resp. cotensors) in the sense that $F(f\otimes_{\bbA}x)= f\otimes_{\bbB}Fx$ (resp. $F(f\rat_{\bbA} x)=f\rat_{\bbB}Fx$) for all choices of $f$ and $x$.
\end{enumerate}
\end{prop}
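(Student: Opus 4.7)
My plan is to establish the equivalences for the left-adjoint/sup-preserving/tensor-preserving branch; the right-adjoint/inf-preserving/cotensor-preserving branch follows by duality, applying the first statement to the $\CQ^{\op}$-category $\bbA^{\op}$ and invoking that $\bbA$ is complete if and only if $\bbA^{\op}$ is. The technical pivot is the decomposition
\[
\sup_{\bbA}\mu = \bigvee_{x \in \bbA_0} \mu(x) \otimes_{\bbA} x,
\]
which is immediate from Theorem \ref{complete_tensor} together with \eqref{sup_def}: both sides have hom $\bigwedge_x \bbA(x, -) \lda \mu(x)$ into each $z \in \bbA_0$. A parallel calculation, using the bilinearity of the tensor ($f \otimes (g \otimes y) = (f \circ g) \otimes y$) and the density identity $\sup_{\bbB}\sY_{\bbB}(Fx) = Fx$, yields $\sup_{\bbB}(F^{\ra}\mu) = \bigvee_x \mu(x) \otimes_{\bbB} Fx$.

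With these decompositions in hand, (ii) $\Leftrightarrow$ (iii) is essentially formal. Given (iii),
\[
F\sup_{\bbA}\mu = F\bigvee_x(\mu(x) \otimes_{\bbA} x) = \bigvee_x F(\mu(x) \otimes_{\bbA} x) = \bigvee_x \mu(x) \otimes_{\bbB} Fx = \sup_{\bbB}(F^{\ra}\mu).
\]
Conversely, from (ii), tensors are themselves suprema ($f \otimes_{\bbA} x = \sup_{\bbA}(f \circ \sY_{\bbA} x)$) and typewise joins are sups of $\bigvee_i \sY_{\bbA} x_i \in \PA$; combined with the Yoneda identity $F^{\ra}(\sY_{\bbA} x) = \sY_{\bbB}(Fx)$, sup-preservation yields preservation of both. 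The underlying-order left-adjoint property then follows from the standard adjoint functor theorem for complete lattices, applied to each join-preserving $F:\bbA_X \to \bbB_X$.

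For (i) $\Rightarrow$ (ii): assuming $F \dashv G$ in $\QCat$ so that $F_{\nat} = G^{\nat}$ (i.e., $\bbB(F-,-) = \bbA(-, G-)$), I would unfold both $\bbB(F\sup_{\bbA}\mu, y)$ and $(\bbB \lda F^{\ra}\mu)(y)$ to the common expression $\bigwedge_x \bbB(Fx, y) \lda \mu(x)$. The former uses the adjunction identity together with \eqref{sup_def}; the latter uses the quantaloid-level identities $a \lda (b \circ c) = (a \lda c) \lda b$ and $a \lda \bigvee_i b_i = \bigwedge_i (a \lda b_i)$, then collapses the outer meet over $y'$ by specialising $y' = Fx$. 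Then \eqref{sup_def} in $\bbB$ forces (ii). Conversely, given sup-preservation, I would define $Gy := \sup_{\bbA}(F^{\la}(\sY_{\bbB} y))$ and verify $\sY_{\bbA}(Gy) = F^{\la}(\sY_{\bbB} y) = \bbB(F-, y)$ directly, so that $\bbA(-, Gy) = \bbB(F-, y)$ and hence $F \dashv G$.

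The main obstacle lies in this final step: ensuring that the pointwise-defined $G$ is $\CQ$-functorial and that the adjunction $F \dashv G$ holds in $\QCat$ (not merely in $\Ord$ fiberwise). This boils down to the representability of $F^{\la}(\sY_{\bbB} y)$ as a consequence of sup-preservation, combined with the 2-functoriality of $(-)^*$ from Proposition \ref{star_graph_adjoint} and the Yoneda lemma \ref{Yoneda_lemma}, to glue the local data into a genuine $\CQ$-distributor equality $F_{\nat} = G^{\nat}$.
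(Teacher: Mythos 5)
The paper offers no proof of this proposition (it is quoted from \cite{Stubbe2005}), so your argument stands on its own; its architecture is the standard one and most of it is sound. In particular the ``main obstacle'' you flag in (ii)$\Longrightarrow$(i) is not a real obstacle: with $Gy:=\sup_{\bbA}F^{\la}\sY_{\bbB}y$, the inequality $\bbB(F-,y)=F^{\la}\sY_{\bbB}y\leq\sY_{\bbA}Gy$ is just the unit $\mu\leq\sY_{\bbA}\sup_{\bbA}\mu$, while the reverse one follows from functoriality of $F$ together with $FGy=\sup_{\bbB}F^{\ra}F^{\la}\sY_{\bbB}y\leq\sup_{\bbB}\sY_{\bbB}y=y$ (using (ii), the counit of $F^{\ra}\dv F^{\la}$, and monotonicity of existing suprema); the distributor equality $F_{\nat}=G^{\nat}$ then makes $G$ a $\CQ$-functor and $F\dv G$ automatically. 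Your (i)$\Longrightarrow$(ii) and (ii)$\Longrightarrow$(iii) computations are also fine.

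The genuine weak point is the identity $\sup_{\bbB}(F^{\ra}\mu)=\bigvee_{x}\mu(x)\otimes_{\bbB}Fx$ on which your (iii)$\Longrightarrow$(ii) rests. Only $\bbA$ is assumed complete, so under hypothesis (iii) the supremum $\sup_{\bbB}F^{\ra}\mu$ is not yet known to exist --- its existence is precisely what must be proved --- and the ``parallel calculation'' you invoke (bilinearity plus density) is only available in a tensored and cotensored $\bbB$, where underlying joins are conical and $f\otimes_{\bbB}-$ preserves them; none of that is granted here. The repair is short: show directly that $b:=F\sup_{\bbA}\mu=\bigvee_{x}\mu(x)\otimes_{\bbB}Fx$ satisfies $\bbB(b,y)=\bigwedge_{x}\bbB(Fx,y)\lda\mu(x)$ for every $y$. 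The inequality $\leq$ is clear; for $\geq$, put $f:=\bigwedge_{x}\bbB(Fx,y)\lda\mu(x)$ and compute $f\otimes_{\bbB}b=F(f\otimes_{\bbA}\sup_{\bbA}\mu)=F\sup_{\bbA}(f\circ\mu)=\bigvee_{x}F\bigl((f\circ\mu(x))\otimes_{\bbA}x\bigr)=\bigvee_{x}(f\circ\mu(x))\otimes_{\bbB}Fx\leq y$, using tensor preservation, your decomposition in $\bbA$, and preservation of the existing join by the order-theoretic left adjoint $F$; hence $f\leq\bbB(b,y)$. Alternatively, and more economically, prove (iii)$\Longrightarrow$(i) directly: with $G_0$ the order-theoretic right adjoint, $\bbB(Fa,y)\otimes_{\bbB}Fa\leq y$, tensor preservation and the order adjunction give $\bbB(Fa,y)\leq\bbA(a,G_0y)$, while $FG_0y\leq y$ gives the converse, so $F_{\nat}=G_0^{\nat}$; then (ii) follows from (i) by your own computation, closing the cycle (i)$\Longrightarrow$(ii)$\Longrightarrow$(iii)$\Longrightarrow$(i).
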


Therefore, left adjoint $\CQ$-functors between complete $\CQ$-categories are precisely $\sup$-preserving $\CQ$-functors, and we denote by $\QSup$ the category of skeletal complete $\CQ$-categories and $\sup$-preserving $\CQ$-functors\footnote{$\QSup$ is written as $\QCCat$ in \cite{Shen2014,Shen2016a,Shen2013a}.}, which is in fact a quantaloid with pointwise local order inherited from $\QCat$. Dually, complete $\CQ$-categories and $\inf$-preserving $\CQ$-functors (or equivalently, right adjoint $\CQ$-functors) constitute a 2-subcategory $\QInf$ of $\QCat$; however, it should be careful that $(\QInf)^{\co}$ (rather than $\QInf$ itself) is a quantaloid. It is not difficult to verify the following isomorphisms of quantaloids:

\begin{prop} \label{QSup_QInf}
$\QSup\cong(\QInf)^{\co\op}\cong(\CQ^{\op}\text{-}\Sup)^{\op}\cong(\CQ^{\op}\text{-}\Inf)^{\co}$.
\end{prop}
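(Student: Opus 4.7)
The plan is to verify the three isomorphisms separately, since they reduce respectively to (i) the adjoint functor correspondence between $\sup$- and $\inf$-preserving maps, (ii) the self-dualization $\bbA\mapsto\bbA^{\op}$ that swaps $\CQ$- and $\CQ^{\op}$-enrichment, and (iii) the first isomorphism applied to $\CQ^{\op}$ in place of $\CQ$.

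For $\QSup\cong(\QInf)^{\co\op}$, the key input is Proposition \ref{la_sup_preserving}: each $F:\bbA\lra\bbB$ in $\QSup$ has a unique right adjoint $G:\bbB\lra\bbA$, and $G$ belongs to $\QInf$; conversely every arrow of $\QInf$ arises uniquely in this way. Sending $F\mapsto G$ reverses the direction of 1-cells (accounting for the ``$\op$''), and since $F\leq F'$ in $\QSup$ is well known to be equivalent to the respective right adjoints satisfying $G'\leq G$, it also reverses the local order (accounting for the ``$\co$''). The assignment is involutive on arrows, hence bijective on hom-sets. To upgrade from an isomorphism of 2-categories to an isomorphism of quantaloids, I would verify join preservation: if $F=\bigvee_i F_i$ is the pointwise join in $\QSup(\bbA,\bbB)$, then its right adjoint is $y\mapsto\bigwedge_i G_i y$, a meet in $\QInf(\bbB,\bbA)$ and hence a join in $(\QInf)^{\co\op}(\bbA,\bbB)$.

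For $(\QInf)^{\co\op}\cong(\CQ^{\op}\text{-}\Sup)^{\op}$, I would use the involution $\bbA\mapsto\bbA^{\op}$ on objects, which is already known to restrict to a bijection between complete $\CQ$-categories and complete $\CQ^{\op}$-categories. An arrow in $(\QInf)^{\co\op}(\bbA,\bbB)$ unfolds to an $\inf$-preserving $\CQ$-functor $H:\bbB\lra\bbA$; the same underlying map becomes a $\CQ^{\op}$-functor $H:\bbB^{\op}\lra\bbA^{\op}$, since the defining inequality is transported by the identity $\bbA^{\op}(x,y)=\bbA(y,x)$, and preservation of infima becomes preservation of suprema because $\inf$ in $\bbA$ is $\sup$ in $\bbA^{\op}$; this is precisely an arrow in $(\CQ^{\op}\text{-}\Sup)^{\op}(\bbA^{\op},\bbB^{\op})$. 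A 2-cell $H\Rightarrow K$ in $(\QInf)^{\co\op}$ is a pointwise inequality $K\leq H$ in the underlying order of $\bbA$, which becomes $H\leq K$ in $\bbA^{\op}$, i.e., a 2-cell in $(\CQ^{\op}\text{-}\Sup)^{\op}$. Involutivity of $(-)^{\op}$ makes the correspondence a bijection, and join preservation is immediate since joins on both sides are computed pointwise.

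The third isomorphism now drops out: applying the first isomorphism with $\CQ$ replaced by $\CQ^{\op}$ yields $\CQ^{\op}\text{-}\Sup\cong(\CQ^{\op}\text{-}\Inf)^{\co\op}$, and taking $(-)^{\op}$ on both sides gives $(\CQ^{\op}\text{-}\Sup)^{\op}\cong(\CQ^{\op}\text{-}\Inf)^{\co}$. The main obstacle is purely bookkeeping: the dualizations ``$\co$'', ``$\op$'', ``$\co\op$'' together with the enrichment swap $\CQ\leftrightarrow\CQ^{\op}$ interact in ways that are easy to confuse, and one must track at each step exactly which direction and which local order is being flipped. Keeping the adjoint correspondence $F\leftrightarrow G$ and the categorical dual $\bbA\mapsto\bbA^{\op}$ rigidly separated, and chasing concrete 1- and 2-cells through both sides, should prevent sign errors.
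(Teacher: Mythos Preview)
The paper does not actually supply a proof of this proposition: it is stated immediately after the sentence ``It is not difficult to verify the following isomorphisms of quantaloids'' and left without further argument. Your proposal correctly fills in the routine verifications the paper omits, using exactly the expected ingredients (the left/right adjoint correspondence from Proposition~\ref{la_sup_preserving} and the object-level dualization $\bbA\mapsto\bbA^{\op}$), so there is nothing to compare and no gap to flag.
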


\section{Regularity implies (ccd)} \label{Regularity_ccd}

$\CQ$-distributor $\phi:\bbA\oto\bbB$ is \emph{regular} if $\phi$ is a regular arrow in the quantaloid $\QDist$; that is, if the $\CQ$-distributor
$$\ophi=(\phi\rda\phi)\lda\phi:\bbB\oto\bbA$$
satisfies $\phi=\phi\circ\ophi\circ\phi$ (see Proposition \ref{regular_condition}).

\begin{exmp}
The graph $F_{\nat}:\bbA\oto\bbB$ and the cograph $F^{\nat}:\bbB\oto\bbA$ of any $\CQ$-functor $F:\bbA\lra\bbB$ are regular $\CQ$-distributors since $F_{\nat}\dv F^{\nat}$ in $\QDist$.
\end{exmp}

A $\CQ$-category $\bbA$ is \emph{constructively completely distributive}, or \emph{(ccd)} for short, if it is complete and $\sup_{\bbA}:\PA\lra\bbA$ admits a left adjoint $T_{\bbA}:\bbA\lra\PA$ in $\QCat$; that is, if there exists a string of adjoint $\CQ$-functors
$$T_{\bbA}\dv{\sup}_{\bbA}\dv\sY_{\bbA}:\bbA\lra\PA.$$

\begin{exmp} \label{PA_ccd}
For any $\CQ$-category $\bbA$, $\PA$ is a (ccd) $\CQ$-category. Indeed, from \eqref{Fra_def} and Proposition \ref{PA_PdA_sup}(3) one sees that $\sup_{\PA}=\sY_{\bbA}^{\la}:\CP\PA\lra\PA$ has a left adjoint $\sY_{\bbA}^{\ra}:\PA\lra\CP\PA$ in $\QCat$.
\end{exmp}

\begin{prop} \label{retract_ccd} (See \cite{Pu2015}.)
A retract\footnote{In any category $\CC$, an object $X$ is a \emph{retract} of an object $Y$ if there are $\CC$-arrows $f:X\lra Y$ and $g:Y\lra X$ such that $g\circ f=1_X$ (see \cite[Definition 1.7.3]{Borceux1994a}).} of a (ccd) $\CQ$-category in $\QSup$ is (ccd).
\end{prop}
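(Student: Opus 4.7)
The plan is to construct the left adjoint $T_{\bbA}$ of $\sup_{\bbA}$ directly from the retraction data. Let $F:\bbA\lra\bbB$ and $G:\bbB\lra\bbA$ be $\sup$-preserving $\CQ$-functors with $G\circ F=1_{\bbA}$, and assume $\bbB$ is (ccd) with string $T_{\bbB}\dv\sup_{\bbB}\dv\sY_{\bbB}$. I would define
$$T_{\bbA}:=G^{\ra}\circ T_{\bbB}\circ F:\bbA\lra\PA$$
and verify the adjunction $T_{\bbA}\dv\sup_{\bbA}$ by checking the unit and counit inequalities in $\QCat$.

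For the unit $1_{\bbA}\leq\sup_{\bbA}\circ T_{\bbA}$, since $G$ is $\sup$-preserving, Proposition \ref{la_sup_preserving} yields $\sup_{\bbA}\circ G^{\ra}=G\circ\sup_{\bbB}$, so
$$\sup_{\bbA}\circ T_{\bbA}=\sup_{\bbA}\circ G^{\ra}\circ T_{\bbB}\circ F=G\circ\sup_{\bbB}\circ T_{\bbB}\circ F\geq G\circ F=1_{\bbA},$$
using the unit of $T_{\bbB}\dv\sup_{\bbB}$ and monotonicity. For the counit $T_{\bbA}\circ\sup_{\bbA}\leq 1_{\PA}$, the $\sup$-preservation of $F$ gives $F\circ\sup_{\bbA}=\sup_{\bbB}\circ F^{\ra}$, hence
$$T_{\bbA}\circ\sup_{\bbA}=G^{\ra}\circ T_{\bbB}\circ\sup_{\bbB}\circ F^{\ra}\leq G^{\ra}\circ F^{\ra}=(GF)^{\ra}=(1_{\bbA})^{\ra}=1_{\PA},$$
using the counit of $T_{\bbB}\dv\sup_{\bbB}$ and the 2-functoriality of $(-)^{\ra}$ coming from \eqref{Fra_def} and Proposition \ref{star_graph_adjoint}.

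There is essentially no obstacle here beyond confirming the basic bookkeeping: that the contravariant cograph assignment $(-)^{\nat}$ combines with the contravariant $(-)^{\ast}$ to yield a covariant $(-)^{\ra}$ sending identities to identities, so that the equality $GF=1_{\bbA}$ lifts to $G^{\ra}F^{\ra}=1_{\PA}$. Once this is in place, the two inequalities above immediately establish $T_{\bbA}\dv\sup_{\bbA}$, so $\bbA$ is (ccd).
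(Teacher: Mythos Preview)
Your proof is correct and essentially identical to the paper's: with the roles of $\bbA$ and $\bbB$ swapped relative to the paper's notation, you define the candidate left adjoint as $G^{\ra}\circ T_{\bbB}\circ F$ and verify the same two triangle inequalities using $\sup$-preservation of $F$, $G$ (via Proposition~\ref{la_sup_preserving}) and the unit/counit of $T_{\bbB}\dv\sup_{\bbB}$. The only extra detail you spell out---the 2-functoriality of $(-)^{\ra}$ yielding $(GF)^{\ra}=G^{\ra}F^{\ra}$---is used implicitly in the paper as well.
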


\begin{proof}
Let $\bbA$ be a (ccd) $\CQ$-category with $T_A\dv\sup_{\bbA}:\PA\lra\bbA$. If a pair of left adjoint $\CQ$-functors $\bbB\two/->`<-/^F_G\bbA$ satisfies $GF=1_{\bbB}$, in order to prove that $\bbB$ is (ccd), it suffices to verify $G^{\ra}T_{\bbA}F\dv\sup_{\bbB}:\PB\lra\bbB$. This is easy since from Proposition \ref{la_sup_preserving}(ii) one immediately has
\begin{align*}
&{\sup}_{\bbB}G^{\ra}T_{\bbA}F=G{\sup}_{\bbA}T_{\bbA}F\geq GF=1_{\bbB},\\
&G^{\ra}T_{\bbA}F{\sup}_{\bbB}=G^{\ra}T_{\bbA}{\sup}_{\bbA}F^{\ra}\leq G^{\ra}F^{\ra}=(GF)^{\ra}=1_{\PB},
\end{align*}
and the conclusion thus follows.
\end{proof}

\begin{prop} \label{idempotent_ccd}
If $\theta:\bbA\oto\bbA$ is an idempotent $\CQ$-distributor, then $\CK\theta$ is a (ccd) $\CQ$-category.
\end{prop}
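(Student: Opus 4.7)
My plan is to realize $\CK\theta$ as a retract of $\PA$ in $\QSup$; since $\PA$ is (ccd) by Example~\ref{PA_ccd}, Proposition~\ref{retract_ccd} will then yield the conclusion.

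For the retraction I take $G:=\overline{\theta_*\theta^*}:\PA\lra\CK\theta$, the codomain restriction of the monad $\theta_*\theta^*$; by Proposition~\ref{monad_reflective}(1) this $G$ is left adjoint in $\QCat$ to the inclusion $J:\CK\theta\hookrightarrow\PA$, hence sup-preserving. The obvious candidate for the section, $J$ itself, is only a right adjoint and $\CK\theta$ need not be closed under suprema in $\PA$, so I instead propose
$$F:\CK\theta\lra\PA,\qquad F\lam:=\theta^*\lam=\lam\circ\theta,$$
which is a $\CQ$-functor as the composite $\theta^*\circ J$. The equation $GF=1_{\CK\theta}$ is immediate from $\theta\circ\theta=\theta$: for $\lam\in\CK\theta$,
$$GF\lam=\theta_*\theta^*(\lam\circ\theta)=(\lam\circ\theta\circ\theta)\lda\theta=(\lam\circ\theta)\lda\theta=\theta_*\theta^*\lam=\lam.$$

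To check that $F$ is sup-preserving I would use two ingredients: first, the identity $\theta^*\theta_*\theta^*=\theta^*$, which follows from the triangle identities for the adjunction $\theta^*\dv\theta_*$ collapsing to equalities in the order-enriched 2-category $\QCat$ (whisker $1_{\PA}\leq\theta_*\theta^*$ on the left by $\theta^*$, and $\theta^*\theta_*\leq 1_{\PA}$ on the right by $\theta^*$); second, the formula $\sup_{\CK\theta}\Phi=\theta_*\theta^*\sup_{\PA}(\Phi\circ J^{\nat})$ from \eqref{tensor_closure_system}, together with the sup-preservation of $\theta^*:\PA\lra\PA$ itself (as a left adjoint in the Kan adjunction). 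Chaining these yields, for $\Phi\in\CP\CK\theta$,
$$F\sup_{\CK\theta}\Phi=\theta^*\theta_*\theta^*\sup_{\PA}(\Phi\circ J^{\nat})=\theta^*\sup_{\PA}(\Phi\circ J^{\nat})=\sup_{\PA}(\theta^*)^{\ra}(\Phi\circ J^{\nat})=\sup_{\PA}F^{\ra}\Phi.$$

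The main conceptual obstacle is realizing that the obvious pair $(J,G)$ is \emph{not} a retract in $\QSup$, precisely because $J$ fails to preserve suprema, and that the correct sup-preserving section must instead be obtained by ``twisting'' by $\theta^*$. Both $GF=1_{\CK\theta}$ and the sup-preservation of $F$ then rest crucially on the idempotency of $\theta$.
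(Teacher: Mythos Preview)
Your proof is correct, but the paper takes a slicker route by passing to the isomorphic $\CQ$-category $\FK\theta=\Fix(\theta^*\theta_*)$ instead of working directly with $\CK\theta=\Fix(\theta_*\theta^*)$. Because $\theta^*\theta_*$ is a $\CQ$-\emph{comonad}, Proposition~\ref{monad_reflective}(1) makes the inclusion $J:\FK\theta\hookrightarrow\PA$ itself a \emph{left} adjoint; pairing it with the codomain restriction $\theta^*:\PA\lra\FK\theta$ (a left adjoint since $\theta^*\dv\theta_*$ restricts, with right adjoint $\theta_*J$) gives the retract $(\theta^*)J=1_{\FK\theta}$ in $\QSup$ immediately, with no further computation. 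Your choice to stay on the monad side forces you to replace the inclusion by the twisted section $F=\theta^*J$ and then verify sup-preservation by an explicit calculation; this works, but you could have shortened it by noting directly that $F\dv\overline{\theta_*}:\PA\lra\CK\theta$ (since $\PA(\theta^*\lam,\mu)=\PA(\lam,\theta_*\mu)=\CK\theta(\lam,\theta_*\mu)$, using $\theta_*\mu\in\CK\theta$). Both approaches hinge on the same idempotency, and both invoke Proposition~\ref{retract_ccd}; the paper's just avoids the detour by exploiting the comonad side where the inclusion is already in $\QSup$.
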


\begin{proof}
Since $\FK\theta\cong\CK\theta$, we show that $\FK\theta$ is (ccd). Indeed, the inclusion $\CQ$-functor $J:\FK\theta\lra\PA$ is a left adjoint in $\QCat$ by Proposition \ref{monad_reflective}(1), and so is the codomain restriction $\theta^*:\PA\lra\FK\theta$, whose composition $\theta^*J=1_{\FK\theta}$ since $\theta$ is idempotent. Thus the conclusion follows from Proposition \ref{retract_ccd}.
\end{proof}

\begin{thm} \label{Kphi_ccd}
If $\phi$ is a regular $\CQ$-distributor, then $\Kphi$ is a (ccd) $\CQ$-category. Conversely, every skeletal (ccd) $\CQ$-category is isomorphic to $\Kphi$ for some regular $\CQ$-distributor $\phi$.
\end{thm}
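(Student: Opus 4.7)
The plan is to prove each implication separately.

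For the forward direction, given a regular $\CQ$-distributor $\phi:\bbA\oto\bbB$, I would attach to $\phi$ the $\CQ$-distributor $\theta:=\ophi\circ\phi:\bbA\oto\bbA$, which is idempotent since regularity (Proposition \ref{regular_condition}) gives $\theta\circ\theta=\ophi\circ(\phi\circ\ophi\circ\phi)=\ophi\circ\phi=\theta$. By Proposition \ref{idempotent_ccd}, $\CK\theta$ and the isomorphic $\FK\theta$ are both (ccd); so it will suffice to prove $\FK\phi=\FK\theta$ as sub-$\CQ$-categories of $\PA$, since $\CK\phi\cong\FK\phi$ then inherits (ccd).

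The inclusion $\FK\theta\subseteq\FK\phi$ is automatic: by 2-functoriality of the Kan adjunction one has $\theta^*=\phi^*\ophi^*$ and $\theta_*=\ophi_*\phi_*$, and the counit $\ophi^*\ophi_*\leq 1_{\PB}$ of $\ophi^*\dv\ophi_*$ yields $\theta^*\theta_*=\phi^*\ophi^*\ophi_*\phi_*\leq\phi^*\phi_*$; every $\mu\in\FK\theta$ is thereby squeezed into $\FK\phi$ via $\mu=\theta^*\theta_*\mu\leq\phi^*\phi_*\mu\leq\mu$. The reverse inclusion $\FK\phi\subseteq\FK\theta$ is where regularity plays a decisive role. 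Applying $(-)^*$ to $\phi=\phi\circ\ophi\circ\phi$ gives $\phi^*=\phi^*\ophi^*\phi^*$, and for $\mu\in\FK\phi$ substitution then produces
\[\mu=\phi^*\phi_*\mu=(\phi^*\ophi^*\phi^*)\phi_*\mu=\phi^*\ophi^*(\phi^*\phi_*\mu)=\phi^*\ophi^*\mu=\theta^*\mu,\]
so $\mu$ is already fixed by $\theta^*$. The unit $1_{\PA}\leq\theta_*\theta^*$ of $\theta^*\dv\theta_*$ then gives $\theta_*\mu\geq\mu$, whence $\theta^*\theta_*\mu\geq\theta^*\mu=\mu$; the reverse inequality is the comonad inequality $\theta^*\theta_*\leq 1_{\PA}$, so $\mu\in\FK\theta$.

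For the converse direction, given a skeletal (ccd) $\CQ$-category $\bbX$, the strategy is to exhibit an idempotent (hence regular) $\CQ$-distributor $e$ with $\CK e\cong\bbX$. This realisation of $\bbX$ as the splitting of an idempotent $\CQ$-distributor is the content of Stubbe's theorem referenced in the paper's introduction (and taken up further in Section \ref{RQDist_QCD}); concretely, the (ccd) string $T\dv\sup\dv\sY:\bbX\lra\PX$ yields an idempotent $\CQ$-functor $\sY\circ\sup:\PX\lra\PX$ whose image is isomorphic to $\bbX$, and transferring this idempotent across to $\QDist$---via distributor data built from the ``way-below'' values $(Ty)(x)=\PX(\sY x,Ty)$ that encode the $\CQ$-enriched interpolation property characterising (ccd)---provides an idempotent $\CQ$-distributor $e$ whose Kan-adjunction fixed points are canonically isomorphic to $\bbX$.

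The principal obstacle is the reverse inclusion $\FK\phi\subseteq\FK\theta$ in the forward direction: the regularity identity $\phi=\phi\circ\ophi\circ\phi$ has to be converted into a fixed-point statement for the comonad $\theta^*\theta_*$, and the crucial step is to substitute $\phi^*\phi_*\mu=\mu$ into a \emph{middle} factor of the expression $\phi^*\ophi^*\phi^*\phi_*\mu$, after which standard unit/counit manipulations close the argument.
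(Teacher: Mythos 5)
Your forward direction is correct and is essentially the paper's own argument in mirror image: the paper works with the idempotent $\phi\circ\ophi:\bbB\oto\bbB$ and shows $\Kphi=\CK(\phi\circ\ophi)$ inside $\PB$, whereas you work with $\theta=\ophi\circ\phi:\bbA\oto\bbA$ and show $\FK\phi=\FK\theta$ inside $\PA$; both routes then invoke Proposition \ref{idempotent_ccd}. Your computations are sound: $\theta^*=\phi^*\ophi^*$ and $\theta_*=\ophi_*\phi_*$, the squeeze $\mu=\theta^*\theta_*\mu\leq\phi^*\phi_*\mu\leq\mu$ via the counit of $\ophi^*\dv\ophi_*$, and the substitution of $\phi^*=\phi^*\ophi^*\phi^*$ into the middle factor followed by the unit/counit argument all check out.

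The converse direction, however, has a genuine gap. You do identify the right candidate, namely the distributor $e:\bbX\oto\bbX$ with $e(x,y)=(Ty)(x)$, i.e.\ the $\CQ$-distributor whose transpose is $T_{\bbX}$ (this is exactly the paper's $\theta_{\bbX}$), but the two substantive claims are only asserted. First, idempotency of the $\CQ$-functor $\sY_{\bbX}\circ\sup_{\bbX}$ on $\PX$ does not transfer for free to the identity $e\circ e=e$ in $\QDist$; this is a genuine computation, which the paper carries out as $T_{\bbX}x=T_{\bbX}\sup_{\bbX}T_{\bbX}x=\sup_{\PX}T_{\bbX}^{\ra}T_{\bbX}x=(T_{\bbX}x)\circ T_{\bbX}^{\nat}\circ(\sY_{\bbX})_{\nat}=e(-,x)\circ e$, using $T_{\bbX}\dv\sup_{\bbX}$, Proposition \ref{la_sup_preserving}(ii), Proposition \ref{PA_PdA_sup}(3) and Equation \eqref{dist_Yoneda}; your mention of an ``interpolation property'' is precisely this identity, not a proof of it. Second, the claim $\CK e\cong\bbX$ is also unproved; the paper establishes it by showing $\CK e=\sIm\sY_{\bbX}$, which needs $\sup_{\bbX}T_{\bbX}=1_{\bbX}$ and a Yoneda-type computation of $e_*e^*\sY_{\bbX}x$. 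Outsourcing both points to Stubbe's equivalence is not circular in itself (it is an external result), but as written it is incomplete: you would still have to verify that Stubbe's correspondence sends an idempotent distributor $e$ to a $\CQ$-category isomorphic to the fixed-point category $\CK e$ of the Kan adjunction, i.e.\ that his equivalence agrees with $\CK$ on objects --- and that compatibility is exactly the kind of verification the paper performs here and in Section \ref{RQDist_QCD}, the latter of which depends on Theorem \ref{Kphi_ccd} itself and so cannot be quoted in its place.
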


\begin{proof}
Each regular $\CQ$-distributor $\phi:\bbA\oto\bbB$ induces an idempotent $\CQ$-distributor $\phi\circ\ophi:\bbB\oto\bbB$. To show that $\Kphi$ is (ccd), by Proposition \ref{idempotent_ccd} it suffices to prove $\Kphi=\CK(\phi\circ\ophi)$. On one hand, $\lam\in\Kphi$ implies
$$\lam=\phi_*\phi^*\lam=(\phi\circ\ophi\circ\phi)_*\phi^*\lam=(\phi\circ\ophi)_*\phi_*\phi^*\lam=(\phi\circ\ophi)_*\lam\in\CK(\phi\circ\ophi).$$
On the other hand, $\lam\in\CK(\phi\circ\ophi)$ implies $\lam=(\phi\circ\ophi)_*\lam'$ for some $\lam'\in\PB$, and consequently
$$\lam=(\phi\circ\ophi)_*\lam'=\phi_*(\ophi_*\lam')\in\Kphi.$$

Conversely, each skeletal (ccd) $\CQ$-category $\bbA$ induces a $\CQ$-distributor $\theta_{\bbA}:\bbA\oto\bbA$ whose transpose (see \eqref{tphi_def})
$$\widetilde{\theta_{\bbA}}= T_{\bbA}:\bbA\lra\PA$$
is the left adjoint of $\sup_{\bbA}:\PA\lra\bbA$. We show that $\theta_{\bbA}$ is a regular $\CQ$-distributor and $\bbA\cong\CK\theta_{\bbA}$.

First, $\theta_{\bbA}$ is idempotent, thus regular. Indeed,
\begin{align*}
\theta_{\bbA}(-,x)&=T_{\bbA}x&(\text{Equation \eqref{tphi_def}})\\
&=T_{\bbA}{\sup}_{\bbA}T_{\bbA}x&(T_{\bbA}\dv{\sup}_{\bbA})\\
&={\sup}_{\PA}T_{\bbA}^{\ra}T_{\bbA}x&(\text{Proposition \ref{la_sup_preserving}(ii)})\\
&=(T_{\bbA}x)\circ T_{\bbA}^{\nat}\circ(\sY_{\bbA})_{\nat}&(\text{\eqref{Fra_def} and Proposition \ref{PA_PdA_sup}(3)})\\
&=\theta_{\bbA}(-,x)\circ\theta_{\bbA}&(\text{Equation \eqref{dist_Yoneda}})
\end{align*}
for all $x\in\bbA_0$, showing that $\theta_{\bbA}\circ\theta_{\bbA}=\theta_{\bbA}$.

Second, $\bbA\cong\CK\theta_{\bbA}$. Since it is easy to see $\bbA\cong\sIm\sY_{\bbA}$, where $\sIm\sY_{\bbA}=\{\sY_{\bbA}x\mid x\in\bbA_0\}$ is a $\CQ$-subcategory of $\PA$, it remains to prove $\CK\theta_{\bbA}=\sIm\sY_{\bbA}$. On one hand, $\mu\in\CK\theta_{\bbA}$ implies $\mu=\mu'\lda\theta_{\bbA}$ for some $\mu'\in\PA$, and consequently
$$\mu=\mu'\lda\theta_{\bbA}=\PA(T_{\bbA}-,\mu')=\bbA(-,{\sup}_{\bbA}\mu')=\sY_{\bbA}{\sup}_{\bbA}\mu'\in\sIm\sY_{\bbA}.$$
On the other hand, $\sY_{\bbA}x\in\CK\theta_{\bbA}$ for any $x\in\bbA_0$ since $T_{\bbA}\dv\sup_{\bbA}\dv\sY_{\bbA}$ implies
$$1_{\bbA}\leq{\sup}_{\bbA}T_{\bbA}={\sup}_{\bbA}T_{\bbA}{\sup}_{\bbA}\sY_{\bbA}\leq{\sup}_{\bbA}\sY_{\bbA}=1_{\bbA};$$
that is, $1_{\bbA}={\sup}_{\bbA}T_{\bbA}$. It follows that
$$(\theta_{\bbA})_*(\theta_{\bbA})^*\sY_{\bbA}x=\theta_{\bbA}(-,x)\lda\theta_{\bbA}=\PA(T_{\bbA}-,T_{\bbA}x)=\bbA(-,{\sup}_{\bbA}T_{\bbA}x)=\bbA(-,x)=\sY_{\bbA}x,$$
which completes the proof.
\end{proof}

\section{$\RQDist$ is dually equivalent to $\QCD$} \label{RQDist_QCD}

In this section we show that Theorem \ref{Kphi_ccd} can be enhanced to a (dual) equivalence of categories, with regular $\CQ$-distributors and (ccd) $\CQ$-categories as objects, respectively. To this end, first we establish the (contravariant) functoriality of the assignment $\phi\mapsto\Kphi$ by sending each commutative square
$$\bfig
\square<500,400>[\bbA`\bbA'`\bbB`\bbB';\zeta`\phi`\psi`\eta]
\place(250,0)[\circ] \place(250,400)[\circ] \place(0,200)[\circ] \place(500,200)[\circ]
\efig$$
in $\QDist$ to the left adjoint $\CQ$-functor
$$\CK(\zeta,\eta):=(\Kpsi\ \to/^(->/\PB'\to^{\eta^*}\PB\to^{\phi_*\phi^*}\Kphi).$$

\begin{prop} \label{K_functorial}
$\CK:\Arr(\QDist)^{\op}\lra\QSup$ is a quantaloid homomorphism.
\end{prop}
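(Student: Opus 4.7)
The plan is to verify three properties of $\CK$: that each $\CK(\zeta,\eta)$ is a sup-preserving $\CQ$-functor, that $\CK$ is (contravariantly) functorial, and that $\CK$ preserves joins on each hom-set of $\Arr(\QDist)$.

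For the first, each of the three factors in the defining composite $\Kpsi\hookrightarrow\PB'\to\PB\to\Kphi$ is already a $\CQ$-functor, so the content lies in sup-preservation. Rather than compute suprema in $\Kphi$ directly, I would exhibit an explicit right adjoint in $\QCat$ and invoke Proposition \ref{la_sup_preserving}. The natural candidate is the restriction $\eta_*|_{\Kphi}:\Kphi\lra\Kpsi$, and the crucial step is to verify that $\eta_*$ actually maps $\Kphi$ into $\Kpsi$. This is where the commutativity $\eta\circ\phi=\psi\circ\zeta$ enters: for $\mu\in\Kphi$, writing $\mu=(\mu\circ\phi)\lda\phi$ yields
$$\eta_*\mu=\mu\lda\eta=((\mu\circ\phi)\lda\phi)\lda\eta=(\mu\circ\phi)\lda(\psi\circ\zeta)=((\mu\circ\phi)\lda\zeta)\lda\psi,$$
manifestly in $\sIm\psi_*=\Kpsi$. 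The $\CQ$-adjunction $\CK(\zeta,\eta)\dv\eta_*|_{\Kphi}$ then follows by chaining the Kan adjunction $\eta^*\dv\eta_*$ with the reflection adjunction $\phi_*\phi^*\dv(\Kphi\hookrightarrow\PB)$ supplied by Proposition \ref{monad_reflective}(1), using that $\Kphi$ and $\Kpsi$ inherit their hom-arrows from $\PB$ and $\PB'$.

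For functoriality, identity preservation is immediate from $1_{\bbB}^*=1_{\PB}$ and the fixed-point condition on $\Kphi$. For composition of squares $(\zeta,\eta):\phi\to\psi$ and $(\zeta',\eta'):\psi\to\chi$, the required equality $\CK(\zeta'\zeta,\eta'\eta)=\CK(\zeta,\eta)\circ\CK(\zeta',\eta')$ reduces to the identity $\phi^*\eta^*\psi_*\psi^*=\phi^*\eta^*$ on $\PB'$. This follows from the elementary equality $(\psi_*\psi^*\tau)\circ\psi=\tau\circ\psi$ (combining the unit and counit of $-\circ\psi\dv-\lda\psi$) together with the square's commutativity:
$$\phi^*\eta^*\psi_*\psi^*\tau=(\psi_*\psi^*\tau)\circ\eta\circ\phi=(\psi_*\psi^*\tau)\circ\psi\circ\zeta=\tau\circ\psi\circ\zeta=\tau\circ\eta\circ\phi=\phi^*\eta^*\tau.$$

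For join preservation on homs, note that $\bv_i(\zeta_i,\eta_i)=(\bv\zeta_i,\bv\eta_i)$ is computed componentwise since composition distributes over joins in $\QDist$. Then for each $\lam\in\Kpsi$, a short calculation using that $\lam\circ-$ preserves joins, that $\phi^*$ preserves joins as a left adjoint, and that the reflection $\phi_*\phi^*:\PB\lra\Kphi$ preserves joins (as a left adjoint, by Proposition \ref{monad_reflective}(1)) yields $\CK(\bv(\zeta_i,\eta_i))\lam=\bv_i\CK(\zeta_i,\eta_i)\lam$, the right-hand join being taken in $\Kphi$ and matching the pointwise join in $\QSup(\Kpsi,\Kphi)$. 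The main obstacle throughout is the closure computation $\eta_*(\Kphi)\subseteq\Kpsi$ in the sup-preservation step; once this is established, the remainder is formal manipulation of Kan and reflection adjunctions.
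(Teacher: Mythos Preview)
Your proposal is correct and follows the same overall architecture as the paper: exhibit $\eta_*|_{\Kphi}$ as right adjoint to $\CK(\zeta,\eta)$, then check functoriality and join preservation. The differences are local. The paper organizes the argument around a single key inequality $\eta^*\psi_*\psi^*\leq\phi_*\phi^*\eta^*$ (its Step~1), which it then reuses both to show that $\eta_*$ restricts to $\Kphi\to\Kpsi$ and to prove functoriality via two inequalities. You bypass this lemma entirely: your computation $\eta_*\mu=((\mu\circ\phi)\lda\zeta)\lda\psi$ using the identity $(A\lda B)\lda C=A\lda(C\circ B)$ and $\eta\circ\phi=\psi\circ\zeta$ is a cleaner, one-line proof of the restriction, and your functoriality argument establishes the \emph{equality} $\phi^*\eta^*\psi_*\psi^*=\phi^*\eta^*$ directly from $\psi^*\psi_*\psi^*=\psi^*$ and commutativity of the square, rather than sandwiching between two inequalities. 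Both routes are sound; yours is marginally more economical in that it avoids isolating the Step~1 inequality as a separate lemma, while the paper's version makes that inequality available for reuse (e.g.\ in the proof of Proposition~\ref{KD_faithful}).
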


\begin{proof}
{\bf Step 1.} $\eta^*\psi_*\psi^*\leq\phi_*\phi^*\eta^*$. Consider the following diagram:
$$\bfig
\square|alrb|[\PB'`\PA'`\PB`\PA;\psi^*`\eta^*`\zeta^*`\phi^*]
\square(500,0)/>``>`>/[\PA'`\PB'`\PA`\PB;\psi_*``\eta^*`\phi_*]
\place(750,280)[\twoar(-1,-1)]
\efig$$
Note that the commutativity of the left square follows trivially from the functoriality of $(-)^*:(\QDist)^{\op}\lra\QCat$, and thus it suffices to check $\eta^*\psi_*\leq\phi_*\zeta^*$. Indeed, for all $\mu'\in\PA'$,
\begin{align*}
\eta^*\psi_*\mu'&=(\mu'\lda\psi)\circ\eta\\
&\leq((\mu'\lda\psi)\circ\eta\circ\phi)\lda\phi\\
&=((\mu'\lda\psi)\circ\psi\circ\zeta)\lda\phi\\
&\leq(\mu'\circ\zeta)\lda\phi\\
&=\phi_*\zeta^*\mu',
\end{align*}
as desired.

{\bf Step 2.} $\CK(\zeta,\eta):\Kpsi\lra\Kphi$ is a left adjoint $\CQ$-functor. For this, note that $\eta_*:\PB\lra\PB'$ can be restricted as a $\CQ$-functor $\eta_*:\Kphi\lra\Kpsi$. Indeed, from $\eta^*\dv\eta_*:\PB\lra\PB'$ and Step 1 one has $\psi_*\psi^*\leq\eta_*\phi_*\phi^*\eta^*$, which implies
$$\psi_*\psi^*\eta_*\lam\leq\eta_*\phi_*\phi^*\eta^*\eta_*\lam\leq\eta_*\phi_*\phi^*\lam=\eta_*\lam,$$
and consequently $\eta_*\lam\in\Kpsi$ for all $\lam\in\Kphi$. Now it remains to prove $\CK(\zeta,\eta)\dv\eta_*:\Kphi\lra\Kpsi$. Since $\phi_*\phi^*$ is a $\CQ$-monad on $\PB$,  it holds that
$$\PB(\eta^*\lam',\lam)\leq\PB(\phi_*\phi^*\eta^*\lam',\phi_*\phi^*\lam)=\PB(\phi_*\phi^*\eta^*\lam',\lam)\leq\PB(\eta^*\lam',\lam)$$
for all $\lam'\in\Kpsi$, $\lam\in\Kphi$. Hence
$$\Kphi(\CK(\zeta,\eta)\lam',\lam)=\PB(\phi_*\phi^*\eta^*\lam',\lam)=\PB(\eta^*\lam',\lam)=\PB'(\lam',\eta_*\lam)=\Kpsi(\lam',\eta_*\lam),$$
showing that $\CK(\zeta,\eta)\dv\eta_*:\Kphi\lra\Kpsi$.

{\bf Step 3.} $\CK:\Arr(\QDist)^{\op}\lra\QSup$ is a functor. For this one must prove
$$\CK(\zeta,\eta)\CK(\zeta',\eta')=\CK(\zeta'\circ\zeta,\eta'\circ\eta):\CK\xi\lra\Kphi$$
for any arrows $(\zeta,\eta):\phi\lra\psi$ and $(\zeta',\eta'):\psi\lra\xi$ in $\Arr(\QDist)$. It suffices to check that
\begin{equation} \label{phi_star_psi_eta}
\phi_*\phi^*\eta^*\psi_*\psi^*\eta'^*=\phi_*\phi^*\eta^*\eta'^*.
\end{equation}
On one hand, Step 1 implies $\phi_*\phi^*\eta^*\psi_*\psi^*\eta'^*\leq\phi_*\phi^*\phi_*\phi^*\eta^*\eta'^*=\phi_*\phi^*\eta^*\eta'^*$ since $\phi_*\phi^*$ is idempotent. On the other hand, $\phi_*\phi^*\eta^*\eta'^*\leq\phi_*\phi^*\eta^*\psi_*\psi^*\eta'^*$ holds trivially since $1_{\PB'}\leq\psi_*\psi^*$.

{\bf Step 4.} $\CK:\Arr(\QDist)^{\op}\lra\QSup$ is a quantaloid homomorphism. To show that $\CK$ preserves joins of arrows in $\Arr(\QDist)$, let $\{(\zeta_i,\eta_i)\}_{i\in I}$ be a family of arrows from $\phi:\bbA\oto\bbB$ to $\psi:\bbA'\oto\bbB'$ in $\Arr(\QDist)$, and one needs to verify
$$\CK\Big(\bv_{i\in I}\zeta_i,\bv_{i\in I}\eta_i\Big)=\bigsqcup_{i\in I}\CK(\zeta_i,\eta_i):\Kpsi\lra\Kphi,$$
where $\bigsqcup$ denotes the pointwise join in $\QSup(\Kpsi,\Kphi)$ inherited from $\Kphi$. Indeed, since $\phi_*\phi^*:\PB\lra\Kphi$ is a left adjoint $\CQ$-functor by Proposition \ref{monad_reflective}(1), one has
\begin{align*}
\CK\Big(\bv_{i\in I}\zeta_i,\bv_{i\in I}\eta_i\Big)\lam'&=\phi_*\phi^*\Big(\bv_{i\in I}\eta_i\Big)^*\lam'\\
&=\phi_*\phi^*\Big(\lam'\circ\bv_{i\in I}\eta_i\Big)\\
&=\phi_*\phi^*\Big(\bv_{i\in I}\lam'\circ\eta_i\Big)\\
&=\bigsqcup_{i\in I}\phi_*\phi^*(\lam'\circ\eta_i)&(\text{Proposition \ref{la_sup_preserving}(iii)})\\
&=\bigsqcup_{i\in I}\phi_*\phi^*\eta_i^*\lam'\\
&=\bigsqcup_{i\in I}\CK(\zeta_i,\eta_i)\lam'
\end{align*}
for all $\lam'\in\Kpsi$. This completes the proof.
\end{proof}

\begin{rem}
A \emph{Chu transform} (called an \emph{infomorphism} in \cite{Shen2014,Shen2013a})
$$(F,G):(\phi:\bbA\oto\bbB)\lra(\psi:\bbA'\oto\bbB')$$
between $\CQ$-distributors is a pair of $\CQ$-functors $F:\bbA\lra\bbA'$ and $G:\bbB'\lra\bbB$ such that $\psi(F-,-)=\phi(-,G-)$; or equivalently, $\psi\circ F_{\nat}=G^{\nat}\circ\phi$. $\CQ$-distributors and Chu transforms constitute a category $\QChu$ (denoted by $\QInfo$ in \cite{Shen2014,Shen2013a}), and one has a natural functor
$$(\Box_{\nat},\Box^{\nat}):\QChu\lra\Arr(\QDist),\quad(F,G)\mapsto(F_{\nat},G^{\nat}).$$
It should be pointed out that the composite of $\CK:\Arr(\QDist)^{\op}\lra\QSup$ with the functor $(\Box_{\nat},\Box^{\nat})^{\op}:(\QChu)^{\op}\lra\Arr(\QDist)^{\op}$ is exactly the functor $\CK:(\QChu)^{\op}\lra\QSup$ obtained in \cite{Shen2013a}. So, the functor $\CK$ given in Proposition \ref{K_functorial} is an extension of the functor $\CK:(\QChu)^{\op}\lra\QSup$ in \cite{Shen2013a}.
\end{rem}

\begin{prop} \label{KD_faithful}
For arrows $(\zeta,\eta),(\zeta',\eta'):(\phi:\bbA\oto\bbB)\lra(\psi:\bbA'\oto\bbB')$ in $\Arr(\QDist)$,
$$(\zeta,\eta)\sim(\zeta',\eta')\iff\CK(\zeta,\eta)=\CK(\zeta',\eta'):\Kpsi\lra\Kphi.$$
Therefore, $\CK$ factors uniquely through the quotient homomorphism $\Arr(\QDist)^{\op}\lra\BD(\QDist)^{\op}$ via a unique faithful quantaloid homomorphism $\KD:\BD(\QDist)^{\op}\lra\QSup$.
$$\bfig
\qtriangle/->`->`-->/<1500,500>[\Arr(\QDist)^{\op}`\BD(\QDist)^{\op}`\QSup;`\CK`\KD]
\efig$$
\end{prop}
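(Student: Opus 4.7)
The plan is to establish the biconditional directly; the factorization as a unique faithful quantaloid homomorphism $\KD$ will then be formal. The key observation is that for every $\lam'\in\Kpsi$,
$$\CK(\zeta,\eta)\lam'=\phi_*\phi^*\eta^*\lam'=(\lam'\circ\eta\circ\phi)\lda\phi,$$
which depends on $(\zeta,\eta)$ only through the diagonal $\eta\circ\phi=\psi\circ\zeta$; hence $(\zeta,\eta)\sim(\zeta',\eta')$ yields $\CK(\zeta,\eta)=\CK(\zeta',\eta')$ at once.

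For the converse I would descend the assumed equality $\CK(\zeta,\eta)=\CK(\zeta',\eta')$ on $\Kpsi$ down to the equality $\eta\circ\phi=\eta'\circ\phi$ in three steps. First, extend it to $\phi_*\phi^*\eta^*=\phi_*\phi^*\eta'^*$ on all of $\PB'$, using the identity $\phi_*\phi^*\eta^*\psi_*\psi^*=\phi_*\phi^*\eta^*$: the inequality $\geq$ is immediate from $1\leq\psi_*\psi^*$, and $\leq$ follows by applying $\phi_*\phi^*$ to the inequality $\eta^*\psi_*\psi^*\leq\phi_*\phi^*\eta^*$ from Step 1 of the proof of Proposition \ref{K_functorial} and invoking idempotence of $\phi_*\phi^*$. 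Since $\psi_*\psi^*\mu'\in\Kpsi$ for every $\mu'\in\PB'$, the hypothesis applied at $\psi_*\psi^*\mu'$ and this identity give $\phi_*\phi^*\eta^*\mu'=\phi_*\phi^*\eta'^*\mu'$. Second, descend further to $\phi^*\eta^*=\phi^*\eta'^*$: the $\CQ$-functors $\phi^*:\Kphi\lra\Rphi$ and $\phi_*:\Rphi\lra\Kphi$ are mutually inverse, so $\phi_*$ is injective on $\Rphi=\sIm\phi^*$; as both $\phi^*\eta^*\mu'$ and $\phi^*\eta'^*\mu'$ lie in $\Rphi$, agreement of their $\phi_*$-images forces them to coincide. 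Third, evaluate at Yoneda presheaves: for each $y\in\bbB'_0$, $\eta^*\sY_{\bbB'}y=\eta(-,y)$ and $\phi^*\eta^*\sY_{\bbB'}y=(\eta\circ\phi)(-,y)$ (and similarly for $\eta'$), hence $\eta\circ\phi=\eta'\circ\phi$; combined with $\psi\circ\zeta=\eta\circ\phi=\eta'\circ\phi=\psi\circ\zeta'$, this is exactly $(\zeta,\eta)\sim(\zeta',\eta')$.

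The factorization is now formal: by the forward direction $\CK$ identifies $\sim$-equivalent arrows, so the universal property of the quotient quantaloid $\BD(\QDist)^{\op}$ produces a unique $\KD:\BD(\QDist)^{\op}\lra\QSup$ making the triangle commute; faithfulness of $\KD$ is precisely the converse direction just proved, and $\KD$ inherits the structure of a quantaloid homomorphism from $\CK$ because the quotient map preserves joins and is surjective on hom-sets. The main obstacle is spotting the extension identity $\phi_*\phi^*\eta^*\psi_*\psi^*=\phi_*\phi^*\eta^*$ that lifts the hypothesis off $\Kpsi$ onto all of $\PB'$; once it is in hand, the injectivity of $\phi_*$ on $\Rphi$ and the Yoneda evaluation that completes the descent are routine.
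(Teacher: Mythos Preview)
Your proposal is correct and follows essentially the same approach as the paper: both arguments hinge on the identity $\phi_*\phi^*\eta^*\psi_*\psi^*=\phi_*\phi^*\eta^*$ (Equation \eqref{phi_star_psi_eta}) to lift the hypothesis off $\Kpsi$, the adjunction identity $\phi^*\phi_*\phi^*=\phi^*$ to strip $\phi_*$, and evaluation at Yoneda presheaves $\sY_{\bbB'}y'$ to recover $\eta\circ\phi=\eta'\circ\phi$. The paper packages these three moves into a single chain of equalities, whereas you separate them into three explicit steps, but the content is the same.
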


\begin{proof}
If $(\zeta,\eta)\sim(\zeta',\eta')$, then $\eta\circ\phi=\eta'\circ\phi$, and thus the functoriality of $(-)^*:(\QDist)^{\op}\lra\QCat$ implies
$$\CK(\zeta,\eta)\lam'=\phi_*\phi^*\eta^*\lam'=\phi_*\phi^*\eta'^*\lam'=\CK(\zeta',\eta')\lam'$$
for all $\lam'\in\Kpsi$. Conversely, suppose $\CK(\zeta,\eta)=\CK(\zeta',\eta')$, one needs to show that $\eta\circ\phi=\eta'\circ\phi$. Indeed,
\begin{align*}
\eta(-,y')\circ\phi&=\phi^*\eta^*\sY_{\bbB'}y'&(\text{Equation \eqref{tphi_def}})\\
&=\phi^*\phi_*\phi^*\eta^*\sY_{\bbB'}y'&(\phi^*\dv\phi_*)\\
&=\phi^*\phi_*\phi^*\eta^*\psi_*\psi^*\sY_{\bbB'}y'&(\text{similar to the proof of Equation \eqref{phi_star_psi_eta}})\\
&=\phi^*\CK(\zeta,\eta)\psi_*\psi^*\sY_{\bbB'}y'\\
&=\phi^*\CK(\zeta',\eta')\psi_*\psi^*\sY_{\bbB'}y'&(\CK(\zeta,\eta)=\CK(\zeta',\eta'))\\
&=\eta'(-,y')\circ\phi&(\text{repeating the above steps})
\end{align*}
for all $y'\in\bbB'_0$, as desired.
\end{proof}

Let $\RArrQDist$ and $\QCD$ denote the full subquantaloids of $\Arr(\QDist)$ and $\QSup$, respectively, with objects restricting to regular $\CQ$-distributors and (ccd) $\CQ$-categories. Then $\CK$ sends objects in $\RArrQDist$ to those in $\QCD$ by Theorem \ref{Kphi_ccd}, and furthermore:

\begin{prop} \label{K_full_regular}
If $\psi$ is a regular $\CQ$-distributor, then the map
$$\CK:\Arr(\QDist)(\phi,\psi)\lra\QSup(\Kpsi,\Kphi)$$
is full. Hence, the restriction $\RArrQDist^{\op}\lra\QCD$ of $\CK:\Arr(\QDist)^{\op}\lra\QSup$ is a full quantaloid homomorphism.
\end{prop}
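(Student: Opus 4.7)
The plan is, given a sup-preserving $\CQ$-functor $F:\Kpsi\to\Kphi$, to construct $(\zeta,\eta):\phi\to\psi$ in $\Arr(\QDist)$ with $\CK(\zeta,\eta)=F$. The form of $\eta$ is forced by the requirement $\phi_*\phi^*\eta^*|_{\Kpsi}=F$ together with Yoneda density, so I define $\eta:\bbB\oto\bbB'$ by its transpose
$$\tilde\eta y':=F(\psi_*\psi^*\sY_{\bbB'}y')\in\Kphi\subseteq\PB\qquad(y'\in\bbB'_0),$$
and invoke the regularity of $\psi$ to set $\zeta:=\opsi\circ\eta\circ\phi$.

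The identity $\CK(\zeta,\eta)=F$ is the soft half. Both $\phi_*\phi^*\eta^*$ and $F\psi_*\psi^*$ are sup-preserving $\CQ$-functors $\PB'\to\Kphi$: $\eta^*$ is a Kan left adjoint, $\phi^*$ is sup-preserving, and Proposition \ref{monad_reflective}(1) makes $\phi_*\phi^*:\PB\to\Kphi$ and $\psi_*\psi^*:\PB'\to\Kpsi$ left adjoints to the respective inclusions. By construction they agree on every $\sY_{\bbB'}y'$, because $\tilde\eta y'\in\Kphi$ is fixed by $\phi_*\phi^*$, and Yoneda density (Lemma \ref{Yoneda_lemma}) forces their equality throughout $\PB'$; restricting to $\Kpsi$, where $\psi_*\psi^*$ acts as the identity, yields $\CK(\zeta,\eta)=F$. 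The same style of argument, applied to the sup-preserving $\CQ$-functors $\phi^*\eta^*,\phi^*F\psi_*\psi^*:\PB'\to\PA$ (noting that $\phi^*|_{\Kphi}:\Kphi\to\PA$ is sup-preserving by the triangle identity $\phi^*\phi_*\phi^*=\phi^*$), simultaneously furnishes the auxiliary identity $\phi^*\eta^*=\phi^*F\psi_*\psi^*$ on all of $\PB'$.

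The main obstacle is to check that $(\zeta,\eta)$ really is an arrow of $\Arr(\QDist)$, i.e., $\psi\circ\zeta=\eta\circ\phi$, which unfolds to $\psi\circ\opsi\circ\eta\circ\phi=\eta\circ\phi$. Since $(-)^*:(\QDist)^{\op}\to\QCat$ is faithful (distributors are determined by their values on Yoneda), this reduces via Yoneda density to verifying
$$\phi^*\eta^*\opsi^*\psi^*\sY_{\bbB'}y'=\phi^*\eta^*\sY_{\bbB'}y'$$
for every $y'\in\bbB'_0$. Substituting $\phi^*\eta^*=\phi^*F\psi_*\psi^*$ from the previous paragraph, the left-hand side becomes $\phi^*F\psi_*(\psi^*\opsi^*\psi^*\sY_{\bbB'}y')$, which collapses onto the right-hand side via the single identity
$$\psi^*\opsi^*\psi^*=(\psi\circ\opsi\circ\psi)^*=\psi^*.$$
This last equation is the functorial shadow of Proposition \ref{regular_condition}(ii), and it is precisely here that regularity of $\psi$ is indispensable; without it the candidate $\zeta=\opsi\circ\eta\circ\phi$ cannot be shown to make the required square commute. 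The concluding assertion about the restriction $\RArrQDist^{\op}\to\QCD$ is then immediate: it is a quantaloid homomorphism by Proposition \ref{K_functorial}, well defined on objects by Theorem \ref{Kphi_ccd}, and full by what has just been proved.
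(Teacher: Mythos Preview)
Your proof is correct, and the overall strategy---defining the candidate $\eta$ via the transpose $\tilde\eta y'=F\psi_*\psi^*\sY_{\bbB'}y'$ and setting $\zeta=\opsi\circ\eta\circ\phi$---is essentially the paper's. The difference lies in how the work is distributed. The paper introduces your $\eta$ under the name $\xi$ and then \emph{redefines} $\eta:=\psi\circ\opsi\circ\xi$; with that choice the square $\psi\circ\zeta=\psi\circ\opsi\circ\xi\circ\phi=\eta\circ\phi$ commutes by definition, so no argument is needed there, while the verification $\CK(\zeta,\eta)=F$ becomes a longer explicit computation with tensors and joins (Propositions~\ref{PA_PdA_sup} and~\ref{la_sup_preserving}), with regularity of $\psi$ invoked at the very end. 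You keep $\eta=\xi$, which makes $\CK(\zeta,\eta)=F$ almost immediate via Yoneda density and sup-preservation, but shifts the burden to checking $\psi\circ\opsi\circ\eta\circ\phi=\eta\circ\phi$; your reduction to $\psi^*\opsi^*\psi^*=\psi^*$ via the auxiliary identity $\phi^*\eta^*=\phi^*F\psi_*\psi^*$ is clean and is where regularity enters for you. Either route works; the paper's buys a trivial square at the cost of a heavier final computation, while yours trades a slightly more delicate square-verification for a conceptually lighter identification of $\CK(\zeta,\eta)$ with $F$. One small remark: your justification that $\phi^*|_{\Kphi}:\Kphi\to\PA$ is sup-preserving is correct but terse; it may help the reader to note explicitly that $\phi^*|_{\Kphi}$ factors as the isomorphism $\Kphi\cong\Rphi$ followed by the inclusion $\Rphi\hookrightarrow\PA$, the latter being a left adjoint by Proposition~\ref{monad_reflective}(1) since $\phi^*\phi_*$ is a comonad.
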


\begin{proof}
Given a left adjoint $\CQ$-functor $F:\Kpsi\lra\Kphi$, define a $\CQ$-distributor $\xi:\bbB\oto\bbB'$ through its transpose as
\begin{equation} \label{txi_def}
\txi:=(\bbB'\to^{\sY_{\bbB'}}\PB'\to^{\psi_*\psi^*}\Kpsi\to^F\Kphi\ \to/^(->/\PB).
\end{equation}
Since the square
$$\bfig
\square<1000,500>[\bbA`\bbA'`\bbB`\bbB';\zeta:=\opsi\circ\xi\circ\phi`\phi`\psi`\eta:=\psi\circ\opsi\circ\xi]
\place(500,0)[\circ] \place(0,250)[\circ] \place(1000,250)[\circ] \place(500,500)[\circ]
\efig$$
is clearly commutative, i.e., $(\zeta,\eta)\in\Arr(\QDist)(\phi,\psi)$, it now remains to prove $\CK(\zeta,\eta)=F$.

First, it follows from Proposition \ref{PA_PdA_sup}(1) and Equation \eqref{tensor_closure_system} that tensors in $\Kphi$ are given by
\begin{equation} \label{tensor_Kphi}
f\otimes\lam=\phi_*\phi^*(f\circ\mu)
\end{equation}
for all $\mu\in\Kphi$, $f\in\CP(t\mu)$.

Second, $\phi_*\phi^*:\PA\lra\Kphi$ and $\psi_*\psi^*:\PA'\lra\Kpsi$ are both left adjoint $\CQ$-functors (see Proposition \ref{monad_reflective}(1)), and thus so is the composite $F\psi_*\psi^*:\PA'\lra\Kphi$. Hence, for all $\lam'\in\Kpsi$ one has
\begin{align*}
\CK(\zeta,\eta)\lam'&=\phi_*\phi^*\eta^*\lam'\\
&=\phi_*\phi^*(\lam'\circ\psi\circ\opsi\circ\xi)&(\eta=\psi\circ\opsi\circ\xi)\\
&=\phi_*\phi^*\Big(\bv_{y'\in\bbB'_0}(\lam'\circ\psi\circ\opsi)(y')\circ\txi y'\Big)\\
&=\phi_*\phi^*\Big(\bv_{y'\in\bbB'_0}(\lam'\circ\psi\circ\opsi)(y')\circ(F\psi_*\psi^*\sY_{\bbB'}y')\Big)&(\text{Equation \eqref{txi_def}})\\
&=\bigsqcup_{y'\in\bbB'_0}\phi_*\phi^*((\lam'\circ\psi\circ\opsi)(y')\circ(F\psi_*\psi^*\sY_{\bbB'}y'))&(\text{Proposition \ref{la_sup_preserving}(iii)})\\
&=\bigsqcup_{y'\in\bbB'_0}(\lam'\circ\psi\circ\opsi)(y')\otimes(F\psi_*\psi^*\sY_{\bbB'}y')&(\text{Equation \eqref{tensor_Kphi}})\\
&=F\psi_*\psi^*\Big(\bv_{y'\in\bbB'_0}(\lam'\circ\psi\circ\opsi)(y')\circ(\sY_{\bbB'}y')\Big)&(\text{Proposition \ref{la_sup_preserving}(iii)})\\
&=F\psi_*\psi^*(\lam'\circ\psi\circ\opsi)\\
&=F\psi_*(\psi\circ\opsi\circ\psi)^*\lam'\\
&=F\psi_*\psi^*\lam'&(\psi\ \text{is regular})\\
&=F\lam',&(\lam'\in\Kpsi)
\end{align*}
where $\bv$ and $\bigsqcup$ respectively denote the underlying joins in $\PB$ and $\Kphi$, completing the proof.
\end{proof}

With $\RQDist$ denoting the full subquantaloid of $\BD(\QDist)$ consisting of regular $\CQ$-distributors, the combination of Propositions \ref{KD_faithful} and \ref{K_full_regular} shows that the restriction of $\KD:\BD(\QDist)^{\op}\lra\QSup$ on $\RQDist^{\op}$, which we denote by
$$\KR:\RQDist^{\op}\lra\QCD,$$
is a fully faithful quantaloid homomorphism. It is moreover essentially surjective as the second part of Theorem \ref{Kphi_ccd} claims, and thus the following conclusion arises as an immediate consequence:

\begin{thm} \label{KR_equiv}
$\KR:\RQDist^{\op}\lra\QCD$ is an equivalence of quantaloids. Hence, $\RQDist$ and $\QCD$ are dually equivalent quantaloids.
\end{thm}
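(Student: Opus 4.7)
The plan is to package the three results just established --- faithfulness (Proposition \ref{KD_faithful}), fullness on regular objects (Proposition \ref{K_full_regular}), and essential surjectivity (the second half of Theorem \ref{Kphi_ccd}) --- into the standard criterion for a functor to be an equivalence, and then observe that the upgrade from an equivalence of ordinary categories to an equivalence of quantaloids is automatic.

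Concretely, I would proceed in three steps. First, faithfulness of $\KR$ is inherited directly from Proposition \ref{KD_faithful}, since $\RQDist^{\op}$ is a full subquantaloid of $\BD(\QDist)^{\op}$ on which $\KD$ is already faithful. Second, for fullness: given regular $\CQ$-distributors $\phi,\psi$ and a $\sup$-preserving $\CQ$-functor $F:\Kpsi\lra\Kphi$, Proposition \ref{K_full_regular} produces an arrow $(\zeta,\eta):\phi\lra\psi$ in $\Arr(\QDist)$ with $\CK(\zeta,\eta)=F$; its class in $\BD(\QDist)$ is then a preimage of $F$ under $\KR$. Third, for essential surjectivity: the second half of Theorem \ref{Kphi_ccd} produces, for each skeletal (ccd) $\CQ$-category $\bbA$, a regular (in fact idempotent) $\CQ$-distributor $\theta_{\bbA}:\bbA\oto\bbA$ together with an isomorphism $\bbA\cong\CK\theta_{\bbA}=\KR(\theta_{\bbA})$ in $\QCD$.

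Promoting this to an equivalence of quantaloids is painless: $\KR$ is already a quantaloid homomorphism (a restriction of $\KD$, whose $\Sup$-enrichment comes from Proposition \ref{K_functorial}), and any fully faithful, essentially surjective quantaloid homomorphism automatically admits a pseudo-inverse whose hom-components are the inverse order-isomorphisms on each hom-lattice, hence preserve local joins. The final clause, that $\RQDist$ and $\QCD$ are \emph{dually} equivalent, is then just an unpacking of $\RQDist^{\op}\simeq\QCD$. There is no substantive obstacle here, since all of the non-trivial content has been handled in Sections \ref{Regularity_ccd} and \ref{RQDist_QCD}; the only minor point worth flagging is that Proposition \ref{K_full_regular} is phrased at the level of $\Arr(\QDist)$, so one must use that the quotient $\Arr(\QDist)\lra\BD(\QDist)$ is the identity on objects and surjective on arrows before deducing fullness of $\KR$ on $\RQDist^{\op}$.
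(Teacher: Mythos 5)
Your proposal is correct and follows essentially the same route as the paper: the paper likewise obtains Theorem \ref{KR_equiv} as an immediate consequence of Proposition \ref{KD_faithful} (faithfulness), Proposition \ref{K_full_regular} combined with the quotient $\Arr(\QDist)\lra\BD(\QDist)$ (fullness), and the second half of Theorem \ref{Kphi_ccd} (essential surjectivity), with the passage to an equivalence of quantaloids treated as automatic. Your remark about lifting fullness from $\Arr(\QDist)$ through the object-fixing, arrow-surjective quotient is exactly the implicit step in the paper's argument.
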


\begin{rem}
The combination of Proposition \ref{RQ_equiv_IdmQ} and Theorem \ref{KR_equiv} in fact reproduces the dual equivalence
$$\BD(\QDist)_{\rm idm}^{\op}\simeq\QCD$$
which was revealed by Stubbe in \cite{Stubbe2007}; that is, the split-idempotent completion of $\QDist$ is dually equivalent to $\QCD$. In particular, the above equivalence reduces to the classical result
\begin{equation} \label{RWequiv}
\BD(\Rel)_{\rm idm}\simeq\Sup_{\rm ccd}
\end{equation}
of Rosebrugh and Wood by setting $\CQ={\bf 2}$ \cite{Rosebrugh1994}; just note that $\BD({\bf 2}\text{-}\Dist)_{\rm idm}$ is equivalent to its full subquantaloid $\BD(\Rel)_{\rm idm}$ and $\Sup_{\rm ccd}$ is self-dual. We also refer to Hofmann's work \cite{Hofmann2013} for another extensive study of the equivalence \eqref{RWequiv} in the context of $(\bbT,\sV)$-categories.
\end{rem}

We point out that one cannot establish Proposition \ref{K_full_regular} without assuming the regularity of $\psi$:

\begin{exmp} \label{K_not_full}
Let $\CQ=([0,\infty]^{\op},+)$ be Lawvere's quantale \cite{Lawvere1973} with the internal hom given by $x\ra y=\max\{0,y-x\}$\footnote{To avoid confusion, the symbols $\max$, $\leq$, etc. between (extended) real numbers always refer to the standard order, although the quantale $([0,\infty]^{\op},+)$ is equipped with the reverse order of real numbers.} for all $x,y\in Q$. A $\CQ$-distributor $\phi:\star\oto\star$ between singleton $\CQ$-categories is precisely an element $\phi\in[0,\infty]$, and it is not difficult to observe the following facts:
\begin{enumerate}[label={\rm(\arabic*)}]
\item $\phi$ is regular if and only if either $\phi=0$ or $\phi=\infty$.
\item $\Kphi=\CQ$ for all $\phi\in[0,\infty)$, where $\CQ$ is equipped the standard $\CQ$-category structure given by the internal hom $\ra$.
\end{enumerate}
We claim that the map
$$\CK:\Arr(\QDist)((0:\star\oto\star),(\psi:\star\oto\star))\lra\QSup(\CQ,\CQ)$$
is not full for any $\psi\in(0,\infty)$. Indeed, $\CK$ sends each $(\zeta,\eta):(0:\star\oto\star)\lra(\psi:\star\oto\star)$ to the left adjoint $\CQ$-functor
$$\eta^*:\CQ\lra\CQ,\quad x\mapsto\eta+x.$$
Taking any $\eta\in(0,\psi)$, $\eta^*:\CQ\lra\CQ$ defined as above is a left adjoint $\CQ$-functor, but there exists no $\zeta:\star\oto\star$ such that $(\zeta,\eta):(0:\star\oto\star)\lra(\psi:\star\oto\star)$ is an arrow in $\Arr(\QDist)$: one must have $\zeta=\eta-\psi$ to get a commutative square in $\QDist$ as illustrated below, which cannot be achieved when $\eta<\psi$.
$$\bfig
\square<800,500>[\star`\star`\star`\star;\zeta=\eta-\psi`0`\psi`\eta]
\place(400,0)[\circ] \place(400,500)[\circ] \place(0,250)[\circ] \place(800,250)[\circ]
\efig$$
\end{exmp}

\section{{\opccd} implies regularity} \label{opccd_regularity}

Theorem \ref{Kphi_ccd} asserts that $\Kphi$ is a (ccd) $\CQ$-category if $\phi$ is a regular $\CQ$-distributor, and the converse statement is known to be true when $\CQ={\bf 2}$ (see Theorem \ref{regular_cd_classical_ccd}). It is natural to ask whether one can establish Theorem \ref{regular_cd_classical_ccd} in the general setting. Unfortunately, the following counterexample gives a negative answer:

\begin{exmp} \label{Kphi_ccd_phi_not_regular}
For Lawvere's quantale $\CQ=([0,\infty]^{\op},+)$ considered in Example \ref{K_not_full}, any $\phi:\star\oto\star$ with $\phi\in(0,\infty)$ is a non-regular $\CQ$-distributor, but $\Kphi=\CQ=\CP\star$ is always a (ccd) $\CQ$-category.
\end{exmp}

In fact, it is the constructive complete \emph{co}distributivity (instead of constructive complete distributivity) of $\Kphi$ that necessarily implies the regularity of $\phi$, which fails to reveal itself in the case $\CQ={\bf 2}$ since these two notions are equivalent there. This observation will be demonstrated in Theorem \ref{Kphi_coccd_phi_regular} below.

A $\CQ$-category $\bbA$ is \emph{constructively completely codistributive}, or \emph{\opccd} for short, if $\bbA^{\op}$ is a (ccd) $\CQ^{\op}$-category. Explicitly, $\bbA$ is {\opccd} if it is complete and $\inf_{\bbA}:\PdA\lra\bbA$ admits a right adjoint $S_{\bbA}:\bbA\lra\PdA$ in $\QCat$; or equivalently, if there exists a string of adjoint $\CQ$-functors
$$\sYd_{\bbA}\dv{\inf}_{\bbA}\dv S_{\bbA}:\bbA\lra\PdA.$$

\begin{exmp} \label{PdA_coccd}
For any $\CQ$-category $\bbA$, $\PdA$ is an {\opccd} $\CQ$-category since $(\PdA)^{\op}\cong\PA^{\op}$ is a (ccd) $\CQ^{\op}$-category. Indeed, from \eqref{Fra_def} and Proposition \ref{PA_PdA_sup}(6) one sees that $\inf_{\PdA}=(\sYd_{\bbA})^{\nla}:\CPd\PdA\lra\PdA$ has a right adjoint $(\sYd_{\bbA})^{\nra}:\PdA\lra\CPd\PdA$ in $\QCat$.
\end{exmp}

The main result of this section is:

\begin{thm} \label{Kphi_coccd_phi_regular}
If $\Kphi$ is {\opccd}, then $\phi$ is regular.
\end{thm}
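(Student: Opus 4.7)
The task is to prove $\phi\leq\phi\circ\ophi\circ\phi$, the reverse inequality being automatic from the definition of $\ophi$. It therefore suffices to exhibit a single $\CQ$-distributor $\zeta:\bbB\oto\bbA$ with $\phi\leq\phi\circ\zeta\circ\phi$, for by the maximality of $\ophi$ (Proposition~\ref{regular_condition}) any such $\zeta$ automatically satisfies $\zeta\leq\ophi$, after which monotonicity of composition finishes the job.

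My plan is to read $\zeta$ off from the {\opccd} structure on $\Kphi$. Explicitly, the hypothesis supplies a string of adjoints
$$\sYd_{\Kphi}\dv{\inf}_{\Kphi}\dv S_{\Kphi}:\Kphi\lra\PdKphi$$
in $\QCat$. Transporting along the isomorphism $\phi^*:\Kphi\to\Rphi$ (with inverse $\phi_*$) produces an analogous string on $\Rphi$, and since $\Rphi\hookrightarrow\PA$ is a left adjoint by Proposition~\ref{monad_reflective}(1), suprema and tensors in $\Rphi$ are inherited from $\PA$, so $S_{\Rphi}$ is amenable to pointwise calculation inside $\PA$.

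To define $\zeta$, I would single out the distinguished elements $\phi^*\sY_\bbB y=\phi(-,y)\in\Rphi$ for $y\in\bbB_0$, apply $S_{\Rphi}$ to each, and then evaluate the resulting copresheaf on $\Rphi$ against closed Yoneda elements $\phi^*\phi_*\sY_\bbA x\in\Rphi$ for $x\in\bbA_0$. This should produce a $\CQ$-arrow $\zeta(y,x)\in\CQ(ty,tx)$ whose bimodularity over $\bbB$--$\bbA$ follows from the $\CQ$-functoriality of $S_{\Rphi}$ together with the standard compatibilities of the Yoneda embeddings with the hom-arrows of $\bbA$ and $\bbB$.

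The verification of $\phi\leq\phi\circ\zeta\circ\phi$ is the main obstacle. I expect to unwind both sides using the Yoneda factorization $\phi=\tphi^\nat\circ(\sY_\bbA)_\nat$ together with the defining adjointness $\Kphi({\inf}_{\Kphi}\Lam,\lam)=\PdKphi(\Lam,S_{\Kphi}\lam)$, specialized to $\lam=\rho_y:=\phi_*\phi^*\sY_\bbB y$ and $\Lam$ assembled from Yoneda data. The delicate points are (i) careful bookkeeping of types over the non-commutative $\CQ$, (ii) handling the reversed local order on copresheaves (Remark~\ref{PdA_QDist_order}), and (iii) confirming that the typed family $\{\zeta(y,x)\}$ genuinely assembles into a $\CQ$-distributor rather than merely a family of $\CQ$-arrows.
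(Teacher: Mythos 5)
Your reduction to exhibiting \emph{some} $\zeta:\bbB\oto\bbA$ with $\phi\leq\phi\circ\zeta\circ\phi$ is correct (by maximality of $\ophi$ and Proposition \ref{regular_condition}), and your instinct to extract such a witness from the right adjoint $S_{\Rphi}$ of $\inf_{\Rphi}$ on $\Rphi\cong\Kphi$ points in the right direction. But the proposal stops exactly where the theorem begins: the inequality $\phi\leq\phi\circ\zeta\circ\phi$ is never verified, only announced as ``the main obstacle'' to be handled by ``unwinding both sides,'' and the items you list as delicate bookkeeping are in fact the mathematical substance of the result. Nothing in the proposal shows that your candidate $\zeta(y,x)=(S_{\Rphi}\phi(-,y))(\phi^*\phi_*\sY_{\bbA}x)$, which does have the correct types, satisfies the required inequality, so as it stands this is a plausible plan rather than a proof.

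Two devices that make the verification work in the paper are absent from your sketch, and without something playing their role there is no evident route to completing it. First, since $\Kphi=\CK|\phi|$ (Lemma \ref{Kphi_invariant}), one may assume without loss of generality that $\bbA=\phi\rda\phi$ and $\bbB=\phi\lda\phi$; this normalization is what makes the key identities $\bbA=\ga\rda\ga$ and $\phi=(\ga\lda\phi)\rda\ga$ available, where $\ga:\bbA\oto\Rphi$ is the corestriction of $(\sY_{\bbA})_{\nat}$, and these identities are what allow the formulas of Lemma \ref{interior_sup_inf} to be evaluated against $\phi$ itself (e.g.\ ${\inf}_{\Rphi}\hga=\phi^*\phi_*\sY_{\bbA}$ fails without $\bbA=\ga\rda\ga$). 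Second, the engine of the argument is not the pointwise adjunction formula $\Rphi({\inf}_{\Rphi}\Lam,\mu)=\CPd\Rphi(\Lam,S_{\Rphi}\mu)$ that you invoke, but the fact that ${\inf}_{\Rphi}$, being a left adjoint, is $\sup$-preserving (Proposition \ref{la_sup_preserving}), applied to the specific presheaf $\Psi=\phi(-,b)\circ H^{\nat}$ on $\CPd\Rphi$ with $H=S_{\Rphi}\phi^*\phi_*\sY_{\bbA}$: computing ${\inf}_{\Rphi}{\sup}_{\CPd\Rphi}\Psi\geq\phi(-,b)$ and ${\sup}_{\Rphi}{\inf}_{\Rphi}^{\ra}\Psi=\phi(-,b)\circ(\bbA\lda\phi)\circ\phi$ and equating the two is what yields $\phi\leq\phi\circ\ophi\circ\phi$. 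Supplying these steps (or a genuine substitute for them) is the actual content of the theorem, and it is precisely what the proposal defers.
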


Before proving this theorem, we need some preparations. Each set $A$ equipped with a type map $t:A\lra\ob\CQ$ (called a \emph{$\CQ$-typed set}) may be viewed as a \emph{discrete} $\CQ$-category with
$$A(x,y)=\begin{cases}
1_{tx} & \text{if}\ x=y,\\
\bot_{tx,ty} & \text{else}.
\end{cases}$$
A \emph{$\CQ$-relation} (also \emph{$\CQ$-matrix} \cite{Betti1983,Heymans2010}) between $\CQ$-typed sets is exactly a $\CQ$-distributor between discrete $\CQ$-categories, and we write $|\phi|:|\bbA|\oto|\bbB|$ for the underlying $\CQ$-relation of a $\CQ$-distributor $\phi:\bbA\oto\bbB$. It is easy to verify the following lemma:

\begin{lem} \label{Q_rel_dist}
For $\CQ$-categories $\bbA$, $\bbB$ and any $\CQ$-relation $\phi:|\bbA|\oto|\bbB|$, the following statements are equivalent:
\begin{enumerate}[label={\rm(\arabic*)}]
\item $\phi:\bbA\oto\bbB$ is a $\CQ$-distributor.
\item $\phi\circ\bbA\leq\phi$ and $\bbB\circ\phi\leq\phi$.
\item $\bbA\leq\phi\rda\phi$ and $\bbB\leq\phi\lda\phi$.
\end{enumerate}
\end{lem}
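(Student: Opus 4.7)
The plan is to establish the cycle (1) $\Leftrightarrow$ (2) $\Leftrightarrow$ (3) purely by unfolding definitions in the quantaloid of $\CQ$-relations between $\CQ$-typed sets. The core observation is that the defining inequality of a $\CQ$-distributor, $\bbB(y,y')\circ\phi(x,y)\circ\bbA(x',x)\leq\phi(x',y')$, is precisely the pointwise formulation of the single $\CQ$-relational inequality $\bbB\circ\phi\circ\bbA\leq\phi$, which one then splits into two one-sided inequalities (giving (2)) and rephrases via the residuations of composition (giving (3)).

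For (1) $\Rightarrow$ (2), I would invoke the reflexivity axiom of a $\CQ$-category: $1_{tx}\leq\bbA(x,x)$ and $1_{ty}\leq\bbB(y,y)$ imply that the identity $\CQ$-relations on $|\bbA|$ and $|\bbB|$ are dominated by $\bbA$ and $\bbB$, respectively. Therefore $\phi\circ\bbA\leq\bbB\circ\phi\circ\bbA\leq\phi$, and symmetrically $\bbB\circ\phi\leq\bbB\circ\phi\circ\bbA\leq\phi$. For (2) $\Rightarrow$ (1), simply compose the two assumed inequalities: $\bbB\circ\phi\circ\bbA\leq\bbB\circ\phi\leq\phi$, and read off the $(x',y')$-entry to recover the $\CQ$-distributor axiom. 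For (2) $\Leftrightarrow$ (3), apply the quantaloidal adjunctions directly: $\phi\circ(-)\dv\phi\rda(-)$ yields $\phi\circ\bbA\leq\phi\iff\bbA\leq\phi\rda\phi$, and $(-)\circ\phi\dv(-)\lda\phi$ yields $\bbB\circ\phi\leq\phi\iff\bbB\leq\phi\lda\phi$.

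There is no serious obstacle here; every step is formal bookkeeping involving only the quantaloid axioms (joins on each side, adjoints of composition) and the unit condition of $\CQ$-categories. The one small point worth emphasizing in the write-up is that passage between the ``pointwise'' formulation of (1) and the ``relation-level'' inequality $\bbB\circ\phi\circ\bbA\leq\phi$ uses the fact that composition in $\QRel$ preserves joins, so a sup-indexed inequality reduces to an inequality on each summand.
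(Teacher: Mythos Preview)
Your proposal is correct; the paper itself omits the proof entirely, stating only that ``it is easy to verify'' the lemma, so your routine unfolding of the distributor axiom into the relational inequality $\bbB\circ\phi\circ\bbA\leq\phi$ together with the residuation adjunctions is exactly the intended verification.
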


\begin{lem} \label{Kphi_invariant}
$\Kphi=\CK|\phi|$.
\end{lem}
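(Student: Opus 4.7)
The plan is to show the two sets are equal with matching hom-arrows, by exploiting the observation that the Kan adjunction operators are given by the same formulas whether $\phi$ is viewed as a $\CQ$-distributor or merely as a $\CQ$-relation, while the fixed-point equation itself is strong enough to force the presheaf condition.

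First I would unpack what the two sides are. Both $\mathcal{K}\phi$ and $\mathcal{K}|\phi|$ sit, as sets, inside the collection of $\CQ$-relations $|\mathbb{B}|\oto\star_X$ (for varying $X$): in the former case we restrict to those relations which are already $\CQ$-distributors $\mathbb{B}\oto\star_X$ and which are fixed by $\phi_*\phi^*$, and in the latter we keep all $\CQ$-relations fixed by $|\phi|_*|\phi|^*$. Since $\phi^*\lam=\lam\circ\phi$ and $\phi_*\mu=\mu\lda\phi$ are defined by exactly the formulas that give $|\phi|^*$ and $|\phi|_*$, the inclusion $\mathcal{K}\phi\subseteq\mathcal{K}|\phi|$ is immediate.

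For the reverse inclusion, I would take $\lam\in\mathcal{K}|\phi|$, so $\lam=(\lam\circ\phi)\lda\phi$, and verify the presheaf condition using Lemma \ref{Q_rel_dist}. Since $\phi:\bbA\oto\bbB$ is itself a $\CQ$-distributor, Lemma \ref{Q_rel_dist} gives $\bbB\circ\phi\leq\phi$, so
\[
(\lam\circ\bbB)\circ\phi=\lam\circ(\bbB\circ\phi)\leq\lam\circ\phi,
\]
and by the adjunction $-\circ\phi\dv -\lda\phi$ this yields $\lam\circ\bbB\leq(\lam\circ\phi)\lda\phi=\lam$. Hence $\lam$ is a $\CQ$-distributor $\bbB\oto\star_X$, i.e.\ a presheaf on $\bbB$, and its fixed-point identity under $|\phi|_*|\phi|^*$ becomes a fixed-point identity under $\phi_*\phi^*$. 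This shows $\mathcal{K}|\phi|\subseteq\mathcal{K}\phi$.

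Finally, the hom-arrows on both sides are computed as $\lam'\lda\lam$ in $\QDist$, using the same division operation, so the two $\CQ$-categorical structures coincide, giving $\mathcal{K}\phi=\mathcal{K}|\phi|$. I do not foresee any real obstacle here; the only conceptual step is recognizing that the fixed-point condition automatically promotes a bare $\CQ$-relation to a presheaf, which is precisely what the computation above records.
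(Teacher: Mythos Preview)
Your proof is correct and follows essentially the same approach as the paper: both reduce the claim to showing that any $\lam\in\CK|\phi|$ automatically satisfies $\lam\circ\bbB\leq\lam$, hence lies in $\PB$. The only cosmetic difference is that the paper uses $\bbB\leq\phi\lda\phi$ and the composition law for $\lda$, while you use the equivalent inequality $\bbB\circ\phi\leq\phi$ together with the adjunction $-\circ\phi\dv -\lda\phi$; these are two phrasings of the same step.
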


\begin{proof}
It suffices to prove that $\lam\in\CK|\phi|$ implies $\lam\in\PB$ for any $\lam\in\CP|\bbB|$. Indeed,
\begin{align*}
\lam\circ\bbB&=((\lam\circ|\phi|)\lda|\phi|)\circ\bbB&(\lam\in\CK|\phi|)\\
&\leq((\lam\circ|\phi|)\lda|\phi|)\circ(|\phi|\lda|\phi|)&(\text{Lemma \ref{Q_rel_dist}(iii)})\\
&\leq((\lam\circ|\phi|)\lda|\phi|)\\
&=\lam,&(\lam\in\CK|\phi|)
\end{align*}
showing that $\lam\in\PB$.
\end{proof}

\begin{lem} \label{interior_sup_inf}
If $F$ is a $\CQ$-comonad on $\PA$ and $\bbB:=\Fix(F)$, then
$${\sup}_{\bbB}\Theta=\Theta\circ\ga\quad\text{and}\quad{\inf}_{\bbB}\Lam=f(\Lam\rda\ga)$$
for all $\Theta\in\CP\bbB$, $\Lam\in\CPd\bbB$, where $\ga:\bbA\oto\bbB$ is the codomain restriction of the graph of the Yoneda embedding $(\sY_{\bbA})_{\nat}:\bbA\oto\PA$.
\end{lem}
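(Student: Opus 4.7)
The strategy is to exploit the adjunction $J\dv F$ in $\QCat$, where $J:\bbB\lra\PA$ is the inclusion and $F$ denotes also the codomain restriction $\PA\lra\bbB$ (cf.\ Proposition \ref{monad_reflective}(1)). Since $F\leq 1_{\PA}$ and $FF=F$, one has $FJ=1_{\bbB}$ and $JF\leq 1_{\PA}$, which readily yields this adjunction. The first auxiliary step is the identity
$$J^{\nat}\circ(\sY_{\bbA})_{\nat}=\ga,$$
which follows by computing the composite pointwise and applying Lemma \ref{Yoneda_lemma}: at $(x,\mu)$ with $\mu\in\bbB_0$, the composite collapses to $\mu(x)=\ga(x,\mu)$.

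For the supremum formula, since $J$ is a left adjoint in $\QCat$, Proposition \ref{la_sup_preserving}(ii) says $J$ is $\sup$-preserving. Thus, for $\Theta\in\PB$,
$$J{\sup}_{\bbB}\Theta={\sup}_{\PA}J^{\ra}\Theta=(\Theta\circ J^{\nat})\circ(\sY_{\bbA})_{\nat}=\Theta\circ\ga,$$
where the middle equality uses $J^{\ra}\Theta=\Theta\circ J^{\nat}$ (from \eqref{Fra_def}) together with Proposition \ref{PA_PdA_sup}(3). Since $J$ is the inclusion, this yields $\sup_{\bbB}\Theta=\Theta\circ\ga$.

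For the infimum formula, the plan is to lift $\Lam$ to a copresheaf on $\PA$ and then transfer along the inf-preserving right adjoint $F$. Setting $J^{\nra}\Lam:=J_{\nat}\circ\Lam\in\CPd\PA$, the identity $FJ=1_{\bbB}$ gives
$$F^{\nra}J^{\nra}\Lam=F_{\nat}\circ J_{\nat}\circ\Lam=(FJ)_{\nat}\circ\Lam=\Lam.$$
Hence, by Proposition \ref{la_sup_preserving}(ii) and Proposition \ref{PA_PdA_sup}(4),
$${\inf}_{\bbB}\Lam={\inf}_{\bbB}F^{\nra}J^{\nra}\Lam=F{\inf}_{\PA}(J_{\nat}\circ\Lam)=F\bigl((J_{\nat}\circ\Lam)\rda(\sY_{\bbA})_{\nat}\bigr).$$
Applying the residual identity $(\psi\circ\phi)\rda\chi=\phi\rda(\psi\rda\chi)$ together with the fact that any adjunction $\phi\dv\psi$ in $\QDist$ satisfies $\phi\rda\chi=\psi\circ\chi$ (invoked for $J_{\nat}\dv J^{\nat}$) reduces the parenthesized term to $\Lam\rda(J^{\nat}\circ(\sY_{\bbA})_{\nat})=\Lam\rda\ga$, so that $\inf_{\bbB}\Lam=F(\Lam\rda\ga)$.

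The main subtlety lies in the infimum case: one must produce the correct lifting $J^{\nra}\Lam$ satisfying $F^{\nra}J^{\nra}\Lam=\Lam$, and then unwind the two elementary residual identities above; by contrast, the supremum formula is essentially immediate once $\ga$ is identified with $J^{\nat}\circ(\sY_{\bbA})_{\nat}$.
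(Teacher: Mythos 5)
Your proposal is correct and takes essentially the same route as the paper: both hinge on the identification $\ga=J^{\nat}\circ(\sY_{\bbA})_{\nat}$, the adjunction $J\dv F$ from Proposition \ref{monad_reflective}(1), and the explicit (co)limit formulas of Proposition \ref{PA_PdA_sup}, the only difference being that the paper verifies the supremum formula hom-wise via the composite adjunction ${\sup}_{\PA}J^{\ra}\dv J^{\la}\sY_{\PA}$ and leaves the infimum case as ``similar''. Your preservation-plus-residuation derivation, including the explicit treatment of the infimum half via $F^{\nra}J^{\nra}\Lam=\Lam$, is a sound rendering of the same argument.
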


\begin{proof}
Let $J:\bbB\lra\PA$ be the inclusion $\CQ$-functor, then
\begin{equation} \label{ga_Yoneda}
\ga=(\sY_{\bbA})_{\nat}(-,J-)=\PA(-,J-)\circ(\sY_{\bbA})_{\nat}=J^{\nat}\circ(\sY_{\bbA})_{\nat}.
\end{equation}
Note that $\Theta\circ J^{\nat}=J^{\ra}\Theta\in\CP\PA$, and thus
\begin{align*}
\bbB(\Theta\circ\ga,\mu)&=\bbB(\Theta\circ J^{\nat}\circ(\sY_{\bbA})_{\nat},\mu)&(\text{Equation \eqref{ga_Yoneda}})\\
&=\PA(\Theta\circ J^{\nat}\circ(\sY_{\bbA})_{\nat},\mu)\\
&=\PA({\sup}_{\PA}J^{\ra}\Theta,\mu)&(\text{Proposition \ref{PA_PdA_sup}(3)})\\
&=\PB(\Theta,J^{\la}\sY_{\PA}\mu)&({\sup}_{\PA}J^{\ra}\dv J^{\la}\sY_{\PA}:\PA\lra\PB)\\
&=(\PA(-,\mu)\circ J_{\nat})\lda\Theta\\
&=\PA(J-,\mu)\lda\Theta\\
&=\bbB(-,\mu)\lda\Theta
\end{align*}
for all $\mu\in\bbB$, which indicates $\Theta\circ\ga={\sup}_{\bbB}\Theta$. The second identity may be verified similarly.
\end{proof}

\begin{proof}[of Theorem \ref{Kphi_coccd_phi_regular}]
Let $\phi:\bbA\oto\bbB$ be a $\CQ$-distributor. Note that $(\bbA_0,\phi\rda\phi)$ is also a $\CQ$-category whose underlying $\CQ$-typed set is $\bbA_0$ with hom-arrows given by $$(\phi\rda\phi)(x,x')=\phi(x',-)\rda\phi(x,-)\in\CQ(tx,tx')$$
for all $x,x'\in\bbA_0$; so is $(\bbB_0,\phi\lda\phi)$. Thus, from Lemma \ref{Q_rel_dist}(iii) one sees that $\phi:(\bbA_0,\phi\rda\phi)\oto(\bbB_0,\phi\lda\phi)$ is also a $\CQ$-distributor.

Moreover, Lemma \ref{Kphi_invariant} tells us that $\Kphi$ is independent of the $\CQ$-categorical structures of the domain and codomain of $\phi$, which guarantees
$$\CK(\phi:\bbA\oto\bbB)=\CK(|\phi|:|\bbA|\oto|\bbB|)=\CK(\phi:(\bbA_0,\phi\rda\phi)\oto(\bbB_0,\phi\lda\phi)).$$
Hence, without loss of generality one may assume the $\CQ$-distributor $\phi:\bbA\oto\bbB$ to satisfy $\bbA=\phi\rda\phi$ and $\bbB=\phi\lda\phi$; otherwise one just needs to consider the $\CQ$-distributor $\phi:(\bbA_0,\phi\rda\phi)\oto(\bbB_0,\phi\lda\phi)$ instead, because the regularity of $\phi:\bbA\oto\bbB$ obviously follows from that of $\phi:(\bbA_0,\phi\rda\phi)\oto(\bbB_0,\phi\lda\phi)$.

Since $\Rphi(\cong\Kphi)$ is {\opccd}, one has an adjunction
$${\inf}_{\Rphi}\dv S_{\Rphi}:\Rphi\lra\CPd\Rphi$$
in $\QCat$. Thus $\inf_{\Rphi}:\CPd\Rphi\lra\Rphi$ is $\sup$-preserving (see Proposition \ref{la_sup_preserving}(ii)) in the sense that
\begin{equation} \label{inf_Rphi_preserves_sup}
{\inf}_{\Rphi}{\sup}_{\CPd\Rphi}\Psi={\sup}_{\Rphi}{\inf}_{\Rphi}^{\ra}\Psi
\end{equation}
for all $\Psi\in\CP\CPd\Rphi$. We shall apply \eqref{inf_Rphi_preserves_sup} to the presheaf
\begin{equation} \label{Psi_def}
\Psi=\phi(-,b)\circ H^{\nat}:\CPd\Rphi\oto\bbA\oto\star_{tb}
\end{equation}
on $\CPd\Rphi$ for any $b\in\bbB_0$, where the $\CQ$-functor $H:\bbA\lra\CPd\Rphi$ is given by
\begin{equation} \label{H_def}
H:=(\bbA\to^{\sY_{\bbA}}\PA\to^{\phi^*\phi_*}\Rphi\to^{S_{\Rphi}}\CPd\Rphi).
\end{equation}

{\bf Step 1.} Let $\ga:\bbA\oto\Rphi$ be the codomain restriction of $(\sY_{\bbA})_{\nat}:\bbA\oto\PA$. We claim
$${\inf}_{\Rphi}\hga=\phi^*\phi_*\sY_{\bbA}:\bbA\lra\Rphi.$$
$$\bfig
\square<800,500>[\bbA`\CPd\Rphi`\PA`\Rphi;\hga`\sY_{\bbA}`\inf_{\Rphi}`\phi^*\phi_*]
\efig$$
In fact, if one restricts the codomain of $\tphi:\bbB\lra\PA$ to $\Rphi$, then similar to Equation \eqref{dist_Yoneda} one derives
\begin{equation} \label{tphi_ga}
\phi=\tphi^{\nat}\circ\ga:\bbA\oto\Rphi\oto\bbB.
\end{equation}
It follows that
$$\bbA\leq\ga\rda\ga\leq(\tphi^{\nat}\circ\ga)\rda(\tphi^{\nat}\circ\ga)=\phi\rda\phi=\bbA,$$
and consequently $\bbA=\ga\rda\ga$. Therefore, by Lemma \ref{interior_sup_inf} one soon has
$${\inf}_{\Rphi}\hga x=\phi^*\phi_*(\hga x\rda\ga)=\phi^*\phi_*(\ga(x,-)\rda\ga)=\phi^*\phi_*(\bbA(-,x))=\phi^*\phi_*\sY_{\bbA}x$$
for all $x\in\bbA_0$, as desired.

{\bf Step 2.} ${\sup}_{\CPd\Rphi}\Psi\leq\ga\lda\phi(-,b)$ in $\QDist(\star_{tb},\Rphi)$. Indeed,
\begin{align*}
{\sup}_{\CPd\Rphi}\Psi&=(\sYd_{\Rphi})^{\nat}\lda\Psi&(\text{Proposition \ref{PA_PdA_sup}(5)})\\
&\leq((\sYd_{\Rphi})^{\nat}\circ\hga_{\nat})\lda(\Psi\circ\hga_{\nat})\\
&=\ga\lda(\Psi\circ\hga_{\nat})&(\text{Equation \eqref{dist_Yoneda}})\\
&=\ga\lda(\phi(-,b)\circ H^{\nat}\circ\hga_{\nat})&(\text{Equation \eqref{Psi_def}})\\
&=\ga\lda(\phi(-,b)\circ(\phi^*\phi_*\sY_{\bbA})^{\nat}\circ S_{\Rphi}^{\nat}\circ\hga_{\nat})&(\text{Equation \eqref{H_def}})\\
&=\ga\lda(\phi(-,b)\circ(\phi^*\phi_*\sY_{\bbA})^{\nat}\circ({\inf}_{\Rphi})_{\nat}\circ\hga_{\nat})&({\inf}_{\Rphi}\dv S_{\Rphi})\\
&=\ga\lda(\phi(-,b)\circ(\phi^*\phi_*\sY_{\bbA})^{\nat}\circ(\phi^*\phi_*\sY_{\bbA})_{\nat})&(\text{Step 1})\\
&\leq\ga\lda\phi(-,b).&((\phi^*\phi_*\sY_{\bbA})_{\nat}\dv(\phi^*\phi_*\sY_{\bbA})^{\nat})
\end{align*}

{\bf Step 3.} ${\inf}_{\Rphi}{\sup}_{\CPd\Rphi}\Psi\geq\phi(-,b)$. First, we prove
\begin{equation} \label{phib}
\phi=(\ga\lda\phi)\rda\ga.
\end{equation}
On one hand, if one restricts the codomain of $\tphi:\bbB\lra\PA$ to $\Rphi$, then
\begin{equation} \label{cograph_lda}
\tphi^{\nat}\circ(\ga\lda\phi)=(\tphi^{\nat}\circ\ga)\lda\phi.
\end{equation}
Indeed, from $\tphi_{\nat}\dv\tphi^{\nat}$ one has
\begin{align*}
(\tphi^{\nat}\circ\ga)\lda\phi&\leq\tphi^{\nat}\circ\tphi_{\nat}\circ((\tphi^{\nat}\circ\ga)\lda\phi)\\
&\leq\tphi^{\nat}\circ((\tphi_{\nat}\circ\tphi^{\nat}\circ\ga)\lda\phi)\\
&\leq\tphi^{\nat}\circ(\ga\lda\phi),
\end{align*}
and the reverse inequality is trivial. Thus
\begin{align*}
(\ga\lda\phi)\rda\ga&\leq(\tphi^{\nat}\circ(\ga\lda\phi))\rda(\tphi^{\nat}\circ\ga)\\
&=((\tphi^{\nat}\circ\ga)\lda\phi)\rda(\tphi^{\nat}\circ\ga)&(\text{Equation \eqref{cograph_lda}})\\
&=(\phi\lda\phi)\rda\phi&(\text{Equation \eqref{tphi_ga}})\\
&=\bbB\rda\phi&(\phi\lda\phi=\bbB)\\
&=\phi.
\end{align*}
On the other hand, $\phi\leq(\ga\lda\phi)\rda\ga$ is trivial, showing the validity of \eqref{phib}.

Second, since it follows from ${\sup}_{\CPd\Rphi}\Psi\leq\ga\lda\phi(-,b)$ in $\QDist(\star_{tb},\Rphi)$ that ${\sup}_{\CPd\Rphi}\Psi\geq\ga\lda\phi(-,b)$ in $\CPd\Rphi$ (see Remark \ref{PdA_QDist_order}), it follows that
\begin{align*}
{\inf}_{\Rphi}{\sup}_{\CPd\Rphi}\Psi&\geq{\inf}_{\Rphi}(\ga\lda\phi(-,b))\\
&=\phi^*\phi_*((\ga\lda\phi(-,b))\rda\ga)&(\text{Lemma \ref{interior_sup_inf}})\\
&=\phi^*\phi_*(\phi(-,b))&(\text{Equation \eqref{phib}})\\
&=\phi^*\phi_*\phi^*\sY_{\bbB}b&(\text{Equation \eqref{tphi_def}})\\
&=\phi^*\sY_{\bbB}b&(\phi^*\dv\phi_*)\\
&=\phi(-,b).&(\text{Equation \eqref{tphi_def}})
\end{align*}

{\bf Step 4.} ${\sup}_{\Rphi}{\inf}_{\Rphi}^{\ra}\Psi=\phi(-,b)\circ(\bbA\lda\phi)\circ\phi$. First, ${\inf}_{\Rphi}S_{\Rphi}=1_{\Rphi}$ since
$$1_{\Rphi}={\inf}_{\Rphi}\sYd_{\Rphi}\leq{\inf}_{\Rphi}S_{\Rphi}{\inf}_{\Rphi}\sYd_{\Rphi}={\inf}_{\Rphi}S_{\Rphi}\leq 1_{\Rphi}$$
follows from $\sYd_{\Rphi}\dv{\inf}_{\Rphi}\dv S_{\Rphi}:\Rphi\lra\CPd\Rphi$.

Second,
\begin{equation} \label{A_lda_phi_circ_phi}
(\bbA\lda\phi)\circ\phi=(\phi^*\phi_*\sY_{\bbA})^{\nat}\circ\ga
\end{equation}
since
\begin{align*}
(\bbA(-,x)\lda\phi)\circ\phi&=\phi^*\phi_*\sY_{\bbA}x\\
&={\sup}_{\Rphi}\sY_{\Rphi}\phi^*\phi_*\sY_{\bbA}x&({\sup}_{\Rphi}\sY_{\Rphi}=1_{\Rphi})\\
&=(\sY_{\Rphi}\phi^*\phi_*\sY_{\bbA}x)\circ\ga&(\text{Lemma \ref{interior_sup_inf}})\\
&=\Rphi(-,\phi^*\phi_*\sY_{\bbA}x)\circ\ga\\
&=(\phi^*\phi_*\sY_{\bbA})^{\nat}(-,x)\circ\ga.
\end{align*}
for all $x\in\bbA_0$. Hence,
\begin{align*}
{\sup}_{\Rphi}{\inf}_{\Rphi}^{\ra}\Psi&={\sup}_{\Rphi}(\Psi\circ{\inf}_{\Rphi}^{\nat})&(\text{see \eqref{Fra_def}})\\
&={\sup}_{\Rphi}(\phi(-,b)\circ(\phi^*\phi_*\sY_{\bbA})^{\nat}\circ S_{\Rphi}^{\nat}\circ{\inf}_{\Rphi}^{\nat})&(\text{Equations \eqref{Psi_def} \& \eqref{H_def}})\\
&={\sup}_{\Rphi}(\phi(-,b)\circ(\phi^*\phi_*\sY_{\bbA})^{\nat})&({\inf}_{\Rphi}S_{\Rphi}=1_{\Rphi})\\
&=\phi(-,b)\circ(\phi^*\phi_*\sY_{\bbA})^{\nat}\circ\ga&(\text{Lemma \ref{interior_sup_inf}})\\
&=\phi(-,b)\circ(\bbA\lda\phi)\circ\phi.&(\text{Equation \eqref{A_lda_phi_circ_phi}})
\end{align*}

{\bf Step 5.} As $b\in\bbB_0$ is arbitrary, Step 3 and Step 4 in combination with Equation \eqref{inf_Rphi_preserves_sup} lead to
$$\phi\circ\ophi\circ\phi=\phi\circ((\phi\rda\phi)\lda\phi)\circ\phi=\phi\circ(\bbA\lda\phi)\circ\phi\geq\phi.$$
From Proposition \ref{regular_condition} one concludes that $\phi$ is regular.
\end{proof}

The following corollary is an immediate consequence of Theorems \ref{Kphi_ccd} and \ref{Kphi_coccd_phi_regular}:

\begin{cor} \label{Kphi_coccd_imply_ccd}
For any $\CQ$-distributor $\phi$, if $\Kphi$ is {\opccd}, then it is also (ccd).
\end{cor}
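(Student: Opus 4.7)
The corollary follows immediately by chaining the two preceding theorems, so my ``proof'' is really just a one-line composition of implications. The plan is to observe that Theorem~\ref{Kphi_coccd_phi_regular} gives the implication $\Kphi$ is \opccd\ $\Lra\phi$ is regular, and then Theorem~\ref{Kphi_ccd} gives $\phi$ is regular $\Lra\Kphi$ is (ccd); concatenating these yields the desired implication $\Kphi$ is \opccd\ $\Lra\Kphi$ is (ccd).

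Concretely, I would first assume $\Kphi$ is \opccd. By Theorem~\ref{Kphi_coccd_phi_regular}, the $\CQ$-distributor $\phi$ is then regular. Then I would invoke Theorem~\ref{Kphi_ccd}, which asserts that for any regular $\CQ$-distributor $\phi$ the $\CQ$-category $\Kphi$ is (ccd), and conclude that $\Kphi$ is (ccd). That is the entire argument.

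There is no genuine obstacle here: all the work has already been done in establishing Theorems~\ref{Kphi_ccd} and \ref{Kphi_coccd_phi_regular}. The only thing worth noting conceptually is that the corollary is not symmetric in the (ccd)/\opccd\ roles, which reflects the asymmetry built into the chain \eqref{logic_chain}: one direction of the bi-implication goes through regularity of $\phi$ as an intermediate step, while the converse (``(ccd) $\Lra$ \opccd'') fails in general (as shown by Example~\ref{phi_regular_Kphi_not_opccd}) and is recovered only under extra hypotheses on $\CQ$ such as being a Girard quantaloid, treated in Section~\ref{Girard_quantaloids_reg_ccd_opccd}.
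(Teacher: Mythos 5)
Your argument is exactly the paper's: the corollary is stated there as an immediate consequence of Theorems \ref{Kphi_ccd} and \ref{Kphi_coccd_phi_regular}, obtained by the same chaining through regularity of $\phi$ that you describe. The proposal is correct and matches the paper's approach.
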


\begin{exmp}
For any small quantaloid $\CQ$, the terminal object in $\QCat$ is given by $(\ob\CQ,\top)$ with $\top(X,Y)=\top_{X,Y}:X\lra Y$, the top element in $\CQ(X,Y)$. $(\ob\CQ,\top)$ is both (ccd) and {\opccd} since
$$(\ob\CQ,\top)=\CP\varnothing=\CPd\varnothing,$$
where $\varnothing$ denotes the empty $\CQ$-category, i.e., the initial object in $\QCat$. Therefore:
\begin{enumerate}[label={\rm(\arabic*)}]
\item $\CK\varnothing(=\CP\varnothing)$ is both (ccd) and {\opccd}.
\item For any $X,Y\in\ob\CQ$, the bottom element $\bot_{X,Y}\in\CQ(X,Y)$ gives a $\CQ$-distributor $\bot_{X,Y}:\star_X\oto\star_Y$. Then $\CK\bot_{X,Y}$ is a $\CQ$-subcategory of $\CP Y$ consisting of $\CQ$-arrows $\top_{Y,Z}$ $(Z\in\ob\CQ)$. It is straightforward to verify $\CK\bot_{X,Y}\cong(\ob\CQ,\top)$ and, consequently, $\CK\bot_{X,Y}$ is both (ccd) and {\opccd}.
\end{enumerate}
\end{exmp}

It should be reminded that for a general quantaloid $\CQ$, neither the converse statement of Theorem \ref{Kphi_coccd_phi_regular} nor that of Corollary \ref{Kphi_coccd_imply_ccd} is true:

\begin{exmp} \label{phi_regular_Kphi_not_opccd}
For the identity $\CQ$-distributor on any $\CQ$-category $\bbA$ (which is clearly regular), $\CK\bbA=\PA$ is (ccd) (see Examples \ref{KA} and \ref{PA_ccd}) but in general not {\opccd}.
\end{exmp}

However, one is able to reconcile the notions of regularity, (ccd) and {\opccd} when $\CQ$ is a Girard quantaloid as discussed in the next section.

\section{Girard quantaloids reconcile regularity, (ccd) and {\opccd}} \label{Girard_quantaloids_reg_ccd_opccd}

In a quantaloid $\CQ$, a family of $\CQ$-arrows $\{d_X:X\lra X\}_{X\in\ob\CQ}$ is a \emph{cyclic family} (resp. \emph{dualizing family}) if
$$d_X\lda f=f\rda d_Y\quad (\text{resp.}\ (d_X\lda f)\rda d_X=f=d_Y\lda(f\rda d_Y))$$
for all $\CQ$-arrows $f:X\lra Y$. A \emph{Girard quantaloid} \cite{Rosenthal1992} is a quantaloid $\CQ$ equipped with a cyclic dualizing family of $\CQ$-arrows.

In a Girard quantaloid $\CQ$, each $\CQ$-arrow $f:X\lra Y$ has a \emph{complement}
$$\neg f=d_X\lda f=f\rda d_Y:Y\lra X,$$
which clearly satisfies $\neg\neg f=f$. For each $\CQ$-category $\bbA$,
$$(\neg\bbA)(y,x)=\neg\bbA(x,y)$$
gives a $\CQ$-distributor $\neg\bbA:\bbA\oto\bbA$, and it is straightforward to check that
$$\{\neg\bbA:\bbA\oto\bbA\}_{\bbA\in\ob(\QDist)}$$
is a cyclic dualizing family of $\QDist$; this gives the ``only if'' part of the following proposition. As for the ``if'' part, just note that $\CQ$ can be fully faithfully embedded in $\QDist$:

\begin{prop} (See \cite{Rosenthal1992}.) \label{QDist_Girard}
A small quantaloid $\CQ$ is a Girard quantaloid if, and only if, $\QDist$ is a Girard quantaloid.
\end{prop}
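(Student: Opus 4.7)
The plan is to handle the two directions of the equivalence separately. For the ``only if'' direction, given a cyclic dualizing family $\{d_X\}_{X\in\ob\CQ}$ in $\CQ$, I will verify that the proposed family $\{\neg\bbA:\bbA\oto\bbA\}_{\bbA\in\ob(\QDist)}$, defined by $(\neg\bbA)(y,x):=\neg\bbA(x,y)=d_{tx}\lda\bbA(x,y)=\bbA(x,y)\rda d_{ty}$, is a cyclic dualizing family in $\QDist$. For the ``if'' direction, I will exploit the fully faithful quantaloid homomorphism $\CQ\hookrightarrow\QDist$ sending $X$ to the singleton $\CQ$-category $\star_X$ and $f:X\lra Y$ to the $\CQ$-distributor $f:\star_X\oto\star_Y$, and extract a cyclic dualizing family for $\CQ$ from a given one in $\QDist$.

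For the first direction I would first check that $\neg\bbA$ is actually a $\CQ$-distributor, which reduces to a routine manipulation using the $\CQ$-categorical axioms on $\bbA$ together with the adjointness of $\lda$, $\rda$. Next, to establish cyclicity, I would fix $\phi:\bbA\oto\bbB$ and compute both $\neg\bbA\lda\phi$ and $\phi\rda\neg\bbB$ pointwise. Since $\lda$ and $\rda$ in $\QDist$ decompose as pointwise meets, and since $\phi\circ\bbA=\phi=\bbB\circ\phi$, both expressions should collapse to the same formula $(y,x)\mapsto\neg\phi(x,y)$, via the standard quantaloid identities $(h\lda g)\lda f=h\lda(f\circ g)$ and $g\rda(h\rda k)=(h\circ g)\rda k$ together with the fact that $\lda$, $\rda$ convert joins into meets. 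Denote the common value by $\neg\phi:\bbB\oto\bbA$ (automatically a $\CQ$-distributor, since it arose as $\neg\bbA\lda\phi$). The dualizing identity $(\neg\bbA\lda\phi)\rda\neg\bbA=\phi$ will then follow cheaply: by cyclicity applied to $\neg\phi$ itself, the left side equals $\neg\bbB\lda\neg\phi$, which is pointwise $\neg\neg\phi(x,y)=\phi(x,y)$ by the dualizing property in $\CQ$; the symmetric identity $\neg\bbB\lda(\phi\rda\neg\bbB)=\phi$ is handled the same way.

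For the converse, the assignment $X\mapsto\star_X$, $(f:X\lra Y)\mapsto(f:\star_X\oto\star_Y)$ is a fully faithful homomorphism of quantaloids, and hence preserves and reflects composition, local joins, and (via their universal characterizations) also $\lda$ and $\rda$. Setting $d_X:=D_{\star_X}$, viewed as an element of $\CQ(X,X)$, the cyclic and dualizing identities required for $\{d_X\}$ in $\CQ$ would reduce, via this embedding, to the corresponding identities for $\{D_{\star_X}\}$ tested against $\CQ$-distributors of the form $f:\star_X\oto\star_Y$, which hold by hypothesis. The main obstacle I expect is the pointwise bookkeeping in the cyclicity step of the ``only if'' direction: one must carefully track types and directions of $\CQ$-arrows and invoke the calculus of $\lda$/$\rda$ at the right moments. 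Once that computation is set up, the rest of the argument is mechanical, since dualizing reduces to double-complementation and the entire construction is driven by the cyclic and dualizing identities already available in $\CQ$.
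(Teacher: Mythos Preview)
Your proposal is correct and follows essentially the same approach as the paper: for the ``only if'' direction you verify (with more detail than the paper, which simply declares it ``straightforward'') that $\{\neg\bbA\}_{\bbA\in\ob(\QDist)}$ is a cyclic dualizing family in $\QDist$, and for the ``if'' direction you use the full embedding $\CQ\hookrightarrow\QDist$ via $X\mapsto\star_X$, exactly as the paper does. Your pointwise computations for cyclicity and dualizing are the expected ones and go through as you describe.
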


Hence, with $\CQ$ being Girard, each $\CQ$-distributor $\phi:\bbA\oto\bbB$ has a \emph{complement}
$$\neg\phi:=\neg\bbA\lda\phi=\phi\rda\neg\bbB:\bbB\oto\bbA.$$

\begin{exmp}
\begin{enumerate}[label={\rm(\arabic*)}]
\item Every Girard quantale \cite{Rosenthal1990,Yetter1990} is a one-object Girard quantaloid.
\item $\Rel$ is a Girard quantaloid since it is a full subquantaloid of the Girard quantaloid ${\bf 2}\text{-}\Dist$, where ${\bf 2}\text{-}\Dist$ being Girard follows from Proposition \ref{QDist_Girard} and the fact that ${\bf 2}$ is a Girard quantale.
\item Each complete Boolean algebra $(L,\wedge,\vee,\neg,0,1)$ induces a Girard quantaloid $\DL$ \cite{Hohle2011,Pu2012,Walters1981} (i.e., the quantaloid of diagonals in the one-object quantaloid $L$) with the following data:
\begin{itemize}
\item objects in $\DL$ are the elements of $L$;
\item $\DL(X,Y)=\{f\in L:f\leq X\wedge Y\}$ with inherited order from $L$;
\item the composition of $\DL$-arrows $f\in\DL(X,Y)$, $g\in\DL(Y,Z)$ is given by $g\circ f=g\wedge f$;
\item the identity $\DL$-arrow in $\DL(X,X)$ is $X$ itself.
\end{itemize}
It is straightforward to check that $\{0:X\lra X\}_{X\in L}$ is a cyclic dualizing family in $\DL$.
\item Each quantaloid $\CQ$ is embedded in a Girard quantaloid $\CQ_G$ \cite{Shen2016a} with the following data:
\begin{itemize}
\item objects in $\CQ_G$ are the same as those in $\CQ$;
\item $\CQ_G(X,Y)=\CQ(X,Y)\times\CQ(Y,X)$ with
$$\bv_{i\in I}(f_i,f'_i)=\Big(\bv_{i\in I}f_i,\bw_{i\in I}f'_i\Big)$$
for all $\{(f_i,f'_i)\}_{i\in I}\subseteq\CQ_G(X,Y)$;
\item the composition of $\CQ_G$-arrows $(f,f'):X\lra Y$, $(g,g'):Y\lra Z$ is given by
$$(g,g')\circ(f,f')=(g\circ f,\ (f'\lda g)\wedge(f\rda g'));$$
\item the identity $\CQ_G$-arrow in $\CQ_G(X,X)$ is $(1_X,\top_{X,X}):X\lra X$.
\end{itemize}
\end{enumerate}
\end{exmp}

The most important property of Girard-quantaloid-enriched categories is that presheaf $\CQ$-categories are isomorphic to copresheaf $\CQ$-categories:

\begin{prop} \label{PA_PdA_iso}
If $\CQ$ is a small Girard quantaloid, then for any $\CQ$-category $\bbA$,
$$\neg:\PA\lra\PdA$$
is an isomorphism in $\QCat$.
\end{prop}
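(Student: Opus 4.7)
The plan is to show that the assignment $\mu\mapsto\neg\mu$ defines a type-preserving, fully faithful $\CQ$-functor $\PA\lra\PdA$ whose square is the identity, giving an isomorphism in $\QCat$.

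First, I would verify that $\neg$ sends $\PA$ to $\PdA$ preserving types: for $\mu:\bbA\oto\star_X$ in $\PA$ (of type $X$), the complement $\neg\mu=\neg\bbA\lda\mu=\mu\rda\neg\star_X$ is a $\CQ$-distributor $\star_X\oto\bbA$, i.e., a copresheaf on $\bbA$ of type $X$. So $\neg$ is a well-defined map $\PA_0\lra\PdA_0$ that respects types.

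The heart of the proof is the identity
\begin{equation*}
\PA(\mu,\mu')=\mu'\lda\mu=\neg\mu'\rda\neg\mu=\PdA(\neg\mu,\neg\mu')
\end{equation*}
for all $\mu,\mu'\in\PA$, which establishes both that $\neg$ is a $\CQ$-functor and that it is fully faithful. I would prove this as an instance of a general fact in any Girard quantaloid: for arrows $f:X\lra Y$ and $g:X\lra Z$, one has $g\lda f=\neg g\rda\neg f$. The argument is that $h\circ f\leq g$ if and only if $\neg g\circ h\circ f\leq d_X$ (composing both sides with $\neg g$ on the left and using $\neg g\circ g\leq d_X$ for one direction; for the other, note $h\circ f\leq\neg g\rda d_X=\neg\neg g=g$ by the dualizing property), and this in turn is equivalent to $\neg g\circ h\leq\neg f$. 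Hence both $g\lda f$ and $\neg g\rda\neg f$ are characterized as the greatest arrow $Y\lra Z$ satisfying the common inequality. Applying this principle inside the Girard quantaloid $\QDist$ (Proposition \ref{QDist_Girard}) to the pair $\mu,\mu':\bbA\oto\star_X,\star_{X'}$ yields the identity above.

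Finally, I would exhibit the inverse: define $\neg:\PdA\lra\PA$ dually by $\neg\lam=\lam\rda\neg\bbA=\neg\star_X\lda\lam$ for $\lam:\star_X\oto\bbA$; the same argument as above shows this is a $\CQ$-functor. The dualizing property $\neg\neg\phi=\phi$ that holds for every arrow in a Girard quantaloid, applied in $\QDist$, immediately gives $\neg\neg\mu=\mu$ for every $\mu\in\PA$ and $\neg\neg\lam=\lam$ for every $\lam\in\PdA$, so the two functors are mutually inverse and $\neg:\PA\lra\PdA$ is an isomorphism in $\QCat$. The only genuinely non-routine step is the hom-arrow identity, and it is a formal consequence of the cyclic dualizing axioms; the real content of the proposition is the observation that in the Girard setting, presheaves and copresheaves are interchangeable via complementation.
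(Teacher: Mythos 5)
Your proposal is correct and follows essentially the same route as the paper: both hinge on the hom-arrow identity $\mu'\lda\mu=\neg\mu'\rda\neg\mu$ obtained from the cyclic dualizing family of $\QDist$, which shows $\neg$ is fully faithful, after which the isomorphism is immediate (you exhibit the inverse $\lam\mapsto\lam\rda\neg\bbA$ explicitly, while the paper simply notes surjectivity). Your Galois-connection derivation of $g\lda f=\neg g\rda\neg f$ is a fine, slightly more hands-on substitute for the paper's direct rewrite $\lam\lda\mu=((\neg\bbA\lda\lam)\rda\neg\bbA)\lda\mu=(\neg\bbA\lda\lam)\rda(\neg\bbA\lda\mu)$.
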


\begin{proof}
Since $\{\neg\bbA\}_{\bbA\in\ob(\QDist)}$ is a cyclic dualizing family, one has
$$\PA(\mu,\lam)=\lam\lda\mu=((\neg\bbA\lda\lam)\rda\neg\bbA)\lda\mu=(\neg\bbA\lda\lam)\rda(\neg\bbA\lda\mu)=\PdA(\neg\mu,\neg\lam)$$
for all $\mu,\lam\in\PA$. Thus $\neg:\PA\lra\PdA$ is a fully faithful $\CQ$-functor, and consequently an isomorphism in $\QCat$ since it is obviously surjective.
\end{proof}

\begin{prop} \label{Girard_ccd=coccd}
If $\CQ$ is a small Girard quantaloid, then a $\CQ$-category is (ccd) if and only if it is {\opccd}.
\end{prop}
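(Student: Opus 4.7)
The plan is to use Proposition \ref{PA_PdA_iso} — the iso $\neg:\PA\cong\PdA$ in $\QCat$ — to realize $\bbA$ as a retract of the \opccd\ $\CQ$-category $\PdA$ in the 2-category $\QInf$ of complete $\CQ$-categories and inf-preserving $\CQ$-functors, and then invoke a $\QInf$-dual of Proposition \ref{retract_ccd}. By symmetry (swapping the roles of $\PA$ and $\PdA$ via $\neg$ throughout) it suffices to prove the implication (ccd) $\Rightarrow$ \opccd.

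Assume $\bbA$ is (ccd), with witnessing string $T\dv\sup\dv\sY:\bbA\to\PA$. Since $\neg\neg=\id$, the iso $\neg$ is simultaneously left and right adjoint to itself in $\QCat$, and composition with adjunctions preserves adjointness; hence post- and pre-composing with $\neg$ yields a derived string
$$\neg T\ \dv\ \sup\circ\neg\ \dv\ \neg\sY\ :\ \bbA\lra\PdA.$$
In particular the middle and rightmost functors $\sup\circ\neg:\PdA\to\bbA$ and $\neg\sY:\bbA\to\PdA$ are right adjoints in $\QCat$, so by Proposition \ref{la_sup_preserving}(ii) they are inf-preserving, i.e., 1-cells in $\QInf$. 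Their composite is $(\sup\circ\neg)\circ(\neg\sY)=\sup\circ\sY\cong 1_{\bbA}$, using $\neg\neg=\id$ and that the counit of $\sup\dv\sY$ is an iso because $\sY$ is fully faithful. Thus $\bbA$ is a retract of $\PdA$ in $\QInf$.

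Since $\PdA$ is \opccd\ (Example \ref{PdA_coccd}), the proof concludes by invoking the $\QInf$-dual of Proposition \ref{retract_ccd}: retracts in $\QInf$ of \opccd\ $\CQ$-categories are themselves \opccd. This dual is obtained by straightforward dualization of the proof of Proposition \ref{retract_ccd} — given a string $\sYd_{\bbC}\dv\inf_{\bbC}\dv S_{\bbC}$ witnessing \opccd\ of $\bbC$, and an inf-preserving retract $F:\bbA\to\bbC$, $G:\bbC\to\bbA$ with $GF\cong 1_{\bbA}$, the composite $G^{\nra}\circ S_{\bbC}\circ F$ serves as a right adjoint of $\inf_{\bbA}$, verified by the inf-preserving/right-adjoint counterpart of the calculation in Proposition \ref{retract_ccd} (which uses Proposition \ref{la_sup_preserving}(ii) in place of (iii)).

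The main obstacle is simply the bookkeeping for this dual of Proposition \ref{retract_ccd}; the rest of the argument is a direct application of the iso $\neg$ to convert the (ccd) string on $\PA$ into a retract diagram on $\PdA$. One should note that a naive identification such as $\sup\circ\neg=\inf_{\bbA}$ is \emph{false} in general — one can check this already for $\CQ=\mathbf{2}$ and a three-element chain — so the retract/dualization route is essential; the iso $\neg$ does not literally interchange the Yoneda and co-Yoneda embeddings, but it does move $\bbA$ from being a $\QSup$-retract of $\PA$ to being a $\QInf$-retract of $\PdA$, and that is exactly what the dual of Proposition \ref{retract_ccd} needs.
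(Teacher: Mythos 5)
Your proof is correct and follows essentially the same route as the paper: the paper likewise exhibits $\bbA$ (via $\sY_{\bbA}$ and $\sup_{\bbA}$, both right adjoints in $\QCat$) as a retract in $\QInf$ of a (co)presheaf category that is {\opccd} thanks to the isomorphism $\neg:\PA\cong\PdA$ of Proposition \ref{PA_PdA_iso}, and then invokes the dual of Proposition \ref{retract_ccd}. The only cosmetic differences are that you compose with $\neg$ so as to land the retraction directly in $\PdA$ and work with $\sup_{\bbA}\sY_{\bbA}\cong 1_{\bbA}$, whereas the paper keeps the retraction into $\PA$ (assuming $\bbA$ skeletal, so that $\sup_{\bbA}\sY_{\bbA}=1_{\bbA}$) and transports the {\opccd} property across the isomorphism.
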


\begin{proof}
Suppose that $\bbA$ is a skeletal (ccd) $\CQ$-category. Then $\bbA$ is a retract of $\PA$ in $\QInf$ since $\bbA\two/->`<-/^{\sY_{\bbA}}_{\sup_{\bbA}}\PA$ satisfies $\sup_{\bbA}\sY_{\bbA}=1_{\bbA}$ and both $\sup_{\bbA}$, $\sY_{\bbA}$ are right adjoints in $\QCat$. As $\PA\cong\PdA$ by Proposition \ref{PA_PdA_iso}, $\PA$ is {\opccd}; hence the dual of Proposition \ref{retract_ccd} implies that $\bbA$ is {\opccd}. This proves the ``only if'' part, and the ``if'' part is precisely the dual of the ``only if'' part.
\end{proof}

Therefore, as an immediate consequence of Theorems \ref{Kphi_ccd}, \ref{Kphi_coccd_phi_regular} and Proposition \ref{Girard_ccd=coccd}, one has the following generalized version of Theorem \ref{regular_cd_classical_ccd}:

\begin{thm} \label{regular_ccd_Girard}
If $\CQ$ is a small Girard quantaloid, then for any $\CQ$-distributor $\phi$, the following statements are equivalent:
\begin{enumerate}[label={\rm(\roman*)}]
\item $\phi$ is regular.
\item $\Kphi$ is (ccd).
\item $\Kphi$ is {\opccd}.
\end{enumerate}
\end{thm}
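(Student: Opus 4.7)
The plan is to close a three-step cycle of implications $\text{(i)}\Rightarrow\text{(ii)}\Rightarrow\text{(iii)}\Rightarrow\text{(i)}$, with each arrow invoking a result already established in the paper. In fact, no fresh argument is needed for any of the three implications, so the proof is a short assembly; the only substantive content is recognizing that the Girard hypothesis on $\CQ$ enters at exactly one step and is what allows the cycle to close.

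First I would dispose of the two implications that do not require $\CQ$ to be Girard. The implication $\text{(i)}\Rightarrow\text{(ii)}$ is immediate from Theorem \ref{Kphi_ccd}, which says that $\Kphi$ is (ccd) whenever $\phi$ is regular (and does so for an arbitrary small quantaloid $\CQ$). Dually, the implication $\text{(iii)}\Rightarrow\text{(i)}$ is exactly the statement of Theorem \ref{Kphi_coccd_phi_regular}, whose lengthy proof (the main technical work of the paper) has already been carried out without any Girard hypothesis.

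The final link $\text{(ii)}\Rightarrow\text{(iii)}$ is where the Girard assumption is used. By Proposition \ref{Girard_ccd=coccd}, whenever $\CQ$ is a small Girard quantaloid a $\CQ$-category is (ccd) if, and only if, it is \opccd; applying this to the complete $\CQ$-category $\Kphi$ yields the desired implication (and in fact gives $\text{(ii)}\Leftrightarrow\text{(iii)}$ on the nose, which is more than what the cycle requires). Concatenating the three implications, one obtains the full equivalence.

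The main ``obstacle,'' then, is purely organizational: one must verify that the Girard hypothesis is indeed \emph{only} needed to pass between (ccd) and \opccd, so that the asymmetric non-Girard results \ref{Kphi_ccd} and \ref{Kphi_coccd_phi_regular} can be used as black boxes. Since both of these results were stated and proved for an arbitrary small quantaloid $\CQ$, this verification is immediate, and no additional computation is called for.
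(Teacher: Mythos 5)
Your proposal is correct and matches the paper's own argument: the theorem is stated there as an immediate consequence of Theorems \ref{Kphi_ccd} and \ref{Kphi_coccd_phi_regular} together with Proposition \ref{Girard_ccd=coccd}, exactly the three-step cycle you assemble. The observation that the Girard hypothesis enters only in passing between (ccd) and {\opccd} is precisely how the paper organizes it.
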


\section{When $\CQ$ is a commutative integral quantale} \label{Q_quantale}

One may wonder whether $\CQ$ being Girard is essential for Theorem \ref{regular_ccd_Girard} to be true; that is, suppose that
$$\phi\ \text{is regular}\iff\Kphi\ \text{is (ccd)}\iff\Kphi\ \text{is {\opccd}}$$
for all $\CQ$-distributors $\phi$, is $\CQ$ necessarily a Girard quantaloid? Although it is difficult to answer this question for a general small quantaloid $\CQ$, we do have some partial results when $\CQ$ is a commutative integral quantale as the following Theorem \ref{ccd_coccd_Girard} shows. As a preparation, we explain the involved notions first.

An \emph{integral quantale} $(\CQ,\&)$ is a one-object quantaloid in which the unit $1$ of the underlying monoid $(\CQ,\&)$ is the top element of the complete lattice $\CQ$. It is moreover \emph{commutative} if $p\& q=q\& p$ for all $p,q\in\CQ$, and in this case we write
$$p\ra q:=q\lda p=p\rda q$$
for the adjoints induced by the monoid multiplication $\&$, which satisfies
$$p\& q\leq r\iff p\leq q\ra r$$
for all $p,q,r\in\CQ$. The operation $\ra$ makes $\CQ$ itself a $\CQ$-category, which may also be viewed as the presheaf $\CQ$-category of the singleton $\CQ$-category $\star$, i.e., $\CQ=\CP\star$.

A \emph{Girard quantale} \cite{Rosenthal1990,Yetter1990} is precisely a one-object Girard quantaloid. For a commutative integral quantale $(\CQ,\&)$, the commutativity ensures that every element of $\CQ$ is cyclic, and the integrality forces a dualizing element in $\CQ$, whenever it exists, to be the bottom element $\bot$ of $\CQ$. Hence, a commutative integral quantale $(\CQ,\&)$ is Girard if, and only if,
$$q=(q\ra\bot)\ra\bot$$
for all $q\in\CQ$.

\begin{exmp}
\begin{enumerate}[label={\rm(\arabic*)}]
\item Every frame is a commutative integral quantale, and Girard frames are precisely complete Boolean algebras.
\item Every complete BL-algebra \cite{Hajek1998} is a commutative integral quantale, and it is Girard if and only if it is an MV-algebra \cite{Chang1958}. In particular, the unit interval $[0,1]$ equipped with a continuous t-norm \cite{Klement2000} is a commutative integral quantale, and it becomes a Girard quantale if and only if it is isomorphic to $[0,1]$ equipped with the {\L}ukasiewicz t-norm.
\item Lawvere's quantale $\CQ=([0,\infty]^{\op},+)$ (see Example \ref{K_not_full}) is commutative and integral, but it is not Girard.
\end{enumerate}
\end{exmp}

\begin{thm} \label{ccd_coccd_Girard}
\footnote{The authors are indebted to Professor Dexue Zhang for helpful discussions on this theorem.}
Let $(\CQ,\&)$ be a commutative integral quantale. Then the following statements are equivalent:
\begin{enumerate}[label={\rm(\roman*)}]
\item $\bot$ is a dualizing element, hence $\CQ$ is a Girard quantale.
\item A $\CQ$-category is (ccd) if and only if it is {\opccd}.
\item For any $\CQ$-distributor $\phi$, $\Kphi$ is (ccd) if and only if it is {\opccd}.
\item For any $\CQ$-distributor $\phi$, $\phi$ is regular if and only if $\Kphi$ is {\opccd}.
\item $\CQ$ is an {\opccd} $\CQ$-category.
\end{enumerate}
\end{thm}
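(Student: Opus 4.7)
The plan is to close the cycle (i)$\Rightarrow$(ii)$\Rightarrow$(iii)$\Rightarrow$(iv)$\Rightarrow$(v)$\Rightarrow$(i), with four of these steps being short repackagings of previous results and one step carrying all the content. For (i)$\Rightarrow$(ii) I apply Proposition~\ref{Girard_ccd=coccd} directly to the one-object Girard quantaloid $(\CQ,\&)$. For (ii)$\Rightarrow$(iii) I restrict the (ccd)/{\opccd} equivalence to the subfamily of $\CQ$-categories of the form $\Kphi$. For (iii)$\Rightarrow$(iv) I combine Theorem~\ref{Kphi_ccd} (regular implies (ccd)) with Theorem~\ref{Kphi_coccd_phi_regular} ({\opccd} implies regular), using (iii) as the bridge between (ccd) and {\opccd} for $\Kphi$. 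For (iv)$\Rightarrow$(v) I apply (iv) to the identity $\CQ$-distributor on the singleton $\CQ$-category $\star$, which is obviously regular and whose $\CK$ is $\CP\star=\CQ$ by Example~\ref{KA}.

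The real content lies in (v)$\Rightarrow$(i), where the plan is to extract the dualizing identity $\neg\neg f=f$ from a single application of tensor preservation. Assuming $\CQ$ is an {\opccd} $\CQ$-category, $\inf_{\CQ}:\CPd\CQ\lra\CQ$ is a left adjoint in $\QCat$, so by Proposition~\ref{la_sup_preserving}(iii) it preserves tensors: $\inf_{\CQ}(f\otimes_{\CPd\CQ}\lam)=f\otimes_{\CQ}\inf_{\CQ}\lam$ for every $\lam\in\CPd\CQ$ and every $f\in\CP(t\lam)$. I test this identity on the constant copresheaf $\lam_0\in\CPd\CQ$ defined by $\lam_0(y)=\bot$ for all $y\in\CQ$; the copresheaf axiom is trivial, and the infimum computes to $\bw_y(\bot\ra y)=1$. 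For arbitrary $f\in\CQ$, Proposition~\ref{PA_PdA_sup}(2) together with the commutativity of $\CQ$ identifies $f\otimes_{\CPd\CQ}\lam_0=\lam_0\lda f$ with the constant copresheaf of value $f\ra\bot=\neg f$, whose infimum is $\bw_y(\neg f\ra y)=\neg f\ra\bot=\neg\neg f$. On the $\CQ$-side, $f\otimes_{\CQ}1=f$. Substituting into tensor preservation yields $\neg\neg f=f$, and since $f\in\CQ$ was arbitrary, $\bot$ is a dualizing element and $\CQ$ is a Girard quantale.

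The one delicate moment I foresee is choosing the right copresheaf to test against tensor preservation in the step (v)$\Rightarrow$(i); once one tries the constant $\bot$ copresheaf $\lam_0$, the dualizing identity drops out after a couple of lines of residuation bookkeeping. The guiding observation is that $\neg\neg f$ naturally appears as the infimum of the constant copresheaf with value $\neg f$, which is exactly the tensor $f\otimes_{\CPd\CQ}\lam_0$; this is what makes $\lam_0$ the obvious candidate.
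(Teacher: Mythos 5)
Your proposal is correct and follows essentially the same route as the paper: the same cycle (i)$\Rightarrow$(ii)$\Rightarrow$(iii)$\Rightarrow$(iv)$\Rightarrow$(v)$\Rightarrow$(i), with the first four steps quoting Proposition~\ref{Girard_ccd=coccd}, Theorems~\ref{Kphi_ccd} and~\ref{Kphi_coccd_phi_regular}, and Example~\ref{KA}. Your (v)$\Rightarrow$(i) argument is exactly the paper's computation as well: the paper also applies tensor preservation of the left adjoint $\inf_{\CQ}$ (Propositions~\ref{la_sup_preserving} and~\ref{PA_PdA_sup}) to the bottom copresheaf $\lam(p)=\bot$, obtaining $q=(q\ra\bot)\ra\bot$.
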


\begin{proof}
(i)${}\Lra{}$(ii): Proposition \ref{Girard_ccd=coccd}.

(ii)${}\Lra{}$(iii): Trivial.

(iii)${}\Lra{}$(iv): Theorems \ref{Kphi_ccd} and \ref{Kphi_coccd_phi_regular}.

(iv)${}\Lra{}$(v): Since the identity $\CQ$-distributor on the singleton $\CQ$-category $\star$ is regular, from Example \ref{KA} one sees that $\CQ=\CP\star=\CK\star$ is {\opccd}.

(v)${}\Lra{}$(i): Since $\CQ$ is {\opccd}, $\inf_{\CQ}:\CPd\CQ\lra\CQ$ is a left adjoint in $\QCat$. Let $\lam$ be the bottom element in $\QDist(\star,\CQ)$, i.e., $\lam(p)=\bot$ for all $p\in\CQ$. Then for any $q\in\CQ=\CP\star$,
\begin{align*}
q&=q\& 1\\
&=q\otimes_{\CQ} 1&(\text{Proposition \ref{PA_PdA_sup}(1)})\\
&=q\otimes_{\CQ}\bw_{p\in\CQ}\lam(p)\ra p&(\lam(p)=\bot)\\
&=q\otimes_{\CQ}{\inf}_{\CQ}\lam&(\text{Proposition \ref{PA_PdA_sup}(4)})\\
&={\inf}_{\CQ}(q\otimes_{\CPd\CQ}\lam)&(\text{Proposition \ref{la_sup_preserving}(iii)})\\
&={\inf}_{\CQ}(q\ra\lam)&(\text{Proposition \ref{PA_PdA_sup}(2)})\\
&=\bw_{p\in\CQ}(q\ra\lam(p))\ra p&(\text{Proposition \ref{PA_PdA_sup}(4)})\\
&=\bw_{p\in\CQ}(q\ra\bot)\ra p&(\lam(p)=\bot)\\
&=(q\ra\bot)\ra\bot,
\end{align*}
which shows that $\bot$ is a dualizing element, completing the proof.
\end{proof}

\section{Concluding remarks}

Let $\phi$ be a $\CQ$-distributor. Consider the implications labelled in the following diagram:
$$\bfig
\morphism|m|/=>/<1000,0>[\Kphi\ \text{is {\opccd}}`\phi\ \text{is regular};\tc{1}]
\morphism(1000,0)|m|/=>/<950,0>[\phi\ \text{is regular}`\Kphi\ \text{is (ccd)};\tc{2}]
\morphism(1950,0)|a|/{@{=>}@/^-2em/}/<-1950,0>[\Kphi\ \text{is (ccd)}`\Kphi\ \text{is {\opccd}};\tc{3}]
\morphism(1000,0)|b|/{@{=>}@/^2em/}/<-1000,0>[\phi\ \text{is regular}`\Kphi\ \text{is {\opccd}};\tc{4}]
\morphism(1950,0)|b|/{@{=>}@/^2em/}/<-950,0>[\Kphi\ \text{is (ccd)}`\phi\ \text{is regular};\tc{5}]
\efig$$
\begin{itemize}
\item For any small quantaloid $\CQ$, \tc{1} and \tc{2} are always true (Theorems \ref{Kphi_ccd} and \ref{Kphi_coccd_phi_regular}).
\item If $\CQ$ is a Girard quantaloid, then \tc{1}-\tc{5} are all true (Theorem \ref{regular_ccd_Girard}).
\item When $\CQ$ is a commutative integral quantale, either \tc{3} or \tc{4} is true for all $\phi$ if, and only if, $\CQ$ is a Girard quantale (Theorem \ref{ccd_coccd_Girard}).
\end{itemize}

We end this paper with the following questions:
\begin{itemize}
\item When $\CQ$ is a commutative integral quantale, what is the necessary and sufficient condition for \tc{5} to be true for all $\phi$? We do not know the answer even in this special case.
\item For a general small quantaloid $\CQ$, what is the necessary and sufficient condition for any (or all) of \tc{3}, \tc{4}, \tc{5} to be true for all $\phi$?
\end{itemize}


\end{document}